\newmdenv[linecolor=green!50!black, fontcolor=green!50!black, backgroundcolor=green!20, linewidth=2pt, roundcorner=10pt]{gnote}
\definecolor{amber}{rgb}{1.0, 0.49, 0.0}
\definecolor{darkpastelgreen}{rgb}{0.01, 0.75, 0.24}
\definecolor{darkred}{rgb}{0.64, 0.0, 0.0}
\definecolor{amberhl}{rgb}{1.0, 0.75, 0.0}
\newcommand{\red}[1]{\textcolor{black}{#1}}
\newcommand{\rededit}[1]{\textcolor{black}{#1}}
\newmdenv[linecolor=blue!50!black, fontcolor=blue!50!black, backgroundcolor=blue!20, linewidth=2pt, roundcorner=10pt]{anote}
\newcolumntype{L}{>{$}l<{$}}
\newcommand{\bb}{{\bm b}}
\newcommand{\br}{{\bm r}}
\newcommand{\bu}{{\bm u}}
\newcommand{\bv}{{\bm v}}
\newcommand{\bx}{{\bm x}}
\newcommand{\by}{{\bm y}}
\newcommand{\bfz}{{\bm 0}}
\newcommand{\cB}{\mathcal{B}}
\newcommand{\cN}{\mathcal{N}}
\newcommand{\cT}{\mathcal{T}}
\newcommand{\secref}[1]{Section~\ref{#1}}
\title{Stability Analysis of Inline ZFP Compression for Floating-Point Data in Iterative Methods\thanks{Submitted to the editors \red{DATE}
		\funding{This work was performed under the auspices 
    of the U.S. Department of Energy by Lawrence Livermore National
    Laboratory under Contract DE-AC52-07NA27344 and was supported by the
    LLNL-LDRD Program under Project No. 17-SI-004, LLNL-JRNL-769679-DRAFT.}}}
\author{ Alyson Fox\thanks{Lawrence Livermore National Laboratory,
		Livermore, CA (\email{fox33@llnl.gov})}
	\and James Diffenderfer\thanks{The University of Florida,
    Gainesville, FL (\email{jdiffen1@ufl.edu})}
\and Jeffrey Hittinger\thanks{Lawrence Livermore National Laboratory,
    Livermore, CA (\email{hittinger1@llnl.gov})}
\and Geoffrey Sanders\thanks{Lawrence Livermore National Laboratory,
    Livermore, CA (\email{sanders29@llnl.gov})} 
\and Peter Lindstrom\thanks{Lawrence Livermore National Laboratory,
    Livermore, CA (\email{lindstrom2@llnl.gov})}}
\begin{document}

\maketitle

\begin{abstract}
  Currently, the dominating constraint in many high performance computing
  applications is data capacity and bandwidth, in both inter-node
  communications and even more-so in on-node data motion.  A new
  approach to address this limitation is to make use of data
  compression in the form of a compressed data array.  Storing data in
  a compressed data array and converting to standard IEEE-754 types as
  needed during a computation can reduce the pressure on bandwidth and
  storage.  However, repeated conversions (lossy compression and
  decompression) introduce additional approximation errors, which need
  to be shown to not significantly affect the simulation results.
  We extend recent work~\cite{errorzfp} that analyzed the error of a
  single use of compression and decompression of the ZFP compressed data
  array representation~\cite{zfp,zfp-doc} to the case of time-stepping
  and iterative 
  schemes, where an advancement operator is repeatedly applied in addition to the
  conversions. \red{We show that the accumulated error for iterative methods involving fixed-point
  and time evolving iterations is bounded under 
  standard constraints.}  An upper bound is established on the number of
  additional iterations required for the convergence of stationary
  fixed-point iterations.  An additional analysis of traditional forward and backward error of stationary iterative methods using ZFP compressed
  arrays is also presented.  The results of several 1D, 2D, and 3D
  test problems are provided to demonstrate the correctness of the
  theoretical bounds.    
\end{abstract}
\begin{keywords}
  Lossy compression, data-type conversion, floating-point representation, error bounds, iterative methods, ZFP
\end{keywords}

\begin{AMS}
  65G30, 65G50, 68P30
\end{AMS}

\section {Introduction}
\label{sec:introduction}
Applications in scientific computing often result in extremely large
volumes of floating-point data, e.g, a simulation of turbulent fluid
dynamics can produce on the order of 6 TB per timestep~\cite{fluidsim}.
Thus, a significant order cost is often the off-node and on-node data
motion~\cite{Xcutting2010,Williams:2009:RIV:1498765.1498785}.  Lossless
and lossy compression algorithms have been considered to reduce the
amount of data, thus reducing time in data transfer.
Compression algorithms are most frequently considered for I/O operations (data and restart  
files) and in-memory storage of static data (e.g., tabulated material
properties), however there is potential for significant performance
gains by using compressed data types within a simulation.  In this approach, 
the solution state data could be stored in a compressed format, be
decompressed, be operated on, and be recompressed inline during each
time step or iteration of a numerical simulation.  The trends in
computer hardware are that processing power (the number of FLOPS) is
growing much more rapidly than memory and network bandwidth or memory
capacity.  Thus, compressed data types would make more efficient use of
bandwidth and storage.  

The seemingly preferred choice would be a compressed data type based on
lossless compression algorithms, 
such as FPC~\cite{Ratanaworabhan:2006:FLC:1126009.1126035},
SPDP~\cite{8416606}, BLOSC~\cite{Blosc},  
FPZIP~\cite{fpzip} and other variants, as they reproduce the original
data with no degradation.  Unfortunately, lossless compression
algorithms struggle to produce significant
compression ratios for floating-point
data~\cite{Ratanaworabhan:2006:FLC:1126009.1126035,fpzip}.
Lossy floating-point compression, e.g., SZ~\cite{sz,sz17,sz18} and
ZFP~\cite{zfp}, on the other hand, only guarantees an approximation of
the original data but produces a much higher ratio 
of data reduction than lossless compression.
Since the solution state from a
numerical simulation already contains a host of numerical approximation
errors (e.g., floating-point round-off, truncation error, and
iteration error), one can legitimately question the logic of requiring
a lossless compressed data type.  Data lost in lossy compression
may not be meaningful, i.e., it represents errors, and a lossy
compressed data type can be thought of as just another approximate
representation of real numbers with a finite number of bits that
introduces error that must be controlled. 

The use of lossy compression in numerical simulation has been suggested
before: it has been considered for checkpointing numerical
simulations~\cite{calhoun_exploring_2018} and for inline compression of
the solution state in~\cite{Laney}. In both cases, it was demonstrated 
that lossy compression can be used without causing significant changes to
important physical quantities.  However, neither application provided
theoretical backing to ensure numerical stability, that is, to control
the accumulated error resulting from repeated compression and
decompression. In~\cite{tao_improving_2018}, the authors 
investigated the convergence of iterative methods when only one or
possibly two checkpoints are used with lossy compression.

In this paper, we will consider the use of a lossy compressed data type
in place of the standard IEEE-754 \cite{IEEE754} floating point representation and analyze
the impact of the associated compression error on two common
applications in numerical simulation. 
Specifically, we consider the ZFP compressed data array, which was first
described in~\cite{zfp} and modified in~\cite{zfp-doc}, as a lossy compression
data type that individually compresses and decompresses small blocks
of $4^d$ values from $d$-dimensional data.
Due to the locality of the independently compressed blocks, ZFP
compressed data arrays are ideal for storing simulation data, since only
the block containing a particular data value needs to be uncompressed,
which gives ZFP arrays a behavior similar to standard random access
arrays.
In prior work~\cite{errorzfp}, extensive error analysis
for a single cycle of ZFP compression and decompression was performed
for all three of ZFP's encoding modes (fixed-rate, fixed-accuracy, or
fixed-precision).
Here, we extend these results to the use of ZFP compressed data arrays
under the action of an \rededit{{\it advancement operator}, an operator that advances the numerical solution whether it be for an iterative or time-stepping numerical method.}  Specifically, we consider the situation
where data is stored in a ZFP compressed array, converted (decompressed) to IEEE
double precision data types as needed, updated arithmetically under the
action of the advancement operator, and then converted (compressed) back
to the ZFP representation.  This cycle will be repeated numerous times
in a time-stepping or iterative algorithm.

\rededit{Without loss of generality, we assume that the advancement operator is bounded, a common assumption for a wide class of problems. Either we assume the advancement operator is Lipschitz continuous, which is useful for both linear and nonlinear operators, or it is a Kreiss bounded linear operator (see Section \ref{sec:bounds} for more details). In either case, without a bounded advancement operator, the numerical method may be unstable resulting in poor approximations to the true solution. Under these assumptions,} our results determine an upper bound on the accumulation of error introduced by the use of ZFP compressed arrays. Furthermore, our results
allow the formulation of criteria for the parameters of ZFP that will
ensure that the compression error remains below a user-defined tolerance.
We also establish an upper bound on the number of additional 
iterations required to achieve convergence of a stationary iterative
method when ZFP compression error accumulates. \rededit{We also extend our error bounds for successive displacement fixed-point operators that are block diagonally dominant.} Finally, we provide a
forward and backward error analysis for stationary iterative methods
using ZFP compressed arrays and compare the accumulated floating-point
round-off error to the accumulated compression error caused by ZFP.

The remainder of this paper is structured as follows.
In~\secref{sec:notation}, we will summarize the required
notation and bounds established
in~\cite{errorzfp}. In~\secref{sec:bounds}, we provide the main results, 
which are upper bounds for the error introduced by using ZFP compressed
arrays under repeated application of an advancement operator. Following the main
results, we demonstrate the validity of the error bounds numerically
in~\secref{sec:results}.

\section{Preliminary Notation, Definitions, and Theorems}
\label{sec:notation}
We first outline the ZFP compression algorithm as documented in \cite{zfp-doc} and provide the required notation and preliminary theorems, as defined in \cite{errorzfp}, that are necessary for this paper.

 Consider a $d$-dimensional scalar array. This array is partitioned into {\it blocks} of $4^d$ scalars. Each block is then compressed independently by the following steps. The floating-point values, typically represented by IEEE, from each block are converted to a block-floating-point representation \cite{MitrablockRoundingError} using a common exponent then shifted and rounded into two's complement signed integers. A near orthogonal decorrelating transform, similar to the discrete cosine transform, is applied to the $4^d$ block. 
The idea is that the decorrelating transform removes redundancies in the data by a change in basis. From the change of basis, the magnitude of the coefficients tend to correlate with the location within the block, thus, ZFP reorders the coefficients by total sequency \cite{zfp-doc}. A 2-$d$ example can be seen in Figure \ref{fig:totalsequency}. 
\begin{figure}
	\centering
	\includegraphics[width=.5\textwidth]{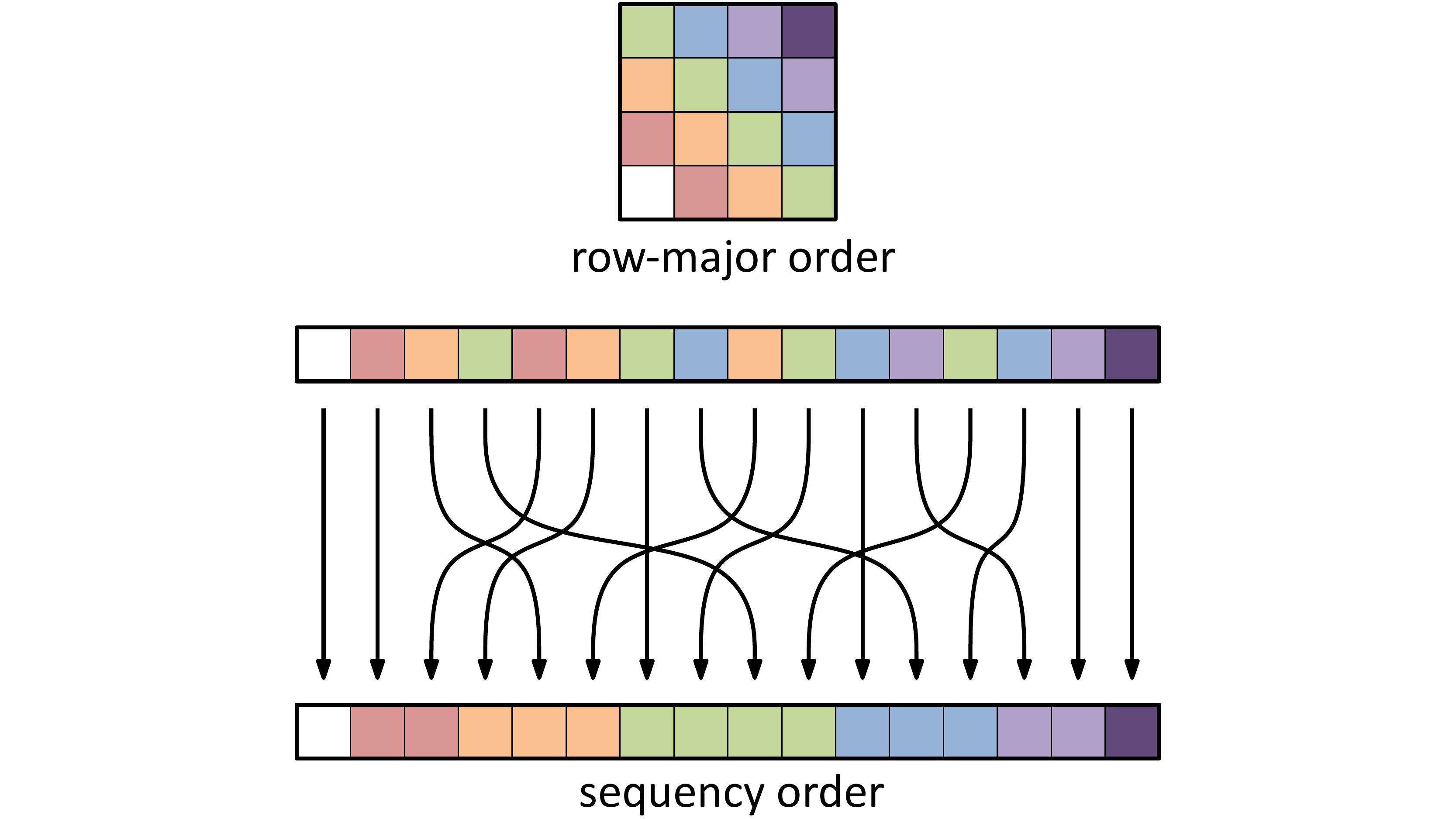}
	\caption{Total sequency ordering for a 2-dimensional array, which groups the diagonal elements together. }
	\label{fig:totalsequency}
\end{figure}
As the sign-bit in two's complement does not provide any useful information without the leading one-bit, ZFP converts each integer into a negabinary representation \cite{Knuth} where the leading one-bit encodes both the sign and approximate magnitude of the value. The block is then transposed so that it is ordered by bit-plane instead of by coefficient, from most to least significant bit; see Figure \ref{fig:bit-plane}.
\begin{figure}
	\centering
	\includegraphics[width=.5\textwidth]{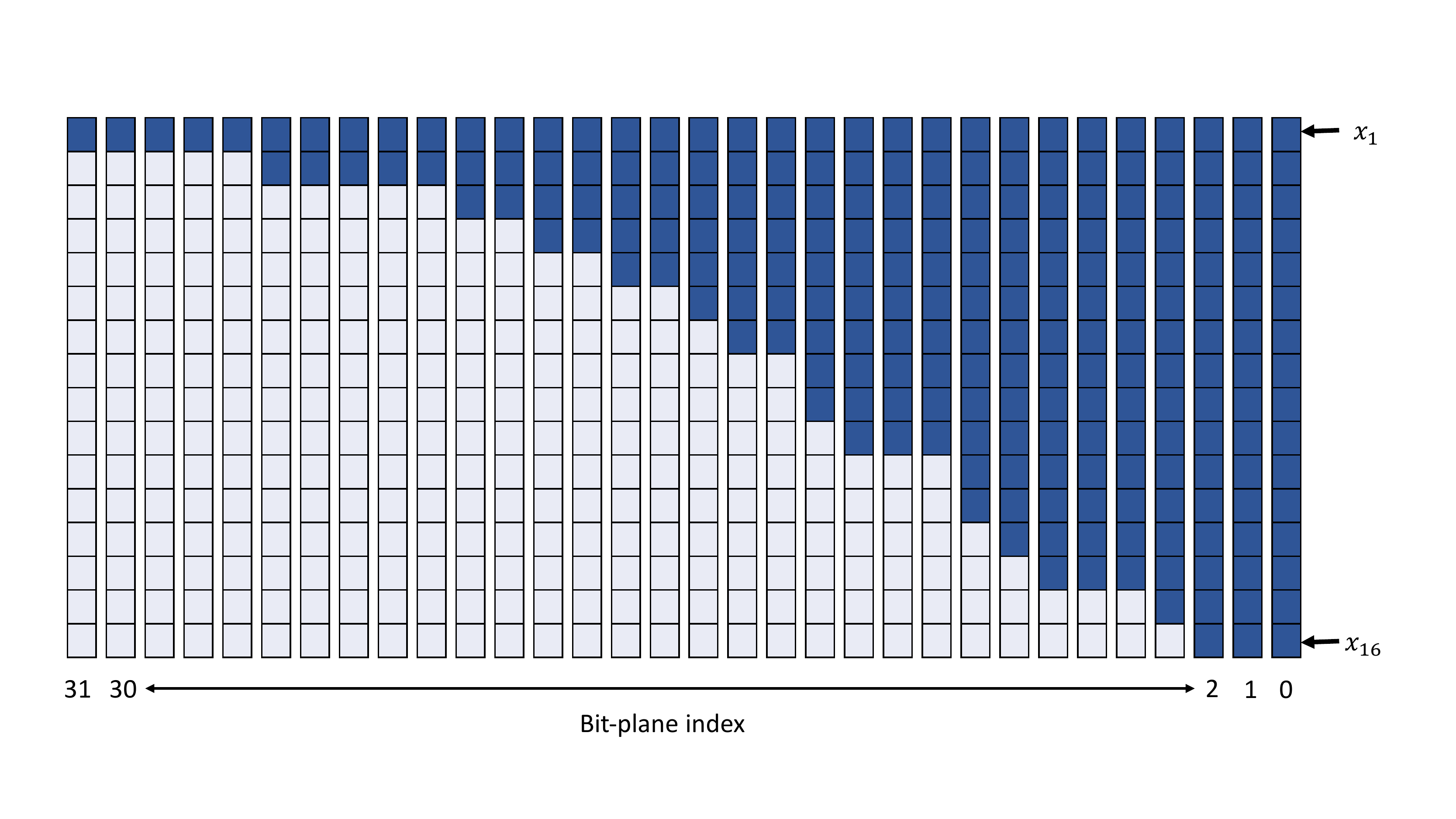}
	\caption{A 2-dimensional array represented with a 32-bit negabinary integer representation ordered by bit-plane, from most to least significant bit. Typically, the bits containing the information are concentrated in the higher transform coefficients as illustrated by the dark blue coloring (referred to as the energy compaction property \cite{Rao1990}), where the light blue represents 0 and the dark blue represents either 1 or 0.  }
	\label{fig:bit-plane}
\end{figure}
Each bit-plane is then losslessly compressed using an embedded coding scheme \cite{zfp} which emits one bit at a time until some stopping criterion is satisfied. The stopping criterion for ZFP has three modes: fixed-precision, fixed-rate, and fixed-accuracy. For more detailed information on ZFP see \cite{zfp} and \cite{zfp-doc}. 

The bounds presented in this paper are only for the fixed-precision mode, which retains a fixed number of bit-planes; however, the theorems can be extended to other modes using Theorem 5.3 and Theorem 5.4 in \cite{errorzfp}. Each block of values can then be converted back to the source format (e.g., IEEE) by applying a decompression operator. Before discussing the round-off error bound between the compression of IEEE values to ZFP established by \cite{errorzfp}, we summarize a key vector space used in the analysis in \cite{errorzfp}.

\subsection{Signed Binary and Negabinary Bit-Vector Spaces}
In \cite{errorzfp}, a vector space was introduced (which we will refer to as the \emph{infinite bit vector space}) in order to express each step of the ZFP compression algorithm as an operator on binary or negabinary representations  \cite{Knuth} of $4^d$ vectors. The components of each vector in this vector space are infinite sequences of zeros and ones that are restricted in a manner such that each real number has a unique representation as a binary sequence. For example, given $x \in \mathbb{R}$, there exist $c, d \in \mathbb{B}^{\infty}$ and $s \in \mathbb{B}$ such that $x$ can be represented in signed binary and negabinary as 
\begin{align}
{\text{Signed Binary: }} x = (-1)^s \sum_{i = - \infty}^{\infty} c_i 2^i \ \ \ \text{ and}  \ \ \ {\text{ Negabinary: }} x = \sum_{i = - \infty}^{\infty} d_i (-2)^i, \label{CrepX}
\end{align}
where $\mathbb{B} = \{0,1\}$. Placing certain restrictions on the choice of $c$, $d$, and $s$ such that each $x$ has a unique representation in the form described in (\ref{CrepX}), we are able to form the infinite bit vector spaces for signed binary and negabinary representations, which we denote by $\mathcal{B}$ and $\mathcal{N}$, respectively.  To imitate floating-point representations, \cite{errorzfp} defines subspaces $\cB_k$ and $\cN_k$ of $\mathcal{B}$ and $\mathcal{N}$, respectively, where $k$ represents the maximum number of nonzero bits allotted for each representation, excluding the sign bit in the signed binary representation. As $\cB_k$ and $\cN_k$ are subsets but \emph{not} subspaces of $\cB$ and $\cN$, all the analysis in \cite{errorzfp} took place in $\cB$, $\cN$, or $\mathbb{R}$, using operators to imitate working with a fixed number of bits. For the full definition and underlying concepts of the infinite bit vector space, see Section 3 in \cite{errorzfp}.

\subsection{Single Use Error Bound}
An upper bound on the round-off error of a single use of ZFP compression from IEEE values was established in \cite{errorzfp} by constructing an operator for each step of ZFP, that acts on elements of an infinite bit vector space and by composing these operators to make a ZFP compression and decompression operator. Notation introduced in \cite{errorzfp} will be used in this paper and will be summarized here with respect to each step of ZFP (de)compression. 

First, assume the $d$-dimensional data has been partitioned into arrays of dimension $4^d$, which we refer to as \emph{blocks}. Let a block  $\bx \in \mathbb{R}^{4^d}$ be given. For some precision $k \in \mathbb{N}$, assume every element in $\bx$ can be represented with at most $k$-consecutive bits, including the leading one bit. For frequently used IEEE single and double precision floating-point types, $k \in \{24, 53\}$, \rededit{i.e., the number of mantissa bits}. Each component in $\bx$ is then converted to a block floating-point representation with respect to a common exponent. Note, that error may occur when converting the block into signed integers and is taken into account in \cite{errorzfp}. Let $q \in \mathbb{N}$ represent the maximum number of nonzero consecutive bits that can be used to represent each component in the block floating-point representation. In the ZFP implementation, $q = k+e-2$, where $e \in \mathbb{N}$ is the maximum number of bits that represent the exponent in IEEE single or double format, i.e., $e  \in \{8,11\}$ and $q \in \{30, 62\}$. Define, $e_{max, \mathcal{B}}(\bm{x})$ and $e_{max, \mathcal{N}}(\bm{x})$ as the index of the leading nonzero bit in each corresponding infinite bit vector space (see Section 3.4 in \cite{errorzfp}). \rededit{From Lemma 3.6 in \cite{errorzfp}, we have $\|\bx\|_\infty \geq 2^{e_{max, \mathcal{B}}(\bm{x})}$, a useful relation between the magnitude of $\|\bx\|_\infty$ and $e_{max, \mathcal{B}}(\bm{x})$. }

 Once the block is converted to signed integers represented in two's complement, the block is acted on by a linear
transformation. In $d$-dimensions, the transform operator, $\cT$, is
applied to each dimension separately, and the operator can be
represented as a Kronecker product. For $A \in \mathbb{R}^{n_1,m_1}$ and
$B \in \mathbb{R}^{n_2,m_2}$, the Kronecker product is defined as  
\begin{align*}
A\otimes B =\begin{bmatrix}a_{1,1}B &a_{1,2} B & \cdots a_{1,m_1}B \\  \vdots & \ddots& \vdots  \\ 
a_{n_1,1}B &a_{n_1,2} B & \cdots a_{n_1,m_1}B \end{bmatrix}. 
\end{align*}
Then the total forward transform operator used in of ZFP is defined as 
\begin{align*}
\cT_d = \underbrace{\cT\otimes \cT \otimes \cdots \otimes \cT}_\text{$(d - 1)$-products},
\end{align*}
where $\cT \in \mathbb{R}^{4\times 4}$ is defined by
\begin{align}
\cT = \frac{1}{16} \begin{bmatrix}
\begin{array}{rrrr}
4 & 4 & 4 & 4 \\
5 & 1 & -1 & -5 \\
-4 & 4 & 4 & -4 \\
-2 & 6 & -6 & 2
\end{array}
\end{bmatrix} \quad \text{and} \quad \cT^{-1}= \frac{1}{4} \begin{bmatrix}
\begin{array}{rrrr}
4 & 6 & -4 & -1 \\
4 & 2 & 4 & 5 \\
4 & -2 & 4 & -5 \\
4 & -6 & -4 & 1
\end{array}
\end{bmatrix}. 
\end{align}
The transformation
could result in typical fixed-point round-off. Thus, the operator used in the
implementation of the algorithm \cite{zfp-doc} will be defined as $\tilde{\cT}_d$.  From \cite{errorzfp}, we have the associated lemma bounding the round-off error. 
\begin{lemma}[Lemma 4.4 \cite{errorzfp}]
	\label{lemma:boundT}
	Suppose $\bx \in \mathbb{Z}^{4^d}$, such that $e_{max, \mathcal{B}}(\bx) = q \in \mathbb{N}$ and $\bx \neq \bfz$. Then 
	\begin{align}
	\|\cT_d\bx - \tilde{\cT}_d \bx \|_\infty & \leq k_\cT\epsilon_q \|\bx\|_\infty, 
	\end{align}
	where $k_\cT = \frac{7}{4} \left( 2^d - 1 \right)$ and $\epsilon_q = 2^{1-q}$. 
\end{lemma}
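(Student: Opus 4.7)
The plan is to prove the bound by induction on the spatial dimension $d$.

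For the base case $d = 1$, the fixed-point implementation $\tilde{\cT}$ is realized through a short sequence of integer lifting steps used in the ZFP implementation, each of which is exact except for a single truncation that introduces at most a unit of round-off at the current fixed-point precision. By enumerating those lifting steps and summing their round-off contributions via the triangle inequality, one should recover
\[
\|\cT\bx - \tilde{\cT}\bx\|_\infty \;\leq\; \tfrac{7}{4}\,\epsilon_q\,\|\bx\|_\infty,
\]
where the coefficient $7/4$ is determined by the weights of the individual lifting-step truncations in the decomposition used for $\cT$.

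For the inductive step from $d$ to $d+1$, I would exploit the separable structure $\cT_{d+1} = \cT_d^{(2,\ldots,d+1)} \circ \cT^{(1)}$ (and analogously for $\tilde{\cT}_{d+1}$), where the superscripts indicate the axes along which each transform acts. Adding and subtracting $\cT_d^{(2,\ldots,d+1)} \tilde{\cT}^{(1)}\bx$ gives the telescoping identity
\[
\tilde{\cT}_{d+1}\bx - \cT_{d+1}\bx \;=\; \bigl(\tilde{\cT}_d - \cT_d\bigr)^{(2,\ldots,d+1)}\tilde{\cT}^{(1)}\bx \;+\; \cT_d^{(2,\ldots,d+1)}\bigl(\tilde{\cT}^{(1)} - \cT^{(1)}\bigr)\bx.
\]
Applying the inductive hypothesis to the first summand (on the argument $\tilde{\cT}^{(1)}\bx$, whose leading-bit index $e_{\max,\mathcal{B}}$ may grow by at most one compared to that of $\bx$, producing an amplification of at most $2$) and the $d=1$ bound together with $\|\cT_d\|_\infty = 1$ to the second, I should obtain the recurrence $k_\cT^{(d+1)} \leq 2\,k_\cT^{(d)} + \tfrac{7}{4}$, which with $k_\cT^{(1)} = 7/4$ closes to $k_\cT^{(d)} = \tfrac{7}{4}(2^d - 1)$ by a standard geometric-series argument.

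The main obstacle I expect is the careful treatment of how $e_{\max,\mathcal{B}}$ and $\|\cdot\|_\infty$ evolve under one axis-pass of $\tilde{\cT}$; specifically, showing that the product $\epsilon_{q'}\,\|\tilde{\cT}^{(1)}\bx\|_\infty$ is at most $2\,\epsilon_q\,\|\bx\|_\infty$, where $q'$ is the new leading-bit index. This conversion relies on Lemma~3.6 of \cite{errorzfp} to relate the bit-vector notion of precision to ordinary magnitudes, and it is exactly the mechanism that forces the accumulated constant to grow as $2^d - 1$ rather than only linearly in $d$.
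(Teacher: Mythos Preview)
The paper does not actually prove this lemma; it is quoted verbatim from \cite{errorzfp} (as Lemma~4.4 there) and used only as a preliminary ingredient, so there is no in-paper argument to compare your proposal against.

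That said, your inductive strategy is sound in outline, and the recurrence $k_\cT^{(d+1)} \le 2\,k_\cT^{(d)} + \tfrac{7}{4}$ does close to $k_\cT^{(d)} = \tfrac{7}{4}(2^d-1)$. The factor-of-two amplification you flag as the main obstacle is in fact justified by the two-sided relation $2^{q'} \le \|\by\|_\infty < 2^{q'+1}$ for $\by = \tilde{\cT}^{(1)}\bx$ (Lemma~3.6 in \cite{errorzfp}), which combined with the corresponding inequality for $\bx$ gives $\epsilon_{q'}\|\by\|_\infty < 4 \le 2\,\epsilon_q\|\bx\|_\infty$ regardless of whether the one-dimensional pass shrinks or grows the leading-bit index. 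The part of your sketch that genuinely cannot be completed from the present paper is the base case: the specific constant $7/4$ comes from enumerating the truncations in the particular lifting-step factorization of $\cT$ used in the ZFP implementation, and those details live only in \cite{errorzfp}.
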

After the forward transformation, each component in the block is converted losslessly from a two's complement representation to a negabinary representation. That is, a $p$-bit two's complement integer requires $p+1$ bits in negabinary; however, as ZFP no longer needs the guard bit that was used for the transformation, there is an extra bit of precision for the negabinary representation. Once each component of the block is converted to a negabinary representation, a deterministic permutation acts on the components of the block. The final compression step is then the bit stream truncation. For the fixed-precision mode, the fixed-precision parameter, denoted $\beta \geq 0$, is provided by the user to ZFP. Here, $\beta$ represents the number of most significant bit planes to keep and any discarded bit plane is mathematically equivalent to replacing the bits with all-zero bits. Note that the encoding step is omitted from the discussion as it is a lossless mapping. For the decompression operator,  each block is decompressed by applying the inverse of each compression step in reverse order. It was shown in \cite{errorzfp} that, if $\beta\leq q-2d+2$, then no additional loss will occur in the decompression steps with the exception of potential round-off error due the conversion back to the original source format (see Section 4.2 in \cite{errorzfp}). 

For $r \in \mathbb{Z}$, we define the constant $\epsilon_r = 2^{1-r}$. Let the nonlinear operators $C: \mathbb{R}^{4^d} \rightarrow \mathcal{N}^{4^d}$ and $D: \mathcal{N}^{4^d} \rightarrow \mathbb{R}^{4^d}$ denote the ZFP compression and decompression operators, as defined in \cite{errorzfp}. We can now note the following point-wise error bound resulting from compressing and then decompressing a $4^d$ block using ZFP: 

\begin{theorem}[Theorem 5.2 \cite{errorzfp}]\label{thm:diffDCandDC} 
	Assume $\bx \in \mathbb{R}^{4^d}$ with $\bx \neq 0$ such that for some $k$ and $e$, each element in $\bx$ is representable by $k$ mantissa bits and $e$ exponent bits as defined in \cite{errorzfp}. Let $0\leq \beta \leq  q- 2d+2$ be the fixed-precision parameter. Then 
	\begin{align}
		\| {D}{C} (\bx) - \bx \|_\infty &\leq K_\beta \|\bx\|_\infty,
	\end{align}
	where $q \in \mathbb{N}$ is the precision for the block-floating point representation,
	\begin{align}
		K_\beta := \left( \frac{15}{4} \right)^d \left(  (1+\epsilon_k)\left( \frac{8}{3} \epsilon_\beta+ \epsilon_q \left(1+ \frac{8}{3} \epsilon_\beta \right) \left(k_\cT(1+\epsilon_q)+1 \right )\right)+\epsilon_k \right), \label{eqn:kbeta}
	\end{align}
	and the constant $k_\cT = \frac{7}{4}(2^d-1)$.
\end{theorem}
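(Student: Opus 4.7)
The plan is to decompose both $C$ and $D$ into their constituent step-operators—IEEE-to-block-floating-point conversion $S$, forward transform $\tilde{\cT}_d$, two's-complement-to-negabinary conversion $N$, deterministic permutation $P$, and bit-plane truncation $T_\beta$—then telescope the point-wise error through the resulting sequence of intermediate vectors. Writing $C = T_\beta \circ P \circ N \circ \tilde{\cT}_d \circ S$, the decompressor $D$ is the formal composition of inverses in reverse order; since $P$ and $N$ are lossless and invertible under the hypothesis $\beta \leq q - 2d + 2$ (which guarantees no information is lost in the negabinary representation), the three nontrivial error sources on the round trip are exactly (i) the IEEE conversions at entry and exit, (ii) the forward and inverse transform round-offs, and (iii) the bit-plane truncation at level $\beta$.

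First I would bound (i) by standard floating-point rounding, contributing an additive $\epsilon_k \|\bx\|_\infty$ on entry and a multiplicative $(1+\epsilon_k)$ factor on exit; the additive $\epsilon_k$ and the leading $(1+\epsilon_k)$ in~\eqref{eqn:kbeta} originate here. For (ii), Lemma~\ref{lemma:boundT} gives $\|\cT_d \by - \tilde{\cT}_d \by\|_\infty \leq k_\cT \epsilon_q \|\by\|_\infty$ directly; an analogous statement for $\tilde{\cT}_d^{-1}$, composed with the forward round-off across the round trip, produces the nested factor $k_\cT(1+\epsilon_q)+1$ visible in~\eqref{eqn:kbeta}. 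For (iii), I would analyze the worst-case tail of a negabinary expansion below bit plane $e_{\max,\cN}-\beta$: since each bit is in $\{0,1\}$ and the signs of $(-2)^i$ alternate, splitting the sum into even- and odd-index contributions and summing the two geometric series of ratio $1/4$ gives a tail bounded by $\tfrac{4}{3}\cdot 2^{e_{\max,\cN}-\beta}$. Combining this with the observation $e_{\max,\cN}(\by) \leq e_{\max,\mathcal{B}}(\by)+2$ (two's-complement negative values can require up to two extra negabinary bits) and Lemma~3.6 of~\cite{errorzfp} ($2^{e_{\max,\mathcal{B}}(\by)}\leq\|\by\|_\infty$) yields exactly $\tfrac{8}{3}\epsilon_\beta\|\by\|_\infty$, the source of the $\tfrac{8}{3}\epsilon_\beta$ terms inside $K_\beta$.

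Next I would push the accumulated post-truncation error back through $\tilde{\cT}_d^{-1}$ using the operator-norm estimate $\|\cT^{-1}\|_\infty = 15/4$, which is immediate from direct inspection of the row sums of $\cT^{-1}$. Because $\cT_d^{-1} = \cT^{-1}\otimes\cdots\otimes\cT^{-1}$ and the infinity-norm operator norm is multiplicative under Kronecker products, $\|\cT_d^{-1}\|_\infty = (15/4)^d$, which supplies the leading $(15/4)^d$ factor. Multiplying the three error contributions, amplifying the post-transform truncation error by $(15/4)^d$, and closing with the final $(1+\epsilon_k)$ from the IEEE conversion yields the constant $K_\beta$ after collecting like terms.

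The main obstacle is the bookkeeping: every intermediate norm must be controlled by $\|\bx\|_\infty$ rather than by the perturbed vector that produced it, so at each step one replaces $\|\cdot\|_\infty$ of an intermediate by $(1+\text{small})\|\bx\|_\infty$ before accumulating. The subtlest point is aligning the two anchoring quantities used throughout—the leading bit position $e_{\max,\mathcal{B}}$ (or $e_{\max,\cN}$) of an intermediate vector, versus the $\|\cdot\|_\infty$ norm used to express the transform and conversion bounds—so that the constants combine into exactly the form in~\eqref{eqn:kbeta}. Loose bookkeeping at the $e_{\max,\cN}\leftrightarrow e_{\max,\mathcal{B}}$ step, or failure to absorb intermediate perturbations into $(1+\text{small})\|\bx\|_\infty$ factors at each composition, would cost a spurious factor of two somewhere in the chain.
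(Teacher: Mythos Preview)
The paper does not prove this statement at all: Theorem~\ref{thm:diffDCandDC} is quoted verbatim as Theorem~5.2 of~\cite{errorzfp}, and the surrounding text in Section~\ref{sec:notation} only \emph{summarizes} the operator-by-operator structure of that proof (block-floating-point conversion, transform with Lemma~\ref{lemma:boundT}, lossless negabinary and permutation steps, bit-plane truncation, then reversal). Your proposal is therefore not being compared against a proof in this paper but against a citation.

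That said, your sketch tracks the summary the paper gives quite faithfully: the decomposition $C = T_\beta \circ P \circ N \circ \tilde{\cT}_d \circ S$ matches the paper's description of the steps, the identification of the three lossy stages is correct, the use of Lemma~\ref{lemma:boundT} for the transform error and of $\|\cT^{-1}\|_\infty = 15/4$ (hence $(15/4)^d$ via the Kronecker structure) is exactly what the paper signals, and your negabinary-tail argument lands on the $\tfrac{8}{3}\epsilon_\beta$ constant appearing in~\eqref{eqn:kbeta}. One minor discrepancy worth flagging: the paper states that a $p$-bit two's-complement integer requires $p+1$ negabinary bits (not $p+2$), so your ``$e_{\max,\cN}\leq e_{\max,\cB}+2$'' claim may be absorbing an extra alignment shift that in~\cite{errorzfp} is accounted for elsewhere; the arithmetic still reproduces $\tfrac{8}{3}\epsilon_\beta$, but if you were writing this out in full you would want to reconcile precisely where that factor of two enters. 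Beyond that, the honest answer is that the fine bookkeeping you correctly flag as the main obstacle---tracking $e_{\max}$ versus $\|\cdot\|_\infty$ through each step and absorbing intermediate perturbations into $(1+\text{small})\|\bx\|_\infty$ factors---lives entirely in~\cite{errorzfp}, and neither this paper nor your proposal actually carries it out.
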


Theorem \ref{thm:diffDCandDC} can easily be extended to any $d$-dimensional array of floating point values by taking the largest error from all blocks. Let $\hat{C}$ and $\hat{D}$ represent the  compression and decompression operator for whole $d$-dimensional array, then 
\begin{align}
\| \hat{D}\hat{C}(\bx) - \bx \|_\infty = \max_i \|{D}{C}(\bx_i) - \bx_i\|_\infty,
\end{align}
where each $\bx_i$ represents a non-overlapping $4^d$ block. When it is clear \emph{from context} that the entire data field is being (de)compressed, the $\hat{D}$, $\hat{C}$ notation will be dropped. Now that the required notation and theorems have been presented, we proceed with our discussion of error bounds for inline use of ZFP compressed arrays.

\section{Error Bounds for Inline use of ZFP Conversion}
\label{sec:bounds}
\begin{definition} \label{def:lip}An operator $g(\cdot): \mathbb{R}^n \rightarrow\mathbb{R}^n$ is {\bf{Lipschitz continuous}} with constant $L_l<\infty$ if 
		\begin{align}
			\|g(\bx) - g(\by)\|_\infty \leq L_l \|\bx - \by\|_\infty,  \qquad \text{for all} \quad  \bx, \by \in \mathbb{R}^n.
				\end{align}
\end{definition}
\begin{definition} \label{def:kreiss}A linear operator $A \in \mathbb{R}^{n \times n}$ is {\bf{Kreiss bounded}} \cite{kreiss} with constant $L_k<\infty$ if 
	\begin{align}	\|A^i\|_2 \leq L_k,   \qquad \text{for all} \ i \in \mathbb{N}.	\end{align}  
\end{definition}

\subsection{Lipschitz Continuous Advancement Operator}
Suppose that we generate a sequence $\{ \by_t \}_{t = 0}^{\infty}$ by
\begin{align}
\by_{t+1} = g(\by_t), \label{FPscheme}
\end{align} 
where the advancement operator, $g: \mathbb{R}^n \to \mathbb{R}^n $, and initial value, $\by_0 \in \mathbb{R}^n$, are given. Additionally, consider the sequence $\{ \bx_{t} \}_{t = 0}^{\infty}$ generated by 
\begin{align}
\bx_{t+1} = g ( {D} {C}( \bx_{t})), \label{ZFPscheme}
\end{align} 
where the initial point of the sequence is $\bx_0 = \by_0$. \red{In applications, the solution would be stored in the compressed state, $C(\bx_t)$, however, the solution must be converted back to IEEE in order to analyze the error. We chose to consider the error after the application of the advancement operator, however, we could have instead considered the sequence $\bx_{t+1} = {D} {C}(g ( \bx_{t}))$ and obtained similar results.} The scheme given by (\ref{FPscheme}) can either be viewed as a fixed-point or a time-stepping method, depending on the properties of $g(\cdot)$. For example, $g(\cdot)$ could represent a finite difference method of a PDE or a Jacobi iteration method. First, we will  determine a bound on the value of $\| \bx_{t+1} - \by_{t+1} \|_{\infty}$ for various properties of the advancement operator. \red{Then, we will investigate specifically the convergence for fixed-point iterative methods; we will determine the convergence properties for which the sequence $\{ \bx_{t} \}_{t = 0}^{\infty}$ converges to the same fixed-point as the sequence $\{ \by_{t} \}_{t = 0}^{\infty}$. Additionally, }we will determine how many extra iterations, $m$, are needed for the sequence $\{ \bx_{t} \}_{t = 0}^{T+m}$ to converge to the same accuracy as $\{ \by_{t} \}_{t = 0}^{T}$ with respect to the fixed-point, given $T$ iterations. 

%
%
%

Theorem \ref{thm:lip} provides a bound for the difference between the $j$-th element of the sequences $\{ \by_{t} \}_{t= 0}^{j}$ and $\{ \bx_{t} \}_{t = 0}^{j}$ for a general Lipschitz continuous advancement operator by using the single use error bound from Theorem \ref{thm:diffDCandDC}. Since the only assumption on the advancement operator $g(\cdot)$ is Lipschitz continuity, Theorem \ref{thm:lip} is not restricted to linear functions and is useful for applications involving
Lipschitz continuous nonlinear functions.
\begin{theorem}[Lipschitz Continuous Advancement Operator] \label{thm:lip}
Suppose that $g(\cdot)$ is Lipschitz continuous with Lipschitz constant $L_l$. Then
\begin{align}\label{eqn:lipbound}
\| \bx_{t+1} - \by_{t+1} \|_{\infty} \leq \sum_{j = 0}^{t} L_l^{t-j+1} K_{\beta_j} \|\bx_j\|_\infty ,
\end{align}
where $\beta_j$ is the number of bit planes kept at step $j$.
\end{theorem}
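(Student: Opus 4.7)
The plan is to prove this by induction on $t$, repeatedly invoking the Lipschitz property of $g(\cdot)$ together with the single-step compression bound from Theorem \ref{thm:diffDCandDC}.

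For the base case $t=0$, note that $\bx_0 = \by_0$, so
\begin{align*}
\|\bx_1 - \by_1\|_\infty = \|g(DC(\bx_0)) - g(\by_0)\|_\infty \leq L_l \|DC(\bx_0) - \bx_0\|_\infty \leq L_l K_{\beta_0} \|\bx_0\|_\infty,
\end{align*}
which matches the right-hand side of \eqref{eqn:lipbound} when the summation contains only the $j=0$ term.

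For the inductive step, suppose the bound holds at iteration $t-1$. I would begin with
\begin{align*}
\|\bx_{t+1} - \by_{t+1}\|_\infty = \|g(DC(\bx_t)) - g(\by_t)\|_\infty \leq L_l \|DC(\bx_t) - \by_t\|_\infty,
\end{align*}
and then insert $\pm \bx_t$ inside the norm. The triangle inequality decouples the current compression error from the accumulated error of previous iterations:
\begin{align*}
L_l \|DC(\bx_t) - \by_t\|_\infty \leq L_l \|DC(\bx_t) - \bx_t\|_\infty + L_l \|\bx_t - \by_t\|_\infty.
\end{align*}
The first term is bounded by $L_l K_{\beta_t} \|\bx_t\|_\infty$ via Theorem \ref{thm:diffDCandDC}, while the second is handled by the inductive hypothesis, contributing $L_l \sum_{j=0}^{t-1} L_l^{t-j} K_{\beta_j} \|\bx_j\|_\infty = \sum_{j=0}^{t-1} L_l^{t-j+1} K_{\beta_j} \|\bx_j\|_\infty$. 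Combining these with the $j=t$ term of the sum (whose exponent is $L_l^{t-t+1} = L_l$) gives exactly \eqref{eqn:lipbound}.

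There is no real obstacle here; the proof is a routine induction. The only subtlety worth flagging is the choice to split the error as $DC(\bx_t) - \by_t = (DC(\bx_t)-\bx_t) + (\bx_t - \by_t)$, so that the compression error at step $t$ is measured against $\|\bx_t\|_\infty$ (the actual input to compression at that step) rather than against $\|\by_t\|_\infty$; this is what produces the factor $\|\bx_j\|_\infty$ appearing in each summand. One could alternatively phrase an analogous bound using $\|\by_j\|_\infty$ plus a correction term, but the stated form emerges most cleanly from the inductive argument as written.
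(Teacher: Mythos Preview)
Your proof is correct and follows essentially the same approach as the paper: both establish the recursive inequality $\|\bx_{t+1}-\by_{t+1}\|_\infty \le L_l\big(K_{\beta_t}\|\bx_t\|_\infty + \|\bx_t-\by_t\|_\infty\big)$ via Lipschitz continuity, the triangle inequality with the splitting $DC(\bx_t)-\by_t = (DC(\bx_t)-\bx_t)+(\bx_t-\by_t)$, and Theorem~\ref{thm:diffDCandDC}, then unroll it using $\bx_0=\by_0$. The only difference is presentational: you spell out the induction explicitly, whereas the paper simply states the recursion and remarks that the result follows.
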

\begin{proof}
Since $g(\cdot)$ is Lipschitz continuous it follows that
	\begin{align}
\nonumber	\| \bx_{t+1} - \by_{t+1} \|_{\infty} &= \| g ( {D} {C} (\bx_{t})) - g(\by_t) \|_\infty, \\
	& \leq L_l \| {D} {C} (\bx_{t}) - \by_t \|_\infty \leq L_l \left( \| {D} {C} (\bx_{t}) - \bx_t \|_\infty +\|\bx_t -\by_t \|_\infty\right), 
	\end{align}
for all $t \geq 0$. Applying Theorem \ref{thm:diffDCandDC} yields the inequality 
\begin{align}
\| \bx_{t+1} - \by_{t+1} \|_{\infty} &\leq L_l \left( K_{\beta_t} \| \bx_{t} \|_\infty +\|\bx_t -\by_t \|_\infty\right). \label{5.3eq1}
\end{align}
Since $\bx_0 = \by_0$ and (\ref{5.3eq1}) holds for all $t \geq 0$, the
desired result now follows. 
\end{proof}
It should be noted that if the advancement operator given by $g(\cdot)$ has a Lipschitz constant less than one, then (\ref{eqn:lipbound}) is a convergent sum. Otherwise, even if the advancement operator is stable (e.g., finite difference equations of hyperbolic PDE's), the terms $L_l^{t-j+1}$ in (\ref{eqn:lipbound}) will grow exponentially and will no longer provide a meaningful bound. 

Simulations are fundamentally about approximation. For example, to solve a PDE one must use a finite difference or finite element method, which produces truncation error. Truncation error is typically the dominating error, and if the order of magnitude of the error is known, then one could find an approximate $\beta$ value for all iterations that will ensure the truncation error remains the dominating error instead of the error caused by the repeated application of ZFP compression. Here we present a result that provides a lower bound on the choice of $\beta$ to ensure \red{that truncation error, or any other type of error resulting from the simulation,} remains the dominant cost when using ZFP compressed data-types.

\begin{lemma}
	\label{lemma:truncerror}
	Suppose that $g(\cdot)$ is Lipschitz continuous with Lipschitz constant $L_l<1$. \red{Assume the sequence $\{\bx_t\}_{t=0}^\infty$ defined by (\ref{ZFPscheme}) is bounded. Assume $\epsilon \geq \frac{K_q\gamma}{1-L_l}$, where $q$ is the number of bits used in the negabinary representation in ZFP and $\gamma = \displaystyle \max_{0\leq j \leq t} {\|\bx_j\|_\infty}$. Then }
	\begin{align}\label{eqn:lipbound2}
	\| \bx_{t+1} - \by_{t+1} \|_{\infty} \leq \epsilon
	\end{align}
	if 
	\begin{align}
\tilde{\beta} \geq 1- \log_2\left(  \left(\frac{4}{15}\right)^d \frac{3\epsilon(1-L_l)}{8\gamma (2^d+1) L_l(1-L_l^{t+2}) }  \right)
	\end{align}
	and  $\tilde{\beta} \geq\beta_j$ for $1 \leq j \leq t$, where $\beta_j$ is the number of bit planes kept at step $j$. 
\end{lemma}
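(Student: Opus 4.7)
The plan is to invoke Theorem \ref{thm:lip} and simplify the resulting sum using the uniform bound $\gamma$, the monotonicity of $K_\beta$ in $\beta$, and the hypothesis that absorbs the ``irreducible'' ZFP error through $K_q$.

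First, I would apply Theorem \ref{thm:lip} directly to obtain
\begin{align*}
\|\bx_{t+1} - \by_{t+1}\|_\infty \leq \sum_{j=0}^{t} L_l^{t-j+1} K_{\beta_j} \|\bx_j\|_\infty .
\end{align*}
By definition of $\gamma$ we have $\|\bx_j\|_\infty \leq \gamma$, and since $K_\beta$ is monotonically decreasing in $\beta$ (because $\epsilon_\beta = 2^{1-\beta}$ decreases in $\beta$), the hypothesis relating $\tilde{\beta}$ to each $\beta_j$ allows the replacement $K_{\beta_j} \leq K_{\tilde{\beta}}$. Factoring these constants out and evaluating the finite geometric series,
\begin{align*}
\sum_{j=0}^{t} L_l^{t-j+1} = \frac{L_l(1 - L_l^{t+1})}{1 - L_l} \leq \frac{L_l(1 - L_l^{t+2})}{1 - L_l},
\end{align*}
where the last inequality uses $L_l \in (0,1)$ to introduce the slightly weaker $(t+2)$ exponent matching the statement. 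This gives
\begin{align*}
\|\bx_{t+1} - \by_{t+1}\|_\infty \leq K_{\tilde{\beta}} \gamma \cdot \frac{L_l(1-L_l^{t+2})}{1 - L_l}.
\end{align*}

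Second, I would decompose $K_{\tilde{\beta}} = A(\tilde{\beta}) + B$ using definition (\ref{eqn:kbeta}), with $A(\tilde{\beta})$ collecting every summand containing $\epsilon_{\tilde{\beta}}$ and $B$ being the $\tilde{\beta}$-independent remainder. Since $A(q) \geq 0$ and $K_q = A(q) + B$, we have $B \leq K_q$, and the hypothesis $\epsilon \geq K_q \gamma / (1-L_l)$ controls the $B$ contribution. For the $A(\tilde{\beta})$ piece, using $\epsilon_k, \epsilon_q \ll 1$ and $k_\cT = \frac{7}{4}(2^d-1)$, the coefficient of $\epsilon_{\tilde{\beta}}$ can be bounded by $\left(\frac{15}{4}\right)^d \frac{8(2^d+1)}{3}$, yielding the sufficient condition
\begin{align*}
\left(\frac{15}{4}\right)^d \frac{8(2^d+1)}{3} \epsilon_{\tilde{\beta}} \cdot \gamma \cdot \frac{L_l(1-L_l^{t+2})}{1-L_l} \leq \epsilon.
\end{align*}
Finally, I would rearrange this for $\epsilon_{\tilde{\beta}} = 2^{1-\tilde{\beta}}$ and take $\log_2$ of both sides, which produces exactly the stated lower bound on $\tilde{\beta}$.

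The main obstacle is the constant-tracking needed to distill the explicit factor $\frac{8(2^d+1)}{3}$ in the bound on $A(\tilde{\beta})$: the naive separation of $K_{\tilde{\beta}}$ does not produce this factor directly, and combining the tight bound on $A$ with the coarse absorption $B \leq K_q$ must be done carefully so the two pieces add to $\epsilon$ rather than to a multiple of it. I expect this bookkeeping to be the delicate step; the reduction via Theorem \ref{thm:lip} and the geometric-series evaluation are otherwise routine.
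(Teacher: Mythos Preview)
Your overall route is exactly the paper's: invoke Theorem~\ref{thm:lip}, replace each $\|\bx_j\|_\infty$ by $\gamma$ and each $K_{\beta_j}$ by a single $K_{\max}$, sum the geometric series (the paper also silently weakens the exponent from $t+1$ to $t+2$), and then invert for $\tilde\beta$.

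The one place you diverge is in how the constant $K_{\tilde\beta}$ is handled. The paper does not carry out your decomposition $K_{\tilde\beta}=A(\tilde\beta)+B$; it simply writes, at the outset, ``by dropping the lower order terms, we can approximate $K_{\beta_j}\approx\left(\frac{15}{4}\right)^d\frac{8}{3}(2^d+1)\epsilon_{\beta_j}$'' and proceeds with that surrogate, never explicitly calling on the hypothesis $\epsilon\geq K_q\gamma/(1-L_l)$ inside the argument. Your plan to split off the $\beta$-independent remainder $B$ and absorb it via that hypothesis is the rigorous reading of what the paper's ``$\approx$'' and its standing assumption on $\epsilon$ are meant to accomplish. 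So the bookkeeping you flag as the delicate step is genuine, but be aware that the paper itself sidesteps it with an approximation rather than a clean inequality; your version is the more careful of the two.
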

\begin{proof}
	\red{By dropping the lower order terms, we can approximate $K_{\beta_j }$ with respect to $\beta_j$ as $K_{\beta_j } \approx  \left(\frac{15}{4}\right)^d \frac{8}{3} (2^d+1)\epsilon_{\beta_j}$.}  Let $\gamma =\displaystyle \max_{0\leq j \leq t} {\|\bx_j\|_\infty}$ and $K_{max} =\displaystyle \max_j{K_{\beta_j}}$.  From Equation \ref{eqn:lipbound}, we have   
\begin{align}
\nonumber \| \bx_{t+1} - \by_{t+1} \|_{\infty} &\leq \gamma K_{max} \sum_{j = 0}^t L_l^{t-j+1}, \\
&\leq \gamma \left(\frac{15}{4}\right)^d \frac{8}{3} (2^d+1) \epsilon_{\beta'} \frac{L_l(1-L_l^{t+2})}{1-L_l}, 
\end{align} where $\beta' \leq \min_j \beta_j$. Let $\tilde{\beta}$ be the minimal $\beta'$ such that Equation \ref{eqn:lipbound2} is true. Then Equation \ref{eqn:lipbound2} implies 
\begin{align} \gamma\left(\frac{15}{4}\right)^d \frac{8}{3} (2^d+1) \epsilon_{\tilde{\beta}}\frac{L_l(1-L_l^{t+2})}{1-L_l} \leq \epsilon\end{align}
and the desired result follows. 
\end{proof}	

It is also of interest to consider convergence properties of the fixed-point iteration defined by (\ref{ZFPscheme}). As the operator $g \circ D \circ C : \mathbb{R}^n \to \mathbb{R}^n$ may not be differentiable or even continuous, we cannot directly apply the \red{fixed-point convergence theorem.} However, we are able to use the error bound established in Theorem \ref{thm:diffDCandDC} for the compression and decompression operators to prove a similar \red{convergence result when ZFP is used as a data-type.}


\begin{theorem} \label{ZFPFixedPointThm}
		Let $B$ be a compact subset of $\mathbb{R}^n$. Suppose that  $g(\cdot)$ is Lipschitz continuous on B with constant $L_l<1$. 
		 Furthermore, if $L_l \in \left(0,\frac{1}{1+\tilde{K}_\beta}\right)$, where $\tilde{K}_{\beta} =\displaystyle{ \max_{0\leq j\leq t}{K_{\beta_j}}}$, then 
	\begin{align}
	\lim_{t \to \infty}\| \bm{x}_t- \by^*  \| \leq \frac{L_l \tilde{K}_{\beta} \| \by^*  \|}{1 - \theta}, 
	\end{align}
	where $ \theta := L_l(1 + K_{\tilde{\beta}})$ and $\by^* $ is the unique fixed-point of $g(\cdot)$ in $B$.
\end{theorem}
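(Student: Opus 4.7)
The plan is to mimic the standard Banach fixed-point argument but with an additive perturbation that accounts for the ZFP round-off, then show that the contraction hypothesis $L_l < 1/(1+\tilde{K}_\beta)$ is exactly what allows the perturbed iteration to contract. First, since $g$ is Lipschitz on the compact set $B$ with constant $L_l<1$, the classical Banach fixed-point theorem guarantees the existence and uniqueness of a fixed point $\by^\ast \in B$ with $g(\by^\ast)=\by^\ast$, so that object in the statement is well-defined and we have a clean anchor point to measure against.

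The core step is to derive a one-step recursive inequality of the form
\begin{equation*}
\|\bx_{t+1}-\by^\ast\|_\infty \;\leq\; \theta\,\|\bx_t-\by^\ast\|_\infty \;+\; L_l\,\tilde{K}_\beta\,\|\by^\ast\|_\infty.
\end{equation*}
To get this, I write $\bx_{t+1}-\by^\ast = g(DC(\bx_t)) - g(\by^\ast)$, apply Lipschitz continuity to pull out $L_l$, insert $\pm\bx_t$ inside the norm, and bound $\|DC(\bx_t)-\bx_t\|_\infty \leq K_{\beta_t}\|\bx_t\|_\infty \leq \tilde{K}_\beta\|\bx_t\|_\infty$ using Theorem~\ref{thm:diffDCandDC} together with the definition of $\tilde{K}_\beta$ as the maximum over the sequence. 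A second triangle inequality $\|\bx_t\|_\infty \leq \|\bx_t-\by^\ast\|_\infty + \|\by^\ast\|_\infty$ converts the $\|\bx_t\|_\infty$ on the right-hand side into a term in $\|\bx_t-\by^\ast\|_\infty$ (whose coefficient combines with the other contribution to give $\theta = L_l(1+\tilde{K}_\beta)$) plus the advertised additive noise term $L_l\tilde{K}_\beta\|\by^\ast\|_\infty$.

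The hypothesis $L_l \in (0,\,1/(1+\tilde{K}_\beta))$ is then used precisely to guarantee $\theta < 1$, so the recursion is a genuine contraction with additive forcing. Unrolling it yields
\begin{equation*}
\|\bx_t - \by^\ast\|_\infty \;\leq\; \theta^{\,t}\,\|\bx_0-\by^\ast\|_\infty \;+\; L_l\,\tilde{K}_\beta\,\|\by^\ast\|_\infty \sum_{j=0}^{t-1}\theta^{\,j},
\end{equation*}
and letting $t\to\infty$ collapses the first term to zero and the geometric series to $1/(1-\theta)$, producing exactly the claimed limit.

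The main obstacle I expect is technical rather than conceptual: the Lipschitz inequality only applies when both arguments of $g$ live in $B$, but after the decompression step $DC(\bx_t)$ might in principle leave $B$, and similarly the iterates $\bx_t$ themselves must stay in $B$ for the bookkeeping above to be valid. This is addressed by observing that the recursion above is self-bounding: once $\theta<1$, the quantity $\|\bx_t-\by^\ast\|_\infty$ stays bounded by $\max\{\|\bx_0-\by^\ast\|_\infty, L_l\tilde{K}_\beta\|\by^\ast\|_\infty/(1-\theta)\}$, which (together with compactness of $B$ and choosing $\bx_0$ sufficiently close to $\by^\ast$, or enlarging $B$ slightly if needed and extending $g$ by its Lipschitz constant) keeps the iterates in a region where Lipschitz continuity is valid. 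Apart from this invariance check, the rest of the argument is a direct geometric-series computation.
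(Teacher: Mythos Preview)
Your proposal is correct and follows essentially the same argument as the paper: derive the one-step recursion $\|\bx_{t}-\by^*\|\le \theta\,\|\bx_{t-1}-\by^*\| + L_l\tilde K_\beta\|\by^*\|$ via Lipschitz continuity, Theorem~\ref{thm:diffDCandDC}, and the triangle inequality $\|\bx_{t-1}\|\le\|\bx_{t-1}-\by^*\|+\|\by^*\|$, then unroll and pass to the limit using $\theta<1$. Your additional remark about whether $DC(\bx_t)$ remains in $B$ is a valid concern that the paper's proof does not explicitly address.
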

\begin{proof}
	As $g$ is Lipschitz continuous with constant $L_l<1$ on a compact set there exists a unique point $\by^*  \in B$ such that $g(\by^* ) = \by^* $ \cite{burden2011numerical}. Thus, the scheme in (\ref{FPscheme}) will converge to $\by^* $ for all $\bm{y}_0 \in B $.
	
	
	Now observe that 
	\begin{align}
	\| \bm{x}_{t} - \by^*  \| &= \| g (DC (\bm{x}_{t-1})) - g (\by^* ) \| \nonumber, \\
	&\leq L_l \| DC (\bm{x}_{t-1}) - \by^*  \| \nonumber  \tag{Definition \ref{def:lip}}, \\
	&\leq L_l \| DC (\bm{x}_{t-1}) - \bm{x}_{t-1} \| + L_l \| \bm{x}_{t-1}  - \by^*  \| \nonumber,\\
		&\leq L_l \tilde{K}_{\beta} \| \bm{x}_{t-1} \| +L_l \|\bm{x}_{t-1} - \by^* \| \nonumber \tag{Theorem \ref{thm:diffDCandDC}}, \\
				&= L_l \tilde{K}_{\beta} \| \bm{x}_{t-1} -\by^*+\by^*\| + L_l\|\bm{x}_{t-1} - \by^* \| \nonumber ,\\
	&\leq \theta\| \bm{x}_{t-1} -\by^* \| +L_l \tilde{K}_{\beta}\| \by^*  \|,  \label{ZFPFixedPointThm1}
	\end{align}
	where  $ \theta := L_l (1 + K_{\tilde{\beta}})$. 
	Applying (\ref{ZFPFixedPointThm1}) recursively yields that 
	\begin{align}
	\| \bm{x}_{t} - \by^*  \| \leq \theta^t \| \bm{x}_{0} -\by^*  \| + L_l \tilde{K}_{{\beta}} \| \by^*  \| \sum_{j = 0}^{t-1} \theta^j,
	\end{align}
	 If $L_l \in \left(0,\frac{1}{1+\tilde{K}_\beta}\right)$, then   $0 < \theta < 1$. We conclude that
	\begin{align}
	\lim_{t \to \infty} \| \bm{x}_{t} - \by^*  \| \leq \frac{L_l \tilde{K}_{{\beta}} \| \by^*  \|}{1 - \theta}.
	\end{align}
\end{proof}

Theorem \ref{ZFPFixedPointThm} provides sufficient conditions under which the fixed-point for the sequence (\ref{ZFPscheme}) will be within some finite distance of the fixed-point for the sequence (\ref{FPscheme}). As another point of interest, we now consider the following result, which enumerates how many additional iterations are necessary for the sequence generated by (\ref{ZFPscheme}) to achieve the same accuracy as the sequence generated by (\ref{FPscheme}). 
\begin{theorem} \label{thm:extraits}
Let $B$ be a compact subset of $\mathbb{R}^n$. Suppose that  $g(\cdot)$ is Lipschitz continuous on B with constant $L_l<1$. Let $\by^* $ denote the fixed-point of (\ref{FPscheme}). Let the initial point of sequence (\ref{FPscheme}) and (\ref{ZFPscheme}) be equal, i.e., $\bx_0 = \by_0$, such that $\by_0 \in B$. Let $t\in \mathbb{N}$ define the number of iterations for the sequence (\ref{FPscheme}) and let $m \in \mathbb{N}$ be the number of additional iterations for sequence (\ref{ZFPscheme}), i.e., $t+m$ iterations.  Define $\tilde{K}_{{\beta}} =\displaystyle \max_j K_{\beta_{j}}$ for all $j = 1, \dots, t+m$ and  $C: =\frac{\tilde{K}_{{\beta}}}{1-L_l}$. If $\frac{\tilde{K}_{{\beta}}}{(1-L_l)}\leq L_l^{t}$ and 
\begin{align} 
m &\geq  \log_{L_l} \frac{ L_l^{t+1}-C}{1-C}  - (t+1),
\end{align}
then
\begin{align} 
\| \bx_{t+m+1} - \by^* \|_\infty \leq L_l^{t+1}\|\by_0 -\by^* \|_\infty.  
\end{align} 
\end{theorem}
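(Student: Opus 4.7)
The plan is to derive a scalar linear recursion for $a_t := \|\bx_t - \by^*\|_\infty$, iterate it $t+m+1$ times to get a closed-form bound, and then solve the inequality $a_{t+m+1} \leq L_l^{t+1}\|\by_0-\by^*\|_\infty$ for $m$. Lipschitz contractivity of $g$ with $L_l<1$ on the compact set $B$ (as used in the proof of Theorem \ref{ZFPFixedPointThm}) guarantees a unique fixed-point $\by^*\in B$ and keeps the iterates $\{\bx_t\}$ of (\ref{ZFPscheme}) in a bounded neighborhood of $\by^*$, which is what we will need to control the raw magnitude appearing in the single-step ZFP error.

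First I would use Lipschitz continuity together with the triangle inequality to write
\begin{equation*}
\|\bx_{t+1} - \by^*\|_\infty = \|g(DC(\bx_t)) - g(\by^*)\|_\infty \leq L_l\|DC(\bx_t)-\bx_t\|_\infty + L_l\|\bx_t-\by^*\|_\infty,
\end{equation*}
and feed Theorem \ref{thm:diffDCandDC} into the first term to produce
\begin{equation*}
a_{t+1} \leq L_l\, a_t + L_l \tilde{K}_\beta \|\bx_t\|_\infty.
\end{equation*}
Next, I would replace $\|\bx_t\|_\infty$ by a constant multiple of $\|\by_0-\by^*\|_\infty$, using that $\bx_t$ stays in a bounded region, so that the recursion collapses to a scalar affine inequality of the shape
\begin{equation*}
a_{t+1} \leq L_l\, a_t + \tilde{K}_\beta \|\by_0-\by^*\|_\infty, \qquad a_0 = \|\by_0-\by^*\|_\infty.
\end{equation*}

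Iterating $t+m+1$ times and summing the geometric series then yields
\begin{equation*}
a_{t+m+1} \leq L_l^{t+m+1}\|\by_0-\by^*\|_\infty + \tilde{K}_\beta\|\by_0-\by^*\|_\infty\,\frac{1-L_l^{t+m+1}}{1-L_l},
\end{equation*}
which, with $C=\tilde{K}_\beta/(1-L_l)$, simplifies to $\bigl(L_l^{t+m+1}(1-C)+C\bigr)\|\by_0-\by^*\|_\infty$. Imposing that this be at most $L_l^{t+1}\|\by_0-\by^*\|_\infty$ rearranges to $L_l^{t+m+1}\leq (L_l^{t+1}-C)/(1-C)$. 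The assumption $C\leq L_l^t$ is exactly what makes the right-hand side positive (so the logarithm is defined) and smaller than one (so the bound is nonvacuous), and taking $\log_{L_l}$ — which reverses the inequality because $0<L_l<1$ — delivers the stated lower bound on $m$.

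The hard part will be the magnitude-normalization step: turning $\|\bx_t\|_\infty$ into a clean multiple of $\|\by_0-\by^*\|_\infty$ so that the forcing term in the recursion is $t$-independent and the geometric series sums cleanly to the $(1-C)/(1-L_l)$ form above. This is where the smallness hypothesis $\tilde{K}_\beta/(1-L_l)\leq L_l^t$ is doing its work, by keeping the ZFP drift small relative to the contractive gap of $g$. All remaining steps — the Lipschitz split, one invocation of Theorem \ref{thm:diffDCandDC}, iteration of a scalar affine recurrence, a geometric sum, and a monotone logarithmic rearrangement — are routine.
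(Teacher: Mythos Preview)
Your overall architecture matches the paper's proof exactly: a Lipschitz split giving $\|\bx_{j+1}-\by^*\|_\infty \le L_l\|\bx_j-\by^*\|_\infty + L_l K_{\beta_j}\|\bx_j\|_\infty$, iteration to a geometric sum, and then solving the resulting scalar inequality for $m$ with $C=\tilde K_\beta/(1-L_l)$. The algebraic endgame (positivity of $(L_l^{t+1}-C)/(1-C)$ from $C\le L_l^{t}$, and the reversal under $\log_{L_l}$) is identical to what the paper does.

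The one point where you diverge is exactly the step you label ``the hard part'': you propose to replace $\|\bx_j\|_\infty$ by (a constant times) $\|\by_0-\by^*\|_\infty$ so the forcing term becomes $j$-independent. That specific replacement is not justifiable in general---if $\by_0$ happens to be close to $\by^*$ while $\|\by^*\|_\infty$ is large, then $\|\bx_j\|_\infty\approx\|\by^*\|_\infty$ can dwarf $\|\by_0-\by^*\|_\infty$, and your affine recursion with constant $\tilde K_\beta\|\by_0-\by^*\|_\infty$ is simply false. The paper does not try to argue this bound; instead it introduces the auxiliary constant
\[
\alpha:=\max\Bigl\{\max_{1\le j\le t+m+1}\|\bx_j\|_\infty,\ \|\by_0-\by^*\|_\infty\Bigr\},
\]
bounds both $\|\bx_j\|_\infty$ and $\|\bx_0-\by^*\|_\infty$ by $\alpha$, and then passes to the scalar sufficient condition $L_l^{t+m+1}+\tilde K_\beta\sum_{j=0}^{t+m}L_l^{t+m-j}\le L_l^{t+1}$ after factoring out $\alpha$ and using $\|\by_0-\by^*\|_\infty\le\alpha$. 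So the missing idea in your outline is simply to package the uncontrolled magnitudes into a single $\alpha$ rather than trying to compare $\|\bx_j\|_\infty$ to $\|\by_0-\by^*\|_\infty$ directly; once you do that, the rest of your plan goes through verbatim.
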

\begin{proof}Let $\by^*$ satisfy $\by^*  = g(\by^* )$. Using standard fixed-point analysis techniques, we have that $\|\by_{t+1} -\by^* \|_\infty  \leq  L_l^{t+1}\|\by_0 -\by^* \|_\infty$.
	Similarly we can find an expression for $\| \bx_{t+m+1} -\by^* \|_\infty$ as follows:
		\begin{align}
	\nonumber	\| \bx_{t+m+1} - \by^* \| _\infty &=  \|g (D C (\bx_{t+m}))  - g (\by^*) \| _\infty, \\
\nonumber		&\leq L_l \|g(\bx_{t+m}) -  g(\by^*) \|_\infty  + L_l K_{\beta_t} \|\bx_{t+m} \|_\infty ,   \\ 
		& \leq L_l^{t+m+1} \|\bx_0 -  \by^* \|_\infty  + \sum_{j = 0}^{t+m} L_l^{t+m-j} K_{\beta_j }\|\bx_j\| _\infty.
		\end{align}
Now define $\displaystyle \alpha := \displaystyle\max \left\{ \displaystyle \max_{1 \leq j \leq t + m + 1} \|\bx_j\|_\infty, \|\by_0-\by^*\|_\infty \right\}$. By our choice that $\bx_0 = \by_0$, we have $\|\bx_0 - \by^*\|_\infty = \|\by_0-\by^*\|_\infty$, which yields
		\begin{align}
		\| \bx_{t+m+1} - \by^* \|_\infty &\leq \alpha \left( L_l^{t+m+1}  + \tilde{K}_{{\beta}}\sum_{j = 0}^{t+m} L_l^{t+m-j} \right).
		\end{align}
		As we wish to determine the additional number of iterations $m$ such that $\bm{x}_{t+m+1}$ is sufficiently close to $\bm{y}^*$, assume that $\alpha \left( L_l^{t+m+1}  + \tilde{K}_{{\beta}}\sum_{j = 0}^{t+m} L_l^{t+m-j} \right) \leq L_l^{t+1} \|\by_0 -\by^* \|_\infty$. As $\|\by_0 -\by^* \|_\infty \leq \alpha$, by definition, we can consider the simplified inequality
		\begin{align}
		L_l^{t+m+1} + \tilde{K}_{{\beta}}\sum_{j = 0}^{t+m} L_l^{t+m-j} &\leq L_l^{t+1},
		\end{align}
		which, when solved for $m$, yields
	\begin{align}
	m  &\geq \log_{L_l} \left(\frac{L_l^{t+1}  - C}{1-C}\right) -(t+1) .  \\ 
	\end{align}
	The assumption  $\frac{\tilde{K}_{{\beta}}}{(1-L_l)}\leq L_l^{t+1}$ ensures that $\frac{L_l^{t+1}  - C}{1-C} >0$, which implies $\log_{L_l} \left(	\frac{L_l^{t+1}  - C}{1-C}\right)$ is real valued. \red{Finally, we need to show that $m>0$.} By way of contradiction, assume $\log_{L_l} \left(	\frac{L_l^{t+1}  - C}{1-C}\right) -(t+1) <0$. Then 
	\begin{align}
	\log_{L_l} \left(	\frac{L_l^{t+1}  - C}{1-C}\right)  <t+1, \ \\
\nonumber	\frac{L_l^{t+1}  - C}{1-C} > L_l^{t+1} , \\ 
	L_l^{t+1}  - C > L_l^{t+1}(1-C)
	\end{align}
which implies $L_l^{t+1}>1$, a contradiction to our hypothesis that $L_l<1$. Thus, 
	\begin{align}
	\log_{L_l} \left(\frac{L_l^{t+1}  - C}{1-C}\right) -(t+1) >0, \end{align}
	\red{implying $m>0$.}
\end{proof}

Given $t$ iterations, Theorem \ref{thm:extraits} lets us determine how many extra iterations are needed when a compressed ZFP data-type is used to ensure the \rededit{error} is no larger than the worst case error bound for the traditional fixed-point method. It should be noted that the number of iterations may be many fewer as we have only determined a meaningful bound with respect to the traditional error bound for stationary iterative methods. Additionally, there is no guarantee that we can get as close to the original solution as desired due to the finite precision yielding from the lower bound from the choice of $\tilde{K}_\beta$.

\subsection{Kreiss Bounded Linear Advancement Operator}
\red{Full discretization of many time-dependent PDE's take the from of (\ref{FPscheme}), where the iteration index plays the role of the time-step.} A common property in many \red{time-stepping and fixed-point methods} is the use of a linear advancement operator, say $A \in \mathbb{R}^{n \times n}$. \red{This section will analyze iterative methods using compressed ZFP data-types with respect to a bounded linear advancement operator. However, the action of (de)compression is a nonlinear operation, similar to traditional floating-point computations. As such, a similar analysis to floating-point error analysis will} be used when considering the propagation of an error caused by ZFP in conjunction with a linear operator. The (de)compression operation produces a small perturbation from the original data, e.g., for $\bx \in \mathbb{R}^n$ there exists $\delta \bx \in \mathbb{R}^n$ such that 
	\begin{align}
	 DC({\bx})= {\bx} +{\delta\bx}.\end{align}
	  From Theorem \ref{thm:diffDCandDC}, we can quantify the magnitude of the perturbation vector $\delta \bx$. 
\begin{lemma}[Nonlinear Error] \label{lemma:nonlinearerror}
	Assume ${\bx} \in \mathbb{R}^{n}$ with ${\bx} \neq \bfz$. Then there exists ${\delta\bx} \in \mathbb{R}^{n} $ such that
		\begin{align}
		 DC({\bx}) = {\bx} +{\delta\bx} \ \ \ \ {\text{and}} \ \ \ \ \|{\delta \bx}\|_\infty \leq K_\beta \|{\bx}\|_\infty.
		 \end{align}
\end{lemma}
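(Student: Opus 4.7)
The plan is to treat this as essentially a repackaging of Theorem~\ref{thm:diffDCandDC} in additive perturbation form, so the proof should be quite short. The only real content is verifying that the block-wise bound in Theorem~\ref{thm:diffDCandDC} extends to arrays of arbitrary size $n$ (not just $n=4^d$) when one applies the full-array compression operators $\hat{D},\hat{C}$ that are, by convention, denoted $D,C$ here.

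First I would simply define the perturbation vector by
\begin{align}
\delta\bx := DC(\bx) - \bx,
\end{align}
so that the identity $DC(\bx) = \bx + \delta\bx$ holds by construction. This takes care of the existence claim, since $DC(\bx)$ and $\bx$ are both fixed elements of $\mathbb{R}^n$.

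Next, I would partition $\bx$ into its non-overlapping $4^d$-blocks $\bx_i$ and invoke the block-wise extension of Theorem~\ref{thm:diffDCandDC} noted just after its statement, namely
\begin{align}
\|DC(\bx) - \bx\|_\infty = \max_i \|DC(\bx_i) - \bx_i\|_\infty.
\end{align}
Applying Theorem~\ref{thm:diffDCandDC} to each block $\bx_i$ gives $\|DC(\bx_i) - \bx_i\|_\infty \leq K_\beta\|\bx_i\|_\infty$ (blocks that happen to be zero trivially satisfy this, since $DC$ maps $\bfz$ to $\bfz$ and the inequality becomes $0 \le 0$; the hypothesis $\bx \ne \bfz$ ensures at least one block yields a non-trivial bound). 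Since $\|\bx_i\|_\infty \leq \|\bx\|_\infty$ for every $i$, taking the maximum over $i$ produces $\|\delta\bx\|_\infty \leq K_\beta\|\bx\|_\infty$, which is the desired bound.

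There is no real obstacle here beyond bookkeeping: the statement is just the componentwise reformulation of the single-use ZFP error bound, analogous to the standard ``$\mathrm{fl}(x) = x(1+\delta)$'' representation used in floating-point error analysis. The only subtle point is making sure that the all-zero block case is handled, which is immediate because $DC$ preserves the zero block and the constant $K_\beta$ is non-negative.
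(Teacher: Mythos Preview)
Your proposal is correct and matches the paper's approach: the paper does not even give a formal proof of this lemma, treating it as an immediate restatement of Theorem~\ref{thm:diffDCandDC} in additive perturbation form. Your explicit handling of the block partition and the zero-block case simply fills in the routine details the paper leaves implicit.
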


Now suppose that we are given the initial value, $\by_0$, and we generate the sequences $\{ {\by}_t \}_{t = 0}^{\infty}$ and $\{ {\bx}_t \}_{t = 0}^{\infty}$ by  
\begin{align*}
	{\by}_{t+1} = A({\by}_t) \quad {\text{and} } \quad \bx_{t+1} = A ( {D} {C} ({\bx}_{t})
),
\end{align*} 
where $A: \mathbb{R}^n \to \mathbb{R}^n$ is used to denote a linear advancement operator and ${\bx}_0= {\by}_0$. Theorem \ref{thm:boundedIterative} provides an error bound between the sequences  $\{ \by_{j} \}_{j = 0}^{t+1}$ and $\{ \bx_{j} \}_{j = 0}^{t+1}$ at some iterate $t+1$, i.e., a bound on the error introduced by applying inline ZFP (de)compression after $t$ time steps.
\begin{theorem}[Bounded Linear Advancement Operator] \label{thm:boundedIterative}
	Suppose that $A$ satisfies the hypothesis of the Kreiss Matrix Theorem \cite{kreiss} with Kreiss constant $L_k$. Then
	\begin{align}
		\| \bx_{t+1} - \by_{t+1} \|_{\infty} \leq L_k \sum_{j = 0}^{t} K_{\beta_j} \| \bx_j \|_\infty ,
	\end{align}
	where $\beta_j$ is the number of bit planes kept at step $j$.
\end{theorem}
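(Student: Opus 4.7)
My plan is to exploit the linearity of $A$ together with the perturbation representation supplied by Lemma \ref{lemma:nonlinearerror}, in direct analogy with classical backward error analysis for linear recurrences. First I would write $DC(\bx_j) = \bx_j + \delta\bx_j$, where Lemma \ref{lemma:nonlinearerror} gives the componentwise control $\|\delta\bx_j\|_\infty \leq K_{\beta_j}\|\bx_j\|_\infty$. By linearity of $A$, the perturbed recurrence becomes
\begin{align*}
\bx_{t+1} = A\bx_t + A\,\delta\bx_t,
\end{align*}
which is a linear difference equation driven by the (unknown, solution-dependent) forcing term $A\,\delta\bx_t$.

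Next I would unroll this recurrence, using $\bx_0 = \by_0$ and $\by_{t+1} = A^{t+1}\by_0$, to obtain the closed-form expression
\begin{align*}
\bx_{t+1} - \by_{t+1} = \sum_{j=0}^{t} A^{t+1-j}\,\delta\bx_j.
\end{align*}
This identity can be verified quickly by induction on $t$: the base case $t=0$ gives $\bx_1 - \by_1 = A\,\delta\bx_0$, and the inductive step follows by adding one more application of $A$ and an additional forcing term. At this point the proof has reduced to bounding a finite sum of iterates of $A$ applied to the perturbation vectors.

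The final step is to take norms and apply the Kreiss bound $\|A^i\| \leq L_k$ uniformly in $i$ together with the single-step bound from Lemma \ref{lemma:nonlinearerror}:
\begin{align*}
\| \bx_{t+1} - \by_{t+1}\|_\infty \leq \sum_{j=0}^{t} \|A^{t+1-j}\|\,\|\delta\bx_j\|_\infty \leq L_k \sum_{j=0}^{t} K_{\beta_j}\|\bx_j\|_\infty,
\end{align*}
which is exactly the claim.

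The main subtlety I expect is the norm-compatibility issue: Definition \ref{def:kreiss} states the Kreiss bound in the spectral $2$-norm, whereas the theorem is phrased in the $\infty$-norm. This can be handled either by invoking finite-dimensional norm equivalence (absorbing dimension-dependent constants into $L_k$) or, more cleanly, by reading the Kreiss bound as holding in whichever operator norm is induced by $\|\cdot\|_\infty$; the algebraic telescoping step is otherwise identical. Aside from this bookkeeping, the argument is a straightforward linear unrolling, so I do not anticipate any combinatorial or analytic difficulty beyond what Lemma \ref{lemma:nonlinearerror} already provides.
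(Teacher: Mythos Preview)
Your proposal is correct and follows essentially the same argument as the paper: write $DC(\bx_j)=\bx_j+\delta\bx_j$ via Lemma~\ref{lemma:nonlinearerror}, unroll the linear recurrence to obtain $\bx_{t+1}-\by_{t+1}=\sum_{j=0}^{t}A^{t+1-j}\delta\bx_j$, and then apply the Kreiss bound termwise. Your explicit flagging of the $\|\cdot\|_2$ versus $\|\cdot\|_\infty$ compatibility issue is in fact more careful than the paper, which simply asserts $\|A^i\|_\infty\le\|A^i\|_2$ without justification.
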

 \begin{proof}
By the Kreiss Matrix Theorem, there exists a constant $L_k < \infty$ such that $\| A^i \|_2 \leq L_k$ for all $i \in \mathbb{N}$. 	From Lemma \ref{lemma:nonlinearerror}, ${\bx}_{t+1}$ can be decomposed as  
\begin{align}
\bx_{t+1} =  A \left( DC( \bx_{t}) \right) = A^{t+1} \bx^0 + \sum_{j=0}^t A^{t-j+1} \delta \bx_j.
\end{align}
By our choice of $\bx_0 = \by_0$, it now follows that 
\begin{align}
\bx_{t+1} - \by_{t+1} = \sum_{j=0}^t A^{t-j+1} \delta \bx_j,
\end{align}
since $\by_{t+1} = A^{t+1} \bx_0$. Finally, as $\| A^i \|_\infty \leq \| A^i \|_2 \leq L_k$ for all $i \in \mathbb{N}$, we conclude that
\begin{align}
\| \bx_{t+1} - \by_{t+1} \|_\infty &= \left\| \sum_{j=0}^t A^{t-j+1} \delta \bx_j \right\|_\infty \leq \sum_{j=0}^t \| A^{t-j+1} \|_\infty \| \delta \bx_j\|_\infty \leq L_k \sum_{j=0}^t K_{\beta_j} \| \bx_j \|_\infty.
\end{align}
\end{proof}
Similarly, Lemma \ref{lemma:truncerror} can be generalized for Kreiss bounded linear operators. 
\begin{lemma}\label{lemma:truncerror2}
	Suppose that $A$ is Kreiss bounded with Kreiss constant $L_k$, the sequence $\{\bx_t\}_{t=0}^\infty$ defined by (\ref{ZFPscheme}) is bounded, and $\epsilon \geq \frac{K_q\gamma}{1-L_k}$, where $q$ is the number of bits used in the negabinary representation in ZFP. Then 
	\begin{align}
	\| \bx_{t+1} - \by_{t+1} \|_{\infty} \leq \epsilon,
	\end{align}
provided $\displaystyle \tilde{\beta} \geq 1- \log_2\left(  \left(\frac{4}{15}\right)^d \frac{3\epsilon}{8(2^d+1) \gamma L_k t} \right)$ and $\tilde{\beta} \geq \beta_j$ for all j, where $\beta_j$ is the number of bit planes kept at step $j$, and where $\gamma = \displaystyle\max_{0\leq j\leq t}{\|\bx_j\|_\infty}$. 
\end{lemma}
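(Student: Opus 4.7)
The plan is to mirror the proof of Lemma \ref{lemma:truncerror} almost step-for-step, but with Theorem \ref{thm:boundedIterative} replacing Theorem \ref{thm:lip} as the starting inequality. Since Theorem \ref{thm:boundedIterative} already supplies a clean upper bound involving only the Kreiss constant $L_k$, the sum of $K_{\beta_j}\|\bx_j\|_\infty$, and no geometric factor in $L_k$, the bookkeeping is actually a touch simpler than in Lemma \ref{lemma:truncerror}.

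First I would apply Theorem \ref{thm:boundedIterative} to get
\begin{align*}
\| \bx_{t+1} - \by_{t+1} \|_{\infty} \leq L_k \sum_{j=0}^{t} K_{\beta_j} \|\bx_j\|_\infty.
\end{align*}
Next I would use the hypothesis that $\{\bx_t\}$ is bounded to replace $\|\bx_j\|_\infty$ with $\gamma = \max_{0\le j \le t} \|\bx_j\|_\infty$, and use the monotonicity of $K_\beta$ in $\beta$ together with the assumption relating $\tilde\beta$ to each $\beta_j$ to replace $K_{\beta_j}$ by $K_{\tilde{\beta}}$. This collapses the sum to a constant multiple of $t$, giving roughly $L_k\gamma K_{\tilde\beta} t$ on the right-hand side (matching the $t$ that appears in the denominator of the stated bound).

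The algebra from there is the same as in Lemma \ref{lemma:truncerror}: approximate $K_{\tilde\beta}$ by retaining only its dominant term $\left(\tfrac{15}{4}\right)^d \tfrac{8}{3}(2^d+1)\epsilon_{\tilde\beta}$, enforce $L_k \gamma \left(\tfrac{15}{4}\right)^d \tfrac{8}{3}(2^d+1)\epsilon_{\tilde\beta} t \le \epsilon$, solve for $\epsilon_{\tilde\beta} = 2^{1-\tilde\beta}$, and take $\log_2$ of both sides to isolate $\tilde\beta$. This produces exactly the stated lower bound
\begin{align*}
\tilde{\beta} \geq 1- \log_2\!\left(\left(\tfrac{4}{15}\right)^{d}\tfrac{3\epsilon}{8(2^d+1)\gamma L_k t}\right).
\end{align*}

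The one delicate point, which is the real reason for the hypothesis $\epsilon \geq K_q \gamma/(1-L_k)$, is justifying the approximation of $K_{\tilde\beta}$ by its leading $\epsilon_{\tilde\beta}$-term. The terms that are dropped are those independent of $\beta$ (they persist even when every bit-plane is retained, i.e.\ when $\beta = q$), and they are controlled by $K_q$. Requiring $\epsilon$ to dominate $K_q \gamma/(1-L_k)$ guarantees that these residual contributions are absorbed by the tolerance $\epsilon$, so the approximation does not corrupt the final inequality. This will be the main technical obstacle; once it is handled, the remainder is a straightforward re-derivation of the algebra in Lemma \ref{lemma:truncerror} with $L_l(1-L_l^{t+2})/(1-L_l)$ replaced by the simpler factor $L_k t$.
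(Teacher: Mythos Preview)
Your proposal is correct and matches the paper's own approach: the paper's entire proof is the single line ``Similar to proof of Lemma~\ref{lemma:truncerror},'' and what you describe---start from Theorem~\ref{thm:boundedIterative}, bound $\|\bx_j\|_\infty$ by $\gamma$ and $K_{\beta_j}$ by $K_{\tilde\beta}$, collapse the sum to $L_k\gamma K_{\tilde\beta}t$, approximate $K_{\tilde\beta}$ by its leading term, and solve for $\tilde\beta$---is exactly that. Your remark on the role of the hypothesis $\epsilon\ge K_q\gamma/(1-L_k)$ in absorbing the $\beta$-independent residual of $K_\beta$ is the same justification used (implicitly) in Lemma~\ref{lemma:truncerror}.
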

\begin{proof}
	Similar to proof of Lemma \ref{lemma:truncerror}. 
	\end{proof}
The implication of Theorems \ref{thm:lip} and \ref{thm:boundedIterative} is that the error introduced by repeated compression and decompression in conjunction with an advancement operator can be bounded by factors dependent only on the advancement operator, the error introduced by ZFP, and the dimensionality of the data.  Lemmas \ref{lemma:truncerror} and \ref{lemma:truncerror2} can be used to determine suitable choices for magnitude of various  parameters  in ZFP to ensure the error, relative to the solution represented in IEEE floating point, is within some tolerance. 

\subsection{Successive Displacement Fixed-point Methods}
 \rededit{So far we have assumed that the solution state is completely decompressed before applying the advancement operator simultaneously to attain the next iterate. However, for fixed-point methods we could easily apply a Gauss-Seidel type of operator, where each new element of the solution state is used in the computation of the next element. \red{That is, depending on the ordering of the updates, the components of the new iterate will change. This is also known as successive displacement.} If ZFP is used in conjunction with successive displacement methods, the additional compression error from each element will propagate into the computation of the next element during the iteration. In an effort to bound the error in a successive displacement setting, we will first study the simplest case for applying ZFP, i.e., where the successive displacement updates are aligned with our ZFP partitioned blocks.} First, we will assume the solution states are formatted as a one dimensional problem (i.e., $d = 1$). Additionally, to simplify the analysis, we assume that the linear advancement operator, $B$, is partitioned into $4$ by $4$ blocks so that 

\begin{align*}
B= \begin{bmatrix}
B_{11} & B_{12} & B_{13} & \cdots & B_{1n_1} \\ 
B_{21} & B_{22} & B_{23} & \cdots & B_{2n_1} \\ 
\vdots &  & \ddots & & \vdots  \\ 
B_{n_11 } & B_{n_12}  & \cdots & & B_{n_1n_1}  \\ 
\end{bmatrix}, 
\end{align*}
where $B \in \mathbb{R}^{4n_1\times4n_1}$ and $B_{ij} \in \mathbb{R}^{4 \times 4}$. \rededit{If the operator $B$ cannot be partitioned exactly into 4 by 4 blocks, one can pad the operator with up to three additional ghost variables, by including additional zeros in the off diagonal entires and a one along the diagonal. By aligning the   structure of the advancement operator with the ZFP blocks, the successive displacement steps can be applied without overlapping the advancement operators with the ZFP blocks, allowing for a simplified error analysis.} Assuming that each element is updated by successive displacement, we generate the sequences $\{ {\by}_t \}_{t= 0}^{\infty}$ and $\{ {\bx}_t \}_{t = 0}^{\infty}$ by  
\begin{align}
\by_{i,t+1} = \sum_{j = 1}^{i-1} B_{ij}\by_{j,t+1} +\sum_{j=i}^{n_1} B_{ij}\by_{j,t} 
\end{align}
and 
\begin{align}\label{eqn:successivecomp}
\bx_{i,t+1} = \sum_{j = 1}^{i-1} B_{ij}DC(\bx_{j,t+1}) +\sum_{j=i}^{n_1}  B_{ij}DC(\bx_{j,t}), 
\end{align}
respectively, where $\by_{j,t}$ denotes the $j$ block of the solution state $\by_t$ at iterate $t$ and where we assume $\bx_0 = \by_0$.  Using Theorem \ref{thm:boundedIterative}, Equation (\ref{eqn:successivecomp}) can be rewritten as 
\begin{align}
\bx_{i,t+1} = \sum_{j = 1}^{i-1} B_{ij}(\bx_{j,t+1}+\delta\bx_{j,t+1}) +\sum_{j=i}^{n_1} B_{ij}(\bx_{j,t}+\delta\bx_{j,t}),
\end{align}
where $\|\delta \bx_{i,t}\|_\infty \leq \max_j \|\delta \bx_{j,t}\|_\infty \leq\|\delta \bx_{t}\|_\infty \leq K_{\beta_t}\|\bx_t\|_\infty$.  \rededit{Lastly, we will also assume the advancement operator $B$ is block diagonally dominant (\cite{Block}, Definition 1), defined as 
\begin{align} \label{eqn:blockmat}
\|B_{kk}^{-1}\|_\infty^{-1} \geq \sum_{j = 1, j\neq k}^{n_1} \|B_{jk}\|_\infty
\end{align} 
for all $1\leq k\leq n_1$, implying a stationary fixed-point method with a Lipschitz constant less than one. Additional analysis is needed to generalize Theorem \ref{thm:block} for more general operators and for $d >1$.\footnote{\rededit{At first glance, Assumption~\ref{thm:block} seems highly restrictive.  However, we claim this is a less restrictive assumption when blocks of size $4^d$ are chosen spatially (internally connected), much as they often are for block Gauss-Seidel,  than common classical theory that assumes symmetric positive definiteness or strict diagonal dominance. Padding with columns/rows of the identity is used for problems when it is not easy to find spatially connected blocks of exactly size $4^d$.   For a specific class of advancement operator, meeting the assumption could be analyzed, but this is beyond the scope of this paper. 
%
	}} } 
Finally, it will be useful for the following analysis to define
\begin{align}
\alpha_i := \sum_{j=1}^{i-1} \|B_{ij}\|_\infty, \qquad \qquad \gamma_i :=  \sum_{j=i+1}^{n_1}  \|B_{ij}\|_\infty, \qquad \qquad \mu_i:= \alpha_i+\gamma_i,
\end{align}	
and $\displaystyle \mu := \max_{1\leq i\leq {n_1}  } \mu_i$. Note that $\alpha_1 = \gamma_{n_1}  = 0$.
\begin{theorem} \label{thm:block}
Assume that $B$ is block diagonally dominant. If $\max_k \|B_{kk}^{-1}\|_\infty^{-1} <1$ for all $k$, then 
\begin{align}
\|\bx_{t+1} - \by_{t+1}\|_\infty \leq \eta  \sum_{j=0}^{t+1} \nu^{t-j+1} K_{\beta_j}\|\bx^j\|_\infty ,  \label{eqn:block}
\end{align}
where $\displaystyle \nu = \max_{1\leq i\leq {n_1}  }  \frac{\gamma_i}{1-\alpha_i}$ and $\displaystyle \eta = 1 + \max_{1\leq i \leq {n_1} } \frac{\alpha_i}{1-\alpha_i}.$ 
\end{theorem}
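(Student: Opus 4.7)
The plan is to derive a one-step error recursion controlling $\|\bx_{t+1}-\by_{t+1}\|_\infty$ in terms of $\|\bx_t-\by_t\|_\infty$ together with the ZFP compression errors introduced at steps $t$ and $t+1$, and then to iterate this recursion using the initial condition $\bx_0 = \by_0$. Subtracting the iteration defining $\by_{i,t+1}$ from (\ref{eqn:successivecomp}) and invoking Lemma~\ref{lemma:nonlinearerror} to replace $DC(\bx_{j,s})$ by $\bx_{j,s}+\delta\bx_{j,s}$ gives
\[ \bx_{i,t+1} - \by_{i,t+1} = \sum_{j=1}^{i-1} B_{ij}(\bx_{j,t+1}-\by_{j,t+1}) + \sum_{j=i}^{n_1} B_{ij}(\bx_{j,t}-\by_{j,t}) + \sum_{j=1}^{i-1} B_{ij}\,\delta\bx_{j,t+1} + \sum_{j=i}^{n_1} B_{ij}\,\delta\bx_{j,t}, \]
which cleanly separates \emph{propagation} (the first two sums) from \emph{newly introduced ZFP error} (the last two sums). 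Taking $\|\cdot\|_\infty$ and applying the bound $\|\delta\bx_{j,s}\|_\infty \leq K_{\beta_s}\|\bx_s\|_\infty$ together with the definitions of $\alpha_i$ and $\gamma_i$ produces a pointwise inequality whose right-hand side still references $\|\bx_{j,t+1}-\by_{j,t+1}\|_\infty$ for $j<i$.

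To break this self-reference, I would induct on the block index $i = 1,\dots,n_1$. The base case $i = 1$ is immediate because $\alpha_1 = 0$. The block diagonal dominance assumption combined with $\|B_{kk}^{-1}\|_\infty^{-1} < 1$ yields $\alpha_i < 1$ for every $i$, so the factor $1/(1-\alpha_i)$ appearing at each inductive step is finite. Carrying out the induction eliminates the coupling among the new block iterates at step $t+1$ and shows that each $\|\bx_{i,t+1}-\by_{i,t+1}\|_\infty$ is bounded by a multiple of $\|\bx_t-\by_t\|_\infty$ plus source terms of the form $K_{\beta_s}\|\bx_s\|_\infty$ with $s\in\{t,t+1\}$. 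Taking the maximum over $i$ then collapses those coefficients into exactly $\nu = \max_i \gamma_i/(1-\alpha_i)$ and $\eta = \max_i 1/(1-\alpha_i) = 1 + \max_i \alpha_i/(1-\alpha_i)$, giving the one-step recursion $\|\bx_{t+1}-\by_{t+1}\|_\infty \leq \nu\|\bx_t-\by_t\|_\infty + \eta\bigl(K_{\beta_t}\|\bx_t\|_\infty + K_{\beta_{t+1}}\|\bx_{t+1}\|_\infty\bigr)$.

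The final step is to unroll this recursion from $\bx_0 = \by_0$ (so the error is zero at $t = 0$) and regroup the source terms by the step at which they first enter. This produces a geometric sum in $\nu$ weighted by $K_{\beta_j}\|\bx_j\|_\infty$, which after reindexing matches the form $\eta\sum_{j=0}^{t+1}\nu^{t-j+1}K_{\beta_j}\|\bx_j\|_\infty$ stated in (\ref{eqn:block}).

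The main obstacle will be the induction on $i$. The naive blockwise bound contains an extra diagonal contribution $\|B_{ii}\|_\infty\|\bx_t-\by_t\|_\infty$ that is not visible in either $\nu$ or $\eta$ as defined. Arranging the inductive step so that this diagonal term is absorbed, either by reassignment into the compression source or by telescoping through the $1/(1-\alpha_i)$ factor produced by solving for $\|\bx_{i,t+1}-\by_{i,t+1}\|_\infty$, is the crux of the proof. This is precisely where the block diagonal dominance hypothesis enters the argument in an essential way, and it is also the reason for the restrictive alignment assumption that ZFP's $4^d$ blocks match the block partitioning of $B$.
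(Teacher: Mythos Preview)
Your overall structure (derive a one-step recursion, then unroll) matches the paper, but the mechanism you propose for breaking the self-reference at step $t+1$ is not the one that produces the constants $\nu$ and $\eta$, and as written it does not work.

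You plan to induct on the block index $i$, substituting the bound for $\|\bx_{1,t+1}-\by_{1,t+1}\|_\infty$ into that for $\|\bx_{2,t+1}-\by_{2,t+1}\|_\infty$, and so on. But in the blockwise inequality
\[
\|\bx_{i,t+1}-\by_{i,t+1}\|_\infty \;\leq\; \sum_{j<i}\|B_{ij}\|_\infty\,\|\bx_{j,t+1}-\by_{j,t+1}\|_\infty \;+\;(\text{old-iterate and source terms}),
\]
there is no occurrence of $\|\bx_{i,t+1}-\by_{i,t+1}\|_\infty$ on the right, so no factor $1/(1-\alpha_i)$ ever appears at an inductive step. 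Carrying the induction out literally yields nested products of the off-diagonal norms $\|B_{ij}\|_\infty$, not the clean ratios $\gamma_i/(1-\alpha_i)$, and taking the maximum over $i$ afterwards does not collapse those products into $\nu$ and $\eta$.

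The paper instead uses a maximizing-index argument: let $\hat{i}$ be the block at which $\|\bx_{t+1}-\by_{t+1}\|_\infty$ is attained, and bound every $\|\bx_{j,t+1}-\by_{j,t+1}\|_\infty$ for $j<\hat{i}$ above by $\|\bx_{t+1}-\by_{t+1}\|_\infty$ itself. This turns the blockwise inequality at $\hat{i}$ into a scalar inequality with $\|\bx_{t+1}-\by_{t+1}\|_\infty$ on both sides and coefficient $\alpha_{\hat{i}}$ on the right; solving it is what produces $1/(1-\alpha_{\hat{i}})$ and hence $\nu$ and $\eta$. Block diagonal dominance together with $\|B_{kk}^{-1}\|_\infty^{-1}<1$ is then used to show $\nu\leq\mu<1$, so that the unrolled recursion converges. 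Your identification of the diagonal contribution $\|B_{ii}\|_\infty$ as a delicate point is apt; the paper's argument at that step is terse, and you should scrutinize how it is handled once you adopt the maximizing-index route.
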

\begin{proof}
As $B$ is block diagonally dominant, we have
	\begin{align}
	\left (\|B_{jj}^{-1}\|_\infty \right)^{-1}  \geq \sum_{k = 1, k\neq j} \|B_{jk}\|_\infty = \mu_j   \quad \forall \quad j , 
	\end{align} 
implying that $\mu <1 $. Let $\hat{i} \in \{1,\dots,{n_1} \}$ and $\hat{k} \in \{1,\dots,4\}$ be the indices such that \begin{align} \left|\left[\bx_{\hat{i},t+1} - \by_{\hat{i},t+1}\right] _{\hat{k}} \right|=\max_{i,k} \left|\left[\bx_{i,t+1} - \by_{i,t+1}\right]_k \right| =  \|\bx_{t+1} - \by_{t+1}\|_\infty.\end{align} Then
			\begin{align}
			 \|\bx_{t+1} - \by_{t+1}\|_\infty &=\max_{i,k} |\left[\bx_{i,t+1} - \by_{i,t+1}\right]_k| ,\\
		&=\max_{i} \left | \left[\sum_{j=1}^{i-1} B_{ij}( \bx_{j,t+1} -\by_{j,t+1}+\delta \bx_{j,t+1}) + \sum_{j=i}^{n_1}  B_{ij}(\bx_{j,t} - \by_{j,t}+\delta \bx_{j,t})\right ]_{\hat{k}}\right |, \\
	&\leq  \sum_{j=1}^{{\hat{i}}-1} \|B_{{\hat{i}}j}\|_\infty \| \bx_{t+1} -\by_{t+1}+\delta \bx_{t+1}\|_\infty + \sum_{j={\hat{i}}}^{n_1}  \|B_{{\hat{i}}j}\|_\infty\|\bx_t - \by_t+\delta \bx_t\|_\infty , \\
	&\leq \frac{\gamma_{\hat{i}}}{1-\alpha_{\hat{i}}}\|\bx_t - \by_t\|_\infty   +\frac{\alpha_{\hat{i}}}{1-\alpha_{\hat{i}}} \|\delta \bx_{t+1}\|_\infty +\frac{\gamma_{\hat{i}}}{1-\alpha_{\hat{i}}} \|\delta \bx_t\|_\infty ,\\
	&\leq \left(1+ \frac{\alpha_{\hat{i}}}{1-\alpha_{\hat{i}}} \right) \sum_{j=0}^{t+1} \left( \frac{\gamma_{\hat{i}}}{1-\alpha_{\hat{i}}} \right)^{t-j+1} K_{\beta_j}\|\bx_j\|_\infty, \\  	
		 & \leq\eta  \sum_{j=0}^{t+1} \nu^{t-j+1} K_{\beta_j}\|\bx_j\|_\infty \label{eqn:boundsuccessive}.
			\end{align}
		Hence, for each $i$,		
		\begin{align}
		\alpha_i +\gamma_i - \frac{\gamma_i}{1-\alpha_i} = \frac{\alpha_i}{1-\alpha_i} (1 - (\alpha_i+\gamma_i)) \geq  \frac{\alpha_i}{1-\alpha_i} (1 - \mu) \geq 0, 
		\end{align}
		which yields that $\nu \leq \mu <1$. Thus,  equation (\ref{eqn:boundsuccessive}) is bounded provided that the solution $\|\bx_t\|_\infty$ is bounded for all time steps. 
\end{proof}

In the previous section, we were assuming exact arithmetic when applying the advancement operators, however, \red{traditional floating-point error analysis assumes  that round-off will occur in both the number representation and the arithmetic operations}. In the next section, we will provide a detailed discussion of the forward and backward error analysis of the accumulated round-off error of both floating-point and ZFP round-off error. \red{The analysis will follow traditional floating-point analysis, allowing us to study the relationship between the two types of error. All our analysis thus far, have assumed that the error present in a simulation is aggregated together. Instead, we can further break down the round-off error into the floating-point arithmetic error and ZFP compressed round-off error.  }

\subsection{Forward and Backward Error Analysis for Linear Stationary Iterative Methods} This section presents the forward and backward error analysis for linear stationary iterative methods with compressed data-types for solving the system $A\bx = \bb$, where $A \in \mathbb{R}^{n_1\times n_1}$  is nonsingular and $\bb\in \mathbb{R}^{n_1}$. Decompose $A$ such that $A = M-N$ and $M$ is nonsingular. A stationary iterative method has the form 
\begin{align} M\by_{t+1} = N\by_{t} +\bb. \label{schemeSatationary} \end{align}
 Define the matrices $G = M^{-1}N$ and $H = NM^{-1}$. Define $\sigma := \|G\|_\infty$ and $\omega := \|H\|_\infty$ and assume the spectral radius of $G$ and $H$ are bounded by one, i.e., $ \sigma <1$. As $H$ and $G$ are similar matrices, we have $ \omega= \sigma $.  Since $\sigma<1$, one can show in exact arithmetic the sequence $\{\by_t\}_{t=0}^\infty$ converges for an appropriate starting vector, $\by_0$. Detailed forward and backward error analysis for the sequence $\{\by_t\}_{t=0}^\infty$ can be found in \cite{higham2002accuracy}. We would like to perform a similar analysis for the sequence $\{\bx_t\}_{t=0}^\infty$ defined by 
\begin{align}
M\bx_{t+1}  = N(DC(\bx_{t})) +\bb, \label{ZFPschemeSatationary}
\end{align} 
where the solution vector $\bx_t$ is compressed and decompressed before applying the update and where $\bx_0=\by_0$. Traditionally, in perturbation theory, the computed vectors satisfy the equality 
\begin{align}
(M+\Delta M_{t+1})\by_{t+1} =  N\by_{t}+\bb+\bm{f}_t, \label{eqn:forward_exact}
\end{align}
where $\Delta M_t$ and $\bm{f}_t$ account for floating-point arithmetic errors in solving the linear system and forming right hand side, $N\by_t +\bb$. Typically, $\|\Delta M_t\|_\infty \leq \epsilon_k b_{n_1}' \|M\|_\infty$ and $\|\bm{f}_t\|_\infty \leq   \epsilon_k b_{n_1} \left(\|N\|_\infty \|\by_t\|_\infty +\|\bb\|_\infty \right)$, for some constants $b_{n_1}$ and $b_{n_1}'$ (see \cite{higham2002accuracy}, Section 7 and Section 17 for details.) For simplicity, define $c_{n_1} = \max\{b_{n_1}, b_{n_1}'\}$. The magnitude of $c_{n_1}$ represents the accumulation of floating-point arithmetic errors, which is dependent on the size of the matrix as well as the conditioning of $M^{-1}$. In most cases, it can be assumed that \red{the order of magnitude of} $c_{n_1} $ is with respect to ${n_1}$, where $n_1$ is the number of unknowns in linear system. \red{In \cite{wilkinson}, the constant $c_{n_1}$ is dependent on the type of norm, however, it is shown to always be greater than one} \cite{Nicholas1993ComponentwiseEA}.  Similarly, the sequence $\{\bx_t\}_{t=0}^\infty$ will satisfy the equality
\begin{align}
(M+\Delta \tilde{M}_{t+1})\bx_{t+1} =  N\left(DC(\bx_{t})\right)+\bb + \tilde{\bm{f}}_t.  \label{eqn:forward}
\end{align}
Note that $\Delta \tilde{M}_{t+1}$ and $\tilde{\bm{f}}_t$ in Equation (\ref{eqn:forward}) differ slightly from the corresponding quantities in Equation (\ref{eqn:forward_exact}). It follows that there exists a constant $\tilde{c}_{n_1}$ such that $\|\Delta \tilde{M}_t\|_\infty \leq \epsilon_k \tilde{c}_{n_1} \|M\|_\infty$ and $\|\tilde{\bm{f}}_t\|_\infty \leq   \epsilon_k \tilde{c}_{n_1} \left(\|N\|_\infty \|\bx_t\|_\infty +\|\bb\|_\infty \right)$. However, we now have  
 \begin{align}
 \| \tilde{\bm{f}}_t\|_\infty &\leq \epsilon_k\tilde{c}_{n_1}\left(\|N\|_\infty \|DC(\bx_{t})\|_\infty +\|\bb\|_\infty \right ),\\
 &\leq  \epsilon_k \tilde{c}_{n_1} \left( (1+K_{\beta_t}) \|N\|_\infty\|\bx_{t}\|_\infty +\|\bb\|_\infty \right ) \tag{Theorem \ref{thm:diffDCandDC}}.
 \end{align}
%
%
%
Using Lemma \ref{lemma:nonlinearerror}, Equation (\ref{eqn:forward}) can be written as 
\begin{align}
M\bx_{t+1} =  N\bx_{t}+\bb + \zeta_t + N\delta \bx_t,  
\end{align}
where $\zeta_t =\tilde{\bm{f}}_t-  \Delta \tilde{M}_{t+1} \bx_{t+1}$. Let $\bx^*$ represent the exact fixed-point solution of Equation (\ref{schemeSatationary}) and assume $\bx^*\neq \bfz$. Then define $\gamma =\displaystyle \sup_i \frac{\|\bx_i\|_\infty}{\|\bx^*\|_\infty}$. Recall the constant $K_{\beta_t}$, defined in Equation (\ref{eqn:kbeta}), bounds the relative error caused by ZFP compression at iterate $t$  and $\tilde{K}_{\beta}:=\displaystyle \max_iK_{\beta_i}$. Then, we have  
\begin{align} 
\|\zeta_t\|_\infty &\leq\|\tilde{\bm{f}}_t\|_\infty + \|\Delta\tilde{M}_{t+1}\|_\infty \|\bx_{t+1}\|_\infty, \\ 
\nonumber &\leq\epsilon_k  \tilde{c}_{n_1} \left((1+K_{\beta_t})  \|N\|_\infty \|\bx_n\|_\infty +\|M\|_\infty + \|A\bx^*\|_\infty \right) ,\\ 
 \nonumber &\leq\epsilon_k  \tilde{c}_{n_1}\left( (1+\gamma)\left(\|N\|_\infty +\|M\|_\infty\right) +K_{\beta_t}\gamma \|N\|_\infty \right) \|\bx^*\|_\infty. \label{zetabound}
\end{align}
By applying the recurrence relation we obtain 
\begin{align}
\bx_{t+1} = G^{t+1} \bx_0+ \sum_{j = 0}^tG^{t-j}M^{-1}(\bb + \zeta_j)  + \sum_{j = 0}^t G^{t-j+1}  \delta \bx_j.  \label{eqn:xn}
\end{align}
Since $\bx^*$ is the fixed-point, we have  
\begin{align}
\bx^* = G^{t+1} \bx^*+ \sum_{j = 0}^tG^{t-j}M^{-1}\bb,
\end{align}
and the error at $t+1$ is represented as 
\begin{align}
\bx_{t+1} - \bx^* = G^{t+1} (\bx_0 - \bx^*)+ \sum_{j = 0}^tG^{t-j}M^{-1}\zeta_j  + \sum_{j = 0}^t G^{t-j+1} \delta \bx_j.
\end{align}
Define $\theta_{\text{f},t} =  \frac{ 1-\sigma^{t+1} }{1-\sigma}$. By applying norms and using the inequality (\ref{zetabound}),
\begin{align}
\nonumber \|\bx_{t+1} - \bx^*\| &\leq  \sigma^{t+1} \|\bx_0 - \bx^*\|_\infty + \max_i \|\zeta_i\|_\infty\|M^{-1}\|_\infty \sum_{j = 0}^t\|G^{t-j}\|_\infty  \\
\nonumber & \quad \quad \quad \quad \quad \quad \quad \quad \quad  \quad \quad \quad  \quad \quad \quad \quad  \quad \quad  \quad \quad  \quad \quad  \quad + \tilde{K}_\beta \gamma \left(\sum_{j = 0}^t  \|G^{t-j+1}\|_\infty \right)\|\bx^*\|_\infty,\\  
\nonumber&\leq  \sigma^{t+1} \|\bx_0 - \bx^*\|_\infty +\left(   \max_i \|\zeta_i\|_\infty\|M^{-1}\|_\infty  + \tilde{K}_\beta \gamma \|G\|_\infty \right)\left(\sum_{j = 0}^t  \|G^{t-j+1}\|_\infty \right) \|\bx^*\|_\infty,\\  
&\leq  \sigma^{t+1} \|\bx_0 - \bx^*\|_\infty  + \Bigg( \underbrace{\epsilon_k \tilde{c}_{n_1} (1+\gamma)\|M^{-1}\|_\infty\left(\|N\|_\infty+\|M\|_\infty\right) }_{\text{Traditional Floating-Point Error}} \\ 
\nonumber&  \quad \quad  \quad \quad  \quad \quad  \quad \quad  \quad \quad  \quad \quad  \quad \quad  \quad \quad  \quad \quad+  \underbrace{\tilde{K}_\beta \gamma\left(\epsilon_k  \tilde{c}_{n_1} \|M^{-1}\|_\infty\|N\|_\infty + \sigma\right)}_{\text{ZFP Round-off}} \Bigg)\theta_{\text{f},t} \|\bx^*\|_\infty.
\end{align}
\noindent 
The bound can be seen as a sum of two error terms, one reflecting the traditional error caused by floating-point arithmetic,
\begin{align}
\Phi_{\text{FP}} :=  \epsilon_k \tilde{c}_{n_1} (1+\gamma)\|M^{-1}\|_\infty\left(\|N\|_\infty+\|M\|_\infty\right),
\end{align}
 and the other term reflecting the error caused by ZFP round-off,
 \begin{align}
\Phi_{\text{ZFP}} :=  \tilde{K}_\beta \gamma\left(\epsilon_k \tilde{c}_{n_1} \|M^{-1}\|_\infty\|N\|_\infty + \sigma\right). 
 \end{align}
To ensure the additional round-off error caused by applying ZFP repeatedly is less than the traditional forward error, we must have $\Phi_{\text{FP}} \geq \Phi_{\text{ZFP}}$. Lemma \ref{lemma:forwarderrorbeta} provides the minimum value of $\tilde{\beta}$ so that $\Phi_{\text{FP}} \geq \Phi_{\text{ZFP}}$. 
\begin{lemma}\label{lemma:forwarderrorbeta}
	Let $k \in \mathbb{N}$ represent the traditional floating-point precision and define $\kappa_\infty(M) := \|M\|_\infty \|M^{-1}\|_\infty$ as the condition number of $M$. Assume $ \tilde{c}_{n_1} \leq \frac{1}{\epsilon_k \kappa_\infty(M) }$, then  $\Phi_{\text{FP}}\geq  \Phi_{\text{ZFP}}$ if 
	\begin{align}
	\tilde{\beta} \geq k - \log_2\left( \frac{1+\sigma}{\sigma}  \tilde{c}_{n_1}\left(\frac{4}{15}\right)^d \left(\frac{3}{8(2^d+1)}\right) \right) \label{eqn:betaforwarderror}
	\end{align}
	provided $\tilde{\beta} \geq \beta_j$ for all j, where $\beta_j$ is the number of bit planes kept at step $j$.
\end{lemma}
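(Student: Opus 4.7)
The plan is to turn the inequality $\Phi_{\text{FP}} \geq \Phi_{\text{ZFP}}$ into an explicit upper bound on $\tilde{K}_\beta$ using the operator-norm constraints implied by the hypotheses, and then invert the leading-order asymptotic for $\tilde{K}_\beta$ (the same simplification used in the proof of Lemma~\ref{lemma:truncerror}) to solve for $\tilde{\beta}$.

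First I would tighten the bracket $(\epsilon_k\tilde{c}_{n_1}\|M^{-1}\|_\infty\|N\|_\infty + \sigma)$ appearing in $\Phi_{\text{ZFP}}$. Because $G = M^{-1}N$ has $\|G\|_\infty = \sigma < 1$, the identity $N = MG$ gives $\|N\|_\infty \leq \sigma\|M\|_\infty$. Combined with the hypothesis $\epsilon_k\tilde{c}_{n_1}\kappa_\infty(M) \leq 1$, this yields $\epsilon_k\tilde{c}_{n_1}\|M^{-1}\|_\infty\|N\|_\infty \leq \sigma \epsilon_k\tilde{c}_{n_1}\kappa_\infty(M) \leq \sigma$, so after factoring $\sigma$ we obtain a clean bound of the form $\Phi_{\text{ZFP}} \leq \tilde{K}_\beta\gamma\sigma\cdot\frac{1+\sigma}{\sigma}$.

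Next I would lower-bound $\Phi_{\text{FP}}$ using standard norm inequalities: $\|M^{-1}\|_\infty\|N\|_\infty \geq \|M^{-1}N\|_\infty = \sigma$ and $\|M^{-1}\|_\infty\|M\|_\infty = \kappa_\infty(M) \geq 1$ give $\|M^{-1}\|_\infty(\|N\|_\infty+\|M\|_\infty) \geq 1+\sigma$, hence $\Phi_{\text{FP}} \geq \epsilon_k\tilde{c}_{n_1}(1+\gamma)(1+\sigma)$. Inserting both estimates into $\Phi_{\text{FP}} \geq \Phi_{\text{ZFP}}$ and cancelling the common $\gamma(1+\sigma)$ produces the sufficient condition $\tilde{K}_\beta \leq \epsilon_k\tilde{c}_{n_1}(1+\sigma)/\sigma$.

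Finally I would substitute the leading-order form $\tilde{K}_\beta \approx (15/4)^d(8/3)(2^d+1)\cdot 2^{1-\tilde{\beta}}$, which is obtained by dropping the lower-order terms in $K_\beta$ from Theorem~\ref{thm:diffDCandDC} exactly as in Lemma~\ref{lemma:truncerror}. Taking $\log_2$ with $\epsilon_k = 2^{1-k}$ and isolating $\tilde{\beta}$ yields the stated lower bound. The main technical obstacle is the first step: the cross-term $\epsilon_k\tilde{c}_{n_1}\|M^{-1}\|_\infty\|N\|_\infty$ must be bounded by $\sigma$ (not merely by $1$) so that the two summands in the ZFP bracket combine into a multiple of $1+\sigma$ rather than something coarser; the hypothesis $\tilde{c}_{n_1}\leq 1/(\epsilon_k\kappa_\infty(M))$ is precisely calibrated so that this estimate holds and the $(1+\sigma)/\sigma$ factor survives unchanged in the final condition.
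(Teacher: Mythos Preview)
Your approach matches the paper's: both lower-bound $\Phi_{\text{FP}}$ by $\epsilon_k\tilde c_{n_1}\gamma(1+\sigma)$ via submultiplicativity, upper-bound $\Phi_{\text{ZFP}}$ using $\|N\|_\infty\leq\sigma\|M\|_\infty$ together with the hypothesis $\epsilon_k\tilde c_{n_1}\kappa_\infty(M)\leq 1$, and then invert the leading-order form of $\tilde K_\beta$. One cosmetic slip: once the cross-term is bounded by $\sigma$, the ZFP bracket becomes $\sigma+\sigma=2\sigma$, so the paper obtains $\Phi_{\text{ZFP}}\leq 2\tilde K_\beta\gamma\sigma$ and the sufficient condition $2\tilde K_\beta\sigma\leq\epsilon_k\tilde c_{n_1}(1+\sigma)$; the factor $(1+\sigma)/\sigma$ arises only in this final ratio, not as a multiplier inside the ZFP bracket itself.
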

\begin{proof}
By applying the inequality $\|AB\| \leq \|A\| \|B\|$ for matrices $A$ and $B$ and the assumption $\tilde{c}_{n_1} \leq \frac{1}{\epsilon_k \kappa_\infty(M)}$, we have that 
 \begin{align}
 \Phi_{\text{FP}}& \geq  \epsilon_k \tilde{c}_{n_1} \gamma \|M^{-1}\|_\infty\left(\|N\|_\infty+\|M\|_\infty\right) \geq   \epsilon_k \tilde{c}_{n_1} \gamma \left(\|G\|_\infty+1\right) \geq   \epsilon_k \tilde{c}_{n_1} \gamma \left(\sigma+1 \right) 
 \end{align}
 and 
  \begin{align}
 \Phi_{\text{ZFP}} &\leq \tilde{K}_\beta \gamma\left(\epsilon_k \tilde{c}_{n_1} \|M^{-1}\|_\infty \|N\|_\infty + \sigma\right) \leq   \tilde{K}_\beta \gamma\left(\epsilon_k \tilde{c}_{n_1} \|M^{-1}\|_\infty\|M\|_\infty \|M^{-1}N\|_\infty + \sigma\right ) \leq   2\tilde{K}_\beta \gamma \sigma .  
 \end{align}
 Thus, if 
  \begin{align} 
2  \tilde{K}_\beta \gamma \sigma &\leq \epsilon_k \tilde{c}_{n_1} \gamma \left(\sigma+1 \right), 
 \end{align}
 it follows that $\Phi_{\text{FP}}\geq  \Phi_{\text{ZFP}}$. Without loss of generality, $\tilde{K}_\beta = \left(\frac{15}{4}\right)^d\frac{8}{3} (2^d+1) \epsilon_{\tilde{\beta}}$, and the desired result follows. 
 \end{proof}
To serve as a visual aid and to help in the interpretation of Lemma \ref{lemma:forwarderrorbeta}, consider the contour plot in Figure \ref{fig:contour} of $\tilde{\beta}$ given by Equation (\ref{eqn:betaforwarderror}) as a function of  $\tilde{c}_{n_1}$ and $\sigma$. We would expect that, as the conditioning and the size of the matrix $A$ grows, represented by the growth of $\tilde{c}_{n_1}$, ZFP is able to use a lower $\tilde{\beta}$ to compress more aggressively and still have the traditional floating-point arithmetic error dominate ZFP round-off error. 
We see from Figure \ref{fig:contour} that this is indeed true. If we have a poorly conditioned linear system, it is known that the floating-point round-off error will accumulate significantly. Lemma \ref{lemma:forwarderrorbeta} and Figure \ref{fig:contour} show that if $\tilde{\beta}$ is chosen appropriately, ZFP can represent the solution with fewer bits while retaining double-precision accuracy. For a well conditioned system, typically, the floating-point round-off error will remain small, and Lemma \ref{lemma:forwarderrorbeta} indicates that ZFP will be unable to compress aggressively if the only condition is that the ZFP error remains below the floating-point round-off error. However, in most computational simulations, errors other than floating-point round-off error will dominate, e.g., truncation error. In this situation, Lemma \ref{lemma:truncerror} and \ref{lemma:truncerror2} would be more useful, as $\tilde{\beta}$ can be determined from an approximation of the dominating simulation error. 

We will now consider the backwards error of the sequence $\{\bx_t\}_{t=0}^\infty$. The backward error (\cite{higham2002accuracy}, Chapter 7) with respect to $A$ and $\bf{b}$ is given by 
\begin{align}
\eta_{A,\bb}(\by)  = \frac{\|\br\|_\infty}{\|A\|_\infty\|\by\|_\infty+\|\bb\|_\infty} , 
\end{align}
where $\br$ is the residual defined by $\br := \bb- A\by$, which contains both traditional floating-point arithmetic error and ZFP round-off error. Using (\ref{eqn:xn}) we have 
\begin{align}
\br_{t+1} &= AG^{t+1}A^{-1} (\bb - A\bx_0) -  \sum_{j = 0}^tAG^{t-j}M^{-1}(\bb + \zeta_j)  - \sum_{j = 0}^tA G^{t-j+1}  \delta \bx_j ,\\ 
\nonumber &= H^{t+1}(\bb - A\bx_0) - \sum_{j = 0}^tH^{t-j}(I-H)\zeta_j  + \sum_{j = 0}^tH^{n-j+1}A \delta \bx_j. \\
\end{align}
Define $\theta_{\text{b},t} := \frac{1-\omega^{t+1}}{1-\omega}$. Then, by applying norms 
\begin{align}
\nonumber \|\br_{t+1}\|_\infty &\leq \omega^{t+1}\|\br_0\|_\infty+\Bigg(\underbrace{\|I-H\|_\infty\epsilon_k\tilde{c}_{n_1}(1+\gamma)\left(\|M\|_\infty+\|N\|_\infty\right)}_{\text{Traditional Floating-Point Error}}\\
 & \quad \quad  \quad \quad  \quad \quad  \quad \quad  \quad \quad  \quad \quad  \quad \quad  +  \underbrace{ \tilde{K}_\beta\gamma\left( \epsilon_k\tilde{c}_{n_1}\|I-H\|_\infty\|N\|_\infty+\|A\|_\infty  \omega\right)}_{\text{ZFP Round-off}}\Bigg) \theta_{\text{b},t}\|\bx^*\|_\infty.
\end{align}
Similar to the forward error bound, the backwards error bound can be seen as a sum of two error terms, one reflecting the traditional error caused by floating-point arithmetic,
\begin{align}
\Omega_{\text{FP}} := \|I-H\|_\infty\epsilon_k\tilde{c}_{n_1}(1+\gamma)\left(\|M\|_\infty+\|N\|_\infty\right),
\end{align}
and the other error caused by ZFP round-off,
\begin{align}
\Omega_{\text{ZFP}} :=\tilde{K}_\beta\gamma\left( \epsilon_k\tilde{c}_{n_1}\|I-H\|_\infty\|N\|_\infty+\|A\|_\infty \omega\right). 
\end{align}
Similar to Lemma \ref{lemma:forwarderrorbeta}, Lemma \ref{lemma:backwarderrorbeta} provides the minimum $\tilde{\beta}$ so that $\Omega_{\text{FP}} \geq \Omega_{\text{ZFP}}$. 
\begin{lemma}\label{lemma:backwarderrorbeta}
		Let $k \in \mathbb{N}$ represent the traditional floating-point precision. Assume $\tilde{c}_{n_1} \leq \frac{1}{\epsilon_k}$, then  $\Phi_{\text{FP}}\geq  \Phi_{\text{ZFP}}$ if 
	\begin{align}
	\tilde{\beta} \geq k - \log_2\left( \frac{1-\omega}{(1+2\omega)} \tilde{c}_{n_1}\left(\frac{4}{15}\right)^d \left(\frac{3}{8(2^d+1)}\right)\right) \label{eqn:betabackwardserror}
	\end{align}
	provided $\tilde{\beta} \geq \beta_j$ for all j, where $\beta_j$ is the number of bit planes kept at step $j$.
\end{lemma}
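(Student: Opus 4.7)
The approach is to mirror the structure of the proof of Lemma~\ref{lemma:forwarderrorbeta}: produce a clean lower bound on $\Omega_{\text{FP}}$ and a clean upper bound on $\Omega_{\text{ZFP}}$ in the same units, so that comparing the two reduces to an inequality in $\tilde{K}_\beta$ alone, which can then be solved for $\tilde{\beta}$ using $\tilde{K}_\beta = (15/4)^d (8/3)(2^d+1)\epsilon_{\tilde{\beta}}$.

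The plan is to first lower bound $\Omega_{\text{FP}}$. Dropping the ``$1$'' from $(1+\gamma)$ and applying the reverse triangle inequality $\|I-H\|_\infty \geq 1-\omega$ (valid since $\omega<1$), I would obtain
\begin{align*}
\Omega_{\text{FP}} \;\geq\; (1-\omega)\,\epsilon_k\,\tilde{c}_{n_1}\,\gamma\,(\|M\|_\infty+\|N\|_\infty).
\end{align*}

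Next, I would upper bound $\Omega_{\text{ZFP}}$. The triangle inequality gives $\|I-H\|_\infty \leq 1+\omega$ and $\|A\|_\infty = \|M-N\|_\infty \leq \|M\|_\infty+\|N\|_\infty$. Substituting these and invoking the hypothesis $\tilde{c}_{n_1}\leq 1/\epsilon_k$ to absorb the $\epsilon_k \tilde{c}_{n_1}$ factor in the first term,
\begin{align*}
\Omega_{\text{ZFP}} \;\leq\; \tilde{K}_\beta\,\gamma\,\bigl((1+\omega)\|N\|_\infty + \omega(\|M\|_\infty+\|N\|_\infty)\bigr)
\;=\; \tilde{K}_\beta\,\gamma\,\bigl((1+2\omega)\|N\|_\infty + \omega\|M\|_\infty\bigr).
\end{align*}
Since $\omega \leq 1+2\omega$, this is in turn bounded by $\tilde{K}_\beta\,\gamma\,(1+2\omega)(\|M\|_\infty+\|N\|_\infty)$.

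Combining these two bounds, $\Omega_{\text{FP}} \geq \Omega_{\text{ZFP}}$ is implied by
\begin{align*}
(1-\omega)\,\epsilon_k\,\tilde{c}_{n_1} \;\geq\; \tilde{K}_\beta\,(1+2\omega),
\end{align*}
after cancellation of the common factor $\gamma(\|M\|_\infty+\|N\|_\infty)$. Substituting the worst-case expression $\tilde{K}_\beta = (15/4)^d (8/3)(2^d+1)\epsilon_{\tilde{\beta}}$ with $\epsilon_k = 2^{1-k}$ and $\epsilon_{\tilde{\beta}} = 2^{1-\tilde{\beta}}$, and taking $\log_2$ yields the stated bound on $\tilde{\beta}$.

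The main obstacle, as in the forward-error lemma, is recognizing the right way to reassemble the two dissimilar terms in $\Omega_{\text{ZFP}}$ (one scaled by $\epsilon_k\tilde{c}_{n_1}\|N\|_\infty$ and the other by $\|A\|_\infty\omega$) into a common factor $(\|M\|_\infty+\|N\|_\infty)$ that also appears in the lower bound for $\Omega_{\text{FP}}$. The assumption $\tilde{c}_{n_1}\leq 1/\epsilon_k$ (which is weaker than the corresponding $\tilde{c}_{n_1}\leq 1/(\epsilon_k \kappa_\infty(M))$ assumption needed in Lemma~\ref{lemma:forwarderrorbeta}, because no $M^{-1}$ appears in $\Omega_{\text{FP}}$ or $\Omega_{\text{ZFP}}$) is exactly what is needed to eliminate the $\epsilon_k\tilde{c}_{n_1}$ pre-factor from the floating-point term of $\Omega_{\text{ZFP}}$ and thereby obtain the clean $(1+2\omega)$ coefficient in the final denominator.
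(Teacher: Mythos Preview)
Your proposal is correct and follows essentially the same strategy as the paper: lower bound $\Omega_{\text{FP}}$ via the reverse triangle inequality $\|I-H\|_\infty \geq 1-\omega$, upper bound $\Omega_{\text{ZFP}}$ using $\|I-H\|_\infty \leq 1+\omega$ and the hypothesis $\epsilon_k\tilde c_{n_1}\leq 1$, cancel a common positive factor, and solve $\tilde K_\beta(1+2\omega)\leq(1-\omega)\epsilon_k\tilde c_{n_1}$ for $\tilde\beta$. The only cosmetic difference is that the paper factors both bounds through $\|A\|_\infty$ (replacing $\|M\|_\infty+\|N\|_\infty$ by $\|A\|_\infty$ in $\Omega_{\text{FP}}$ and, less transparently, $\|N\|_\infty$ by $\|A\|_\infty$ in $\Omega_{\text{ZFP}}$), whereas you keep $\|M\|_\infty+\|N\|_\infty$ as the common factor throughout; your route is arguably cleaner since the step $\|N\|_\infty\leq\|A\|_\infty$ the paper implicitly uses is not obviously valid in general, while your use of $\|A\|_\infty\leq\|M\|_\infty+\|N\|_\infty$ always is.
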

\begin{proof}
	By applying the inequality $\|A\| \leq \|M\|+ \|N\|$ and apply the assumptions $\tilde{c}_{n_1} \leq \frac{1}{\epsilon_k}$ and $\omega<1$, we have that 
	\begin{align}
	\Omega_{\text{FP}}& \geq \|I-H\|_\infty\epsilon_k\tilde{c}_{n_1}\gamma\|A\|_\infty  \geq \left| \|I\|_\infty - \|H\|_\infty\right| \epsilon_k\tilde{c}_{n_1}\gamma\|A\|_\infty  \geq (1 - \omega) \epsilon_k\tilde{c}_{n_1}\gamma\|A\|_\infty 
	\end{align}
	and 
	\begin{align}
\Omega_{\text{ZFP}}\geq \tilde{K}_\beta\gamma\|A\|_\infty \left( \epsilon_k\tilde{c}_{n_1}(1+\omega) + \omega\right)\geq \tilde{K}_\beta\gamma\|A\|_\infty \left(1+2\omega \right)
	\end{align}
	Thus, if $
\tilde{K}_\beta \left(1+2\omega \right) \leq  (1 - \omega) \epsilon_k\tilde{c}_{n_1}
$,
	it follows that $\Omega_{\text{FP}}\geq  \Omega_{\text{ZFP}}$. Without loss of generality, $\tilde{K}_\beta = \left(\frac{15}{4}\right)^d\frac{8}{3}(2^d+1) \epsilon_{\tilde{\beta}}$, and the desired result follows. 
\end{proof}
Figure \ref{fig:contourbackward} represents $\tilde{\beta}$ as described in Equation (\ref{eqn:betabackwardserror}) as a function of $\tilde{c}_{n_1}$ and $\omega$ for the backward error analysis. Similar conclusions from Figure \ref{fig:contour} for the forward error analysis can be concluded for the backward error. However, for the backward error, we see that $\tilde{\beta}$ is much more sensitive to the spectral radius.
\begin{figure}
	\centering
	\begin{subfigure}[b]{0.4\textwidth}
		\includegraphics[width=\textwidth]{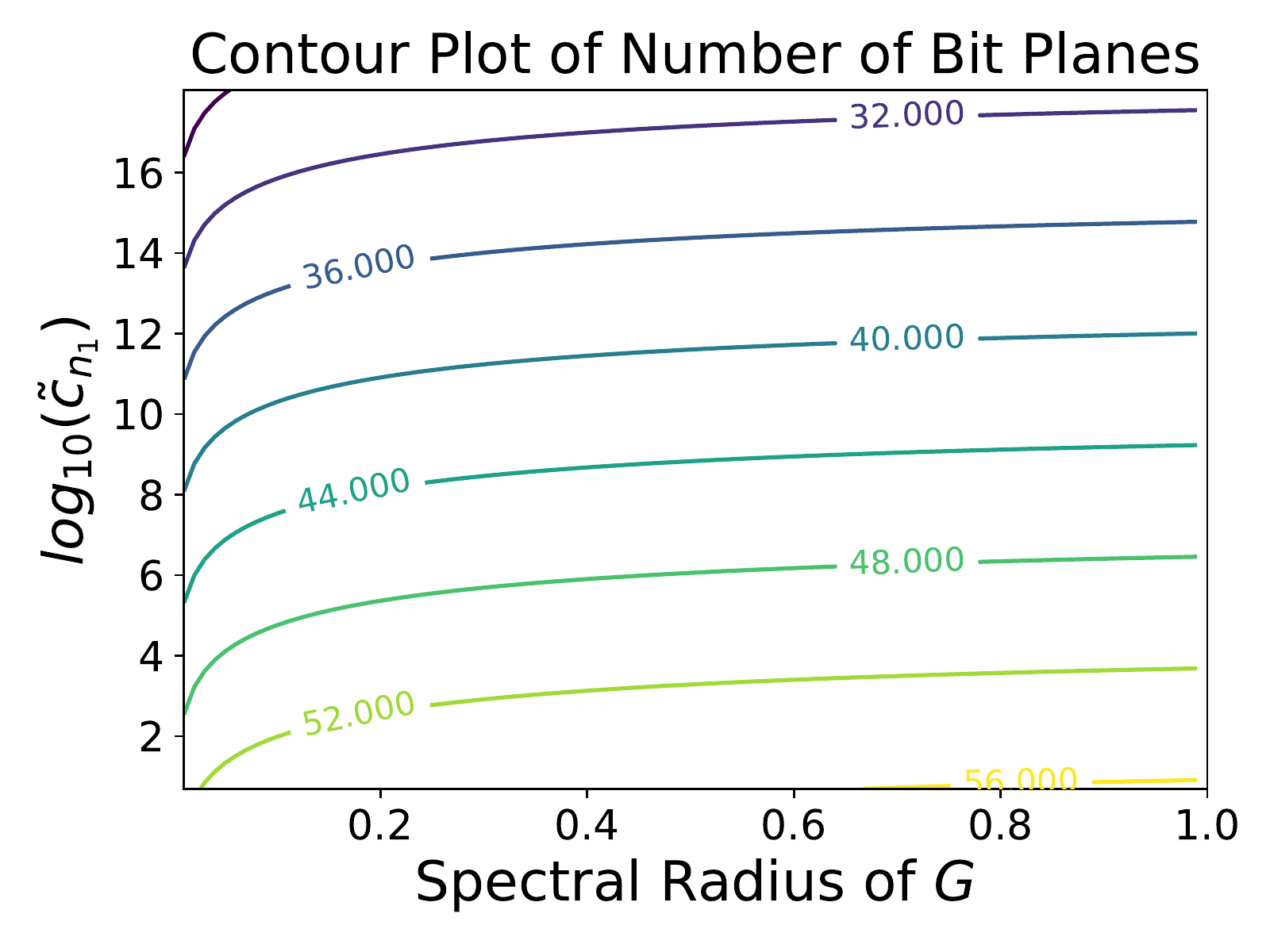}
		\caption{Forward Error}
	\label{fig:contour}
\end{subfigure}
\begin{subfigure}[b]{0.4\textwidth}
	\includegraphics[width=\textwidth]{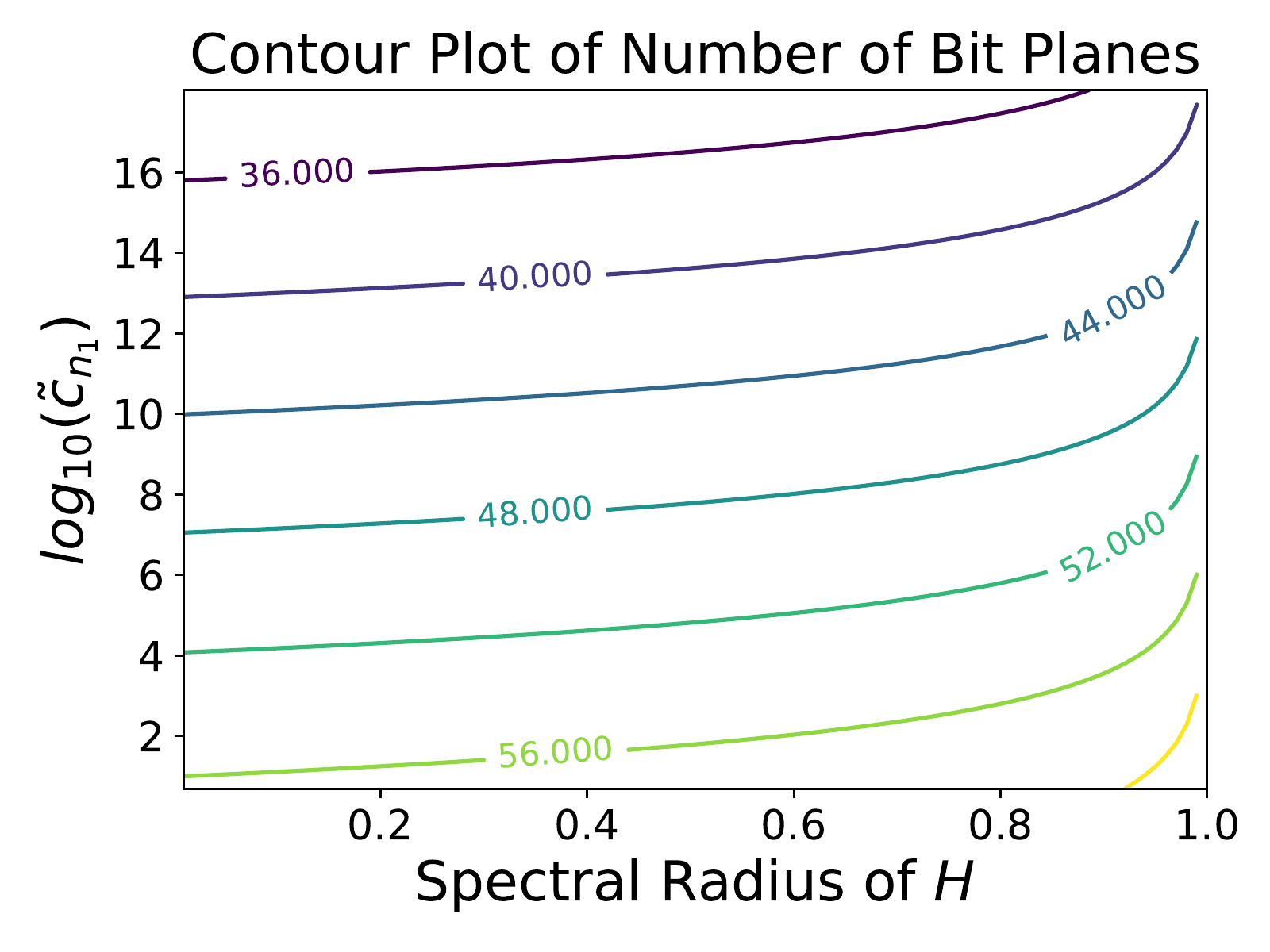}
		\caption{Backward Error}
	\label{fig:contourbackward}
\end{subfigure}
\caption{Contour plot of $\tilde{\beta}$, the number of bit-planes for ZFP, from Lemma \ref{lemma:forwarderrorbeta} as a function of $\tilde{c}_{n_1}$, the conditioning of the linear operator, and the spectral radius of either $G$ or $H$, respectively, as determined by the forward error or backwards analysis in double precision ($k = 53$). (a) Contour plot of $\tilde{\beta}$, as a function of $\tilde{c}_{n_1}$ and $\sigma$, the spectral radius of $G$. (b) Contour plot of $\tilde{\beta}$, as a function of $\tilde{c}_{n_1}$ and $\omega$, the spectral radius of $H$.}
\end{figure}
For both cases, the forward and backwards error for the sequence defined by (\ref{eqn:xn}) contains traditional floating-point arithmetic error and round off error caused by ZFP. Lemmas \ref{lemma:forwarderrorbeta} and \ref{lemma:backwarderrorbeta} provide the $\tilde{\beta}$ value needed to use ZFP in a stationary iterative methods while keeping the additional round-off error caused by ZFP less than the traditional round-off error caused by floating-point arithmetic. \red{The contour plots, shown in Figure \ref{fig:contour} and \ref{fig:contourbackward}, tell us a familiar story; the accuracy of the solution is dependent on the conditioning of the problem, thus, requiring a higher precision for our data representations than what is theoretically possible is fruitless. This is not a new phenomenon, but one that has been well-documented from studying floating-point arithmetic error \cite{higham2002accuracy}. Again, this section specifically studied the relationship between floating-point arithmetic error and ZFP round-off error. In many simulations, the floating-point arithmetic error is not the dominating error. We have shown that despite the possibilities of ZFP round-off error dominating floating-point arithmetic error, that as long as the iterative method is stable, we can bound the ZFP round-off error ensuring the method remains stable when used in conjunction with ZFP compressed data-types. } 

 Now that we have established bounds on the accumulated round-off error introduced by ZFP compression and decompression, we consider several numerical experiments to illustrate the tightness of these bounds. 

\section{Numerical Results}
\label{sec:results}
As observed in Section~\ref{sec:bounds}, repeated inline applications of ZFP with iterative methods will generate additional errors that can accumulate over time. The first two numerical experiments are designed to test how well the bounds established in Theorems \ref{thm:lip}  and \ref{thm:boundedIterative} capture the compression error introduced by ZFP for different values of $\beta$, the fixed precision parameter. In each simulation, the value of the fixed precision parameter is chosen ahead of time and is held constant throughout each simulation. \red{Again, due to hardware limitations, in order to apply the update, arithmetic operations are performed in IEEE double precision; that is, each ZFP solution vector is rounded to IEEE doubles, updated, and then the result is converted back to ZFP format.} \rededit{In each of the experiments, we will compare the additional error to the total numerical error to discuss the impact of the dominant error on the numerical solution, as an exact analytic solution is known. In particular, the total numerical error is determined by subtracting the exact analytic solution from the IEEE double precision solution. For our chosen experiments, the total numerical error will be dominated by truncation error.} We will observe that the round-off error introduced by ZFP will remain below the total numerical error for the correct choice of the fixed precision parameter and that our bounds fully encapsulate the additional error. The final numerical experiment is designed to illustrate Theorem \ref{thm:extraits}. Depending on the Lipschitz constant, the fixed precision parameter, and the number of iterations required to solve the problem without inline compression, we can bound the number of extra iterations required by the ZFP scheme in (\ref{ZFPscheme}) to achieve an iterate that is of similar accuracy to the exact solution of the non-compressed fixed point scheme in (\ref{FPscheme}). 

In the interest of using notation common to the PDE literature, the
notation we use in this section differs from the rest of the paper:
$x$ is a $d$-dimensional spatial variable, $t$ is the temporal variable, $n$ is the current iterate, $u$ the
continuous solution, and ${\bu}$ is an array used for
the uncompressed solution to the discretized diffusion equations. For each example, the finite difference scheme can be written as $\bu^{n+1} = A\bu^{n}$,
where $\bu^{n}$ is the solution calculated in double precision with no inline compression at time step $n$ and $A$ is the advancement operator.   Let
$\bv^{n+1} = A({D}{C}(\bv^{n}) )$, where $\bv^{0} = \bu^{0}$, denote the inline ZFP compression sequence and let 
$\hat{\bu}(x,t)$ denote the exact solution evaluated in double precision.  In Figures~\ref{fig:heat},~\ref{fig:heat3d64}-\ref{fig:heat3d64},~\ref{fig:laxwend},~\ref{fig:laxwend2d}, and~\ref{fig:poisson} the blue lines depict the error introduced by repeated compression of the solution at each time step relative to the true solution, 
$\| \bv^{n+1} - \bu^{n+1} \|_{\infty}/\|\hat{\bu}^{n+1}\|_\infty$. 
The green lines represent the total numerical error relative to the true solution, which, for the mesh
resolutions chosen, is dominated by truncation error, $\| \hat{\bu}^{n+1}- \bu^{n+1}  \|_{\infty}/\|\hat{\bu}^{n+1}\|_\infty$
, where 
$\hat{\bu}(x,t)$ is the exact solution 
evaluated in double precision. The red lines represent the theoretical
bound from either Theorem~\ref{thm:lip} or \ref{ZFPFixedPointThm}. Finally, the black line represents the round-off error caused by IEEE double precision representation relative to the true solution. Let $\tilde{\bu}^n$ represent the numerical solution calculated in 80-bit precision at iterate $n$; then the black line represents $\|\bu^{n+1} -\tilde{\bu}^{n+1}\|_\infty/\|\hat{\bu}^{n+1}\|_\infty$. We chose to present the total numerical error that is dominated by the truncation error as it is important to understand when the error caused by floating-point round off error or by the repeated use of ZFP, with appropriate parameters, is less than or greater than the dominating error. If the error caused by ZFP remains less than the total numerical error in our example the bits that represent the ZFP error are meaningless for the solution. Additionally, as ZFP is a compression algorithm, we also present the compression ratio, i.e., the ratio between the uncompressed size and compressed size at time $n$, for varying $\beta$ values. 
\subsection{Diffusion Example}
\label{sec:heat}
In the first example, we wish to test the bounds of the propagation of
the additional error introduced by ZFP in an iterative method, as
derived in Theorem~\ref{thm:lip}. The specific PDE we solve is 
\begin{equation}
\begin{array}{rclcrcl}
\partial_t u(x,t)   & = &a \nabla^2 u(x,t)   && (x,t) &\in& [0,1]^d \times (0,1] \equiv \Omega, \\
\end{array}
\end{equation}
with initial and boundary conditions, 
\begin{equation}
\begin{array}{rclcrcl}
u(x,t) &=& 0 \quad  \forall x  \in \delta\Omega, & \text{and} &u(x, t=0) =  \begin{cases} 
1 & x=\hat{x}  \\
0 & \text{otherwise}\\
\end{cases} & &
\end{array}
\end{equation}
for $d = 2$, $\hat{x}  = (1/2, 1/2)$, and constant $a = 1 $. Using a forward time and central space finite
difference approximation, the update is given by
\begin{align}u_{i,j}^{n+1} = u_{i,j}^n +a \Delta t \left( \frac{u_{i,j+1}^n-2u_{i,j}^n+u_{i,j-1}^n }{\Delta x_1^2 } +\frac{u_{i+1,j}^n-2u_{i,j}^n+u_{i-1,j}^n}{\Delta x_2^2 }\right ).  \end{align}
We choose a reasonable set of discretization parameters, $i,j = 1, ..., 99$, $\Delta x_1 = \Delta x_2 = 1/100$, $\Delta t= 0.0003125$, and the Dirichlet periodic boundary conditions are enforced by $u_{i,0}^n=u_{0,j}^n = 0$, for all $i,j$. The finite difference scheme can be written as $\bu^{n+1} = A\bu^{n}$,
where $\bu^{n} = [ u_{i,j}^n ]^{{T}}$ is the solution calculated in double precision with no inline compression at time step $n$ with Lipschitz
constant $L_l = \|A\|_\infty = 1$. Let
$\bv^{n+1} = A({D}{C}(\bv^{n}) )$, where $\bv^{0} = \bu^{0}$, denote the inline ZFP compression sequence.  In this example, shown in Figure \ref{fig:heat}, the red lines represent the theoretical
bound from Theorem~\ref{thm:lip},
\begin{equation}
\sum_{j = 0}^{n} L_l^{n-j+1} K_{\beta_j} \|\bv^{j}\|_{\infty},
\end{equation}
where $\beta_j = \{64, 59,44,32,16\}$ is constant for each experiment
presented.

\begin{figure}[h!]
	\centering
	\begin{subfigure}[b]{0.4\textwidth}
		\includegraphics[width=\textwidth]{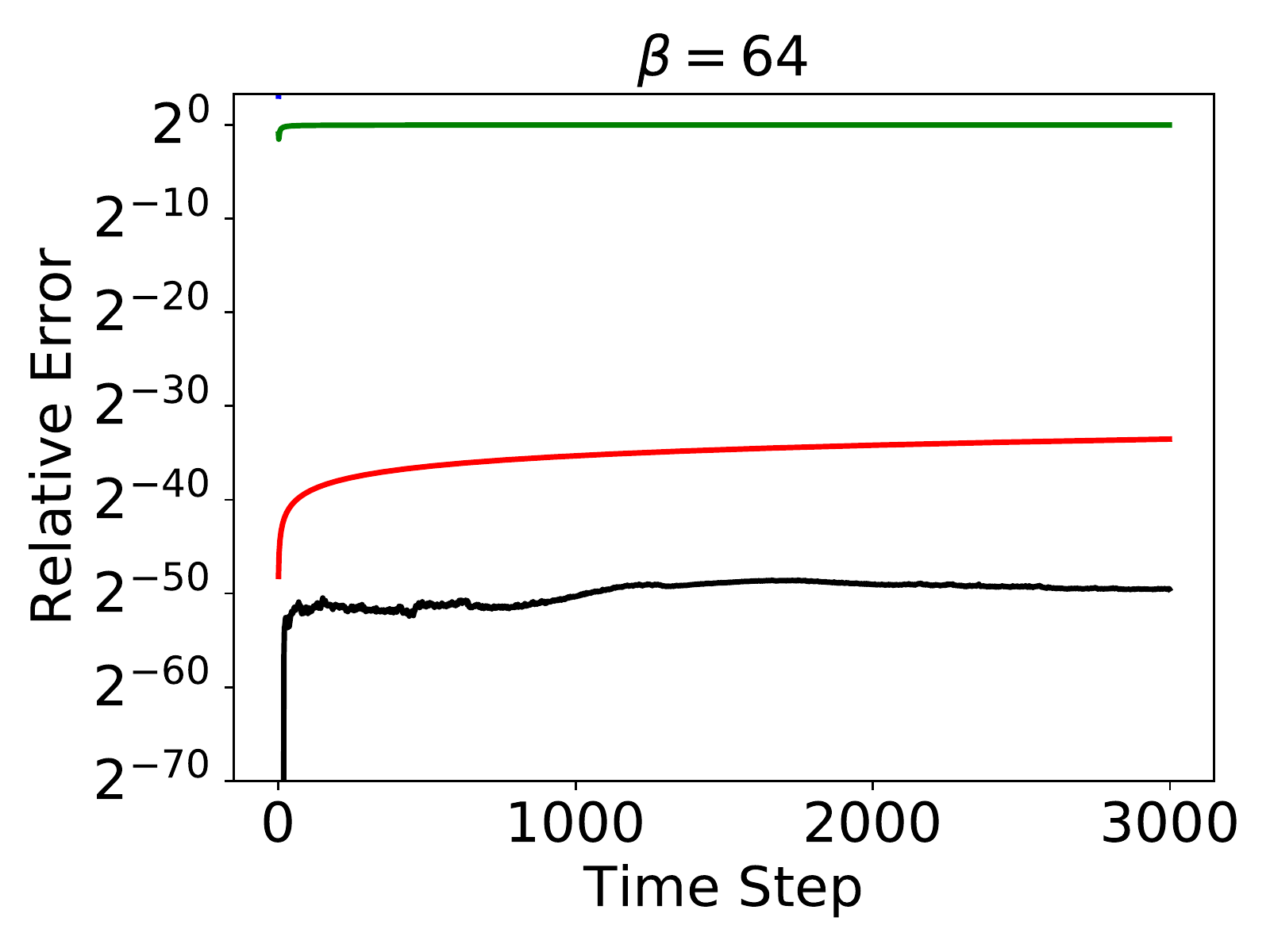}
		\caption{$\beta= 64$}
		\label{fig:heat64}
	\end{subfigure}
	\begin{subfigure}[b]{0.4\textwidth}
		\includegraphics[width=\textwidth]{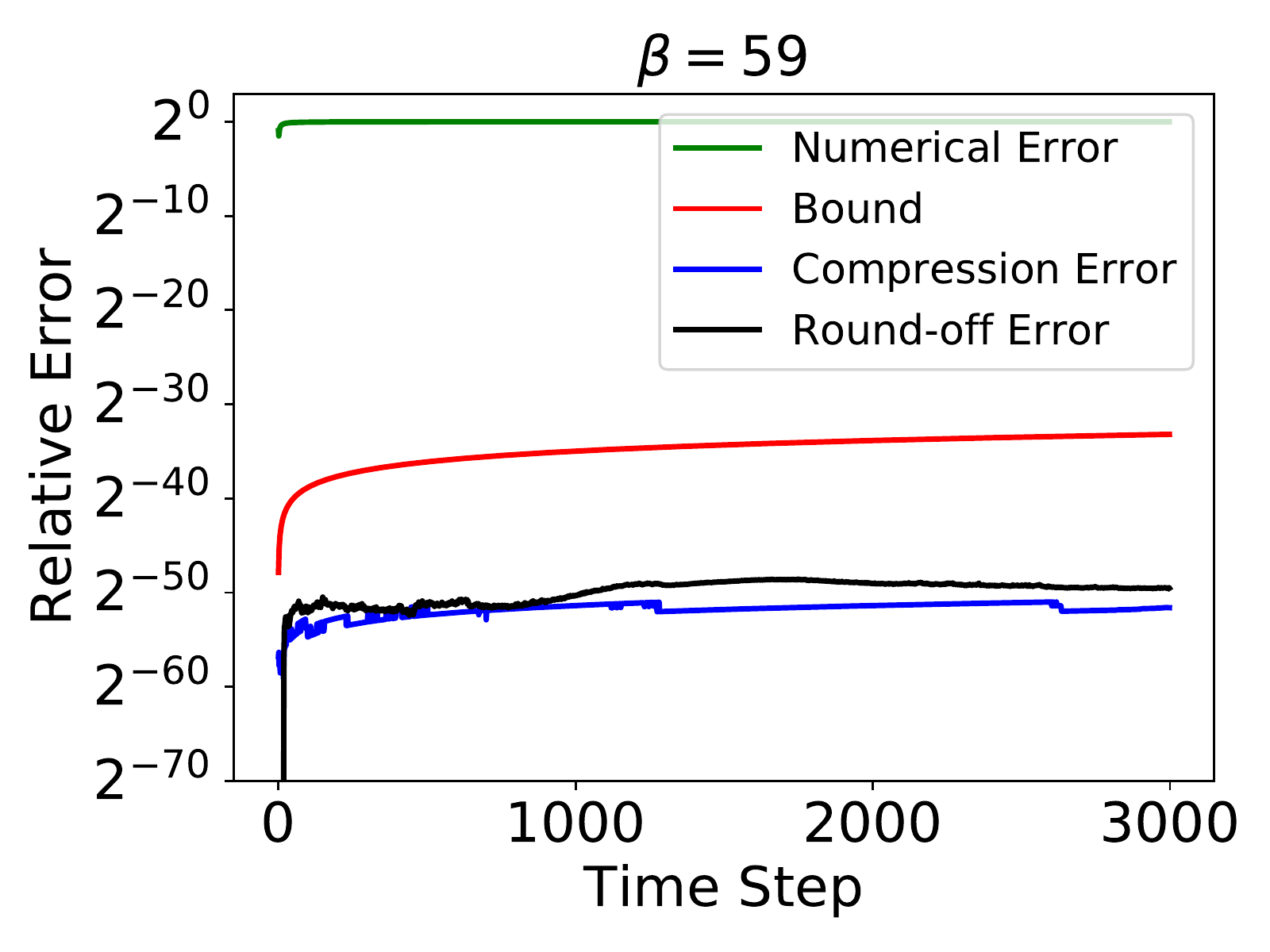}
		\caption{$\beta = 59$}
		\label{fig:heat59}
	\end{subfigure}
	\begin{subfigure}[b]{0.4\textwidth}
		\includegraphics[width=\textwidth]{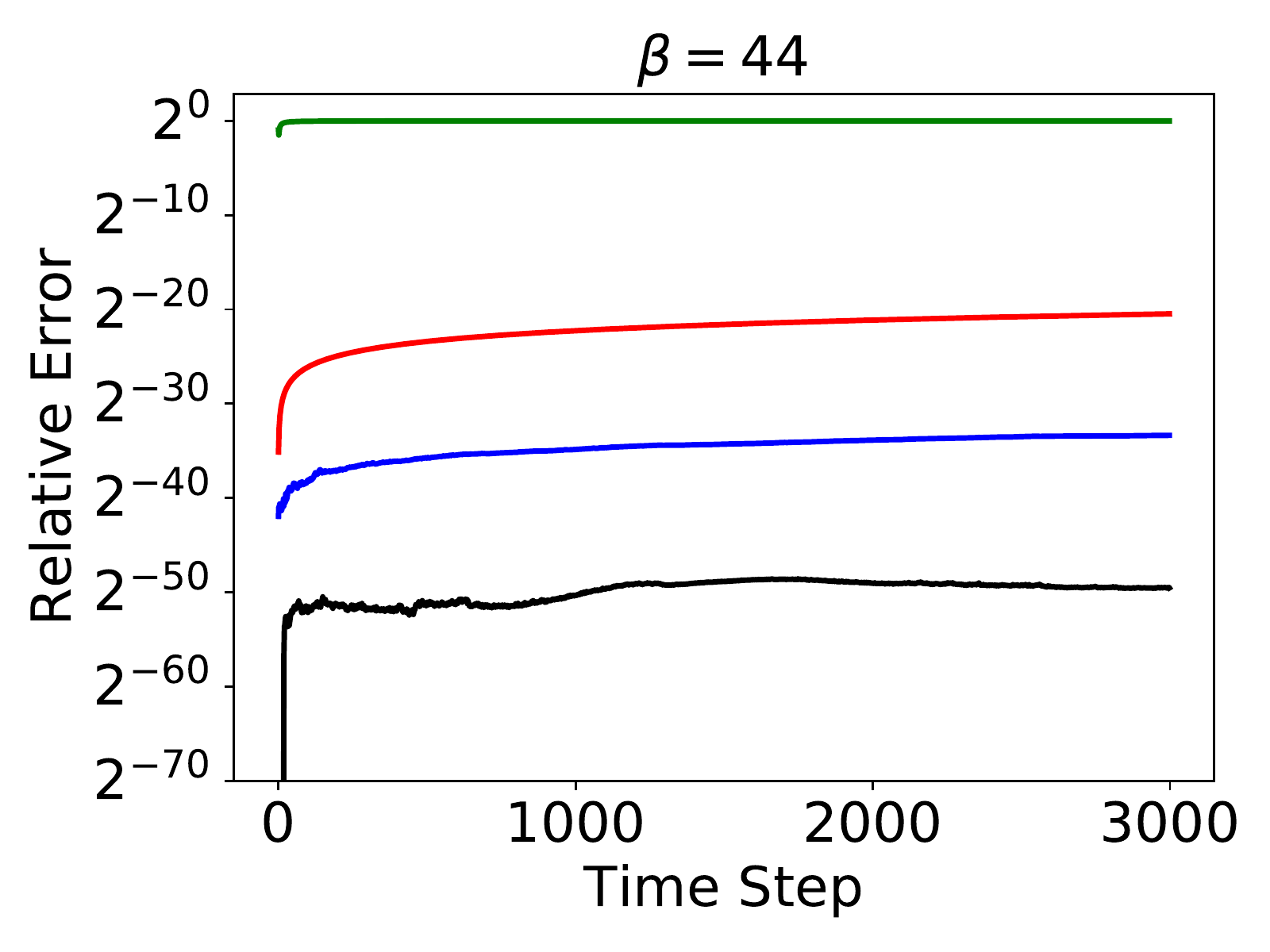}
		\caption{$\beta = 44$}
		\label{fig:heat44}
	\end{subfigure}
	\begin{subfigure}[b]{0.4\textwidth}
		\includegraphics[width=\textwidth]{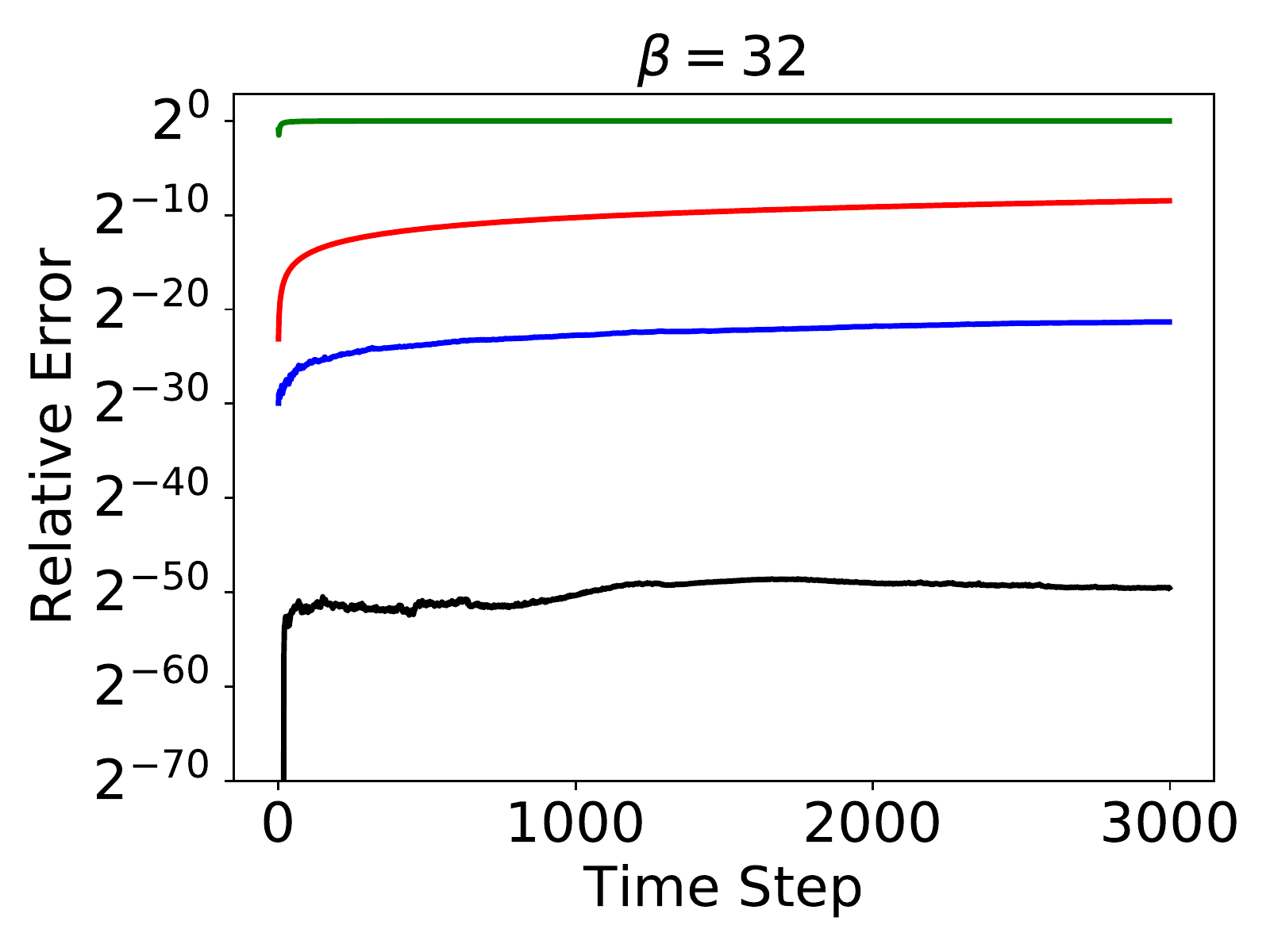}
		\caption{$\beta = 32$}
		\label{fig:heat32}
	\end{subfigure}
	\begin{subfigure}[b]{0.4\textwidth}
		\includegraphics[width=\textwidth]{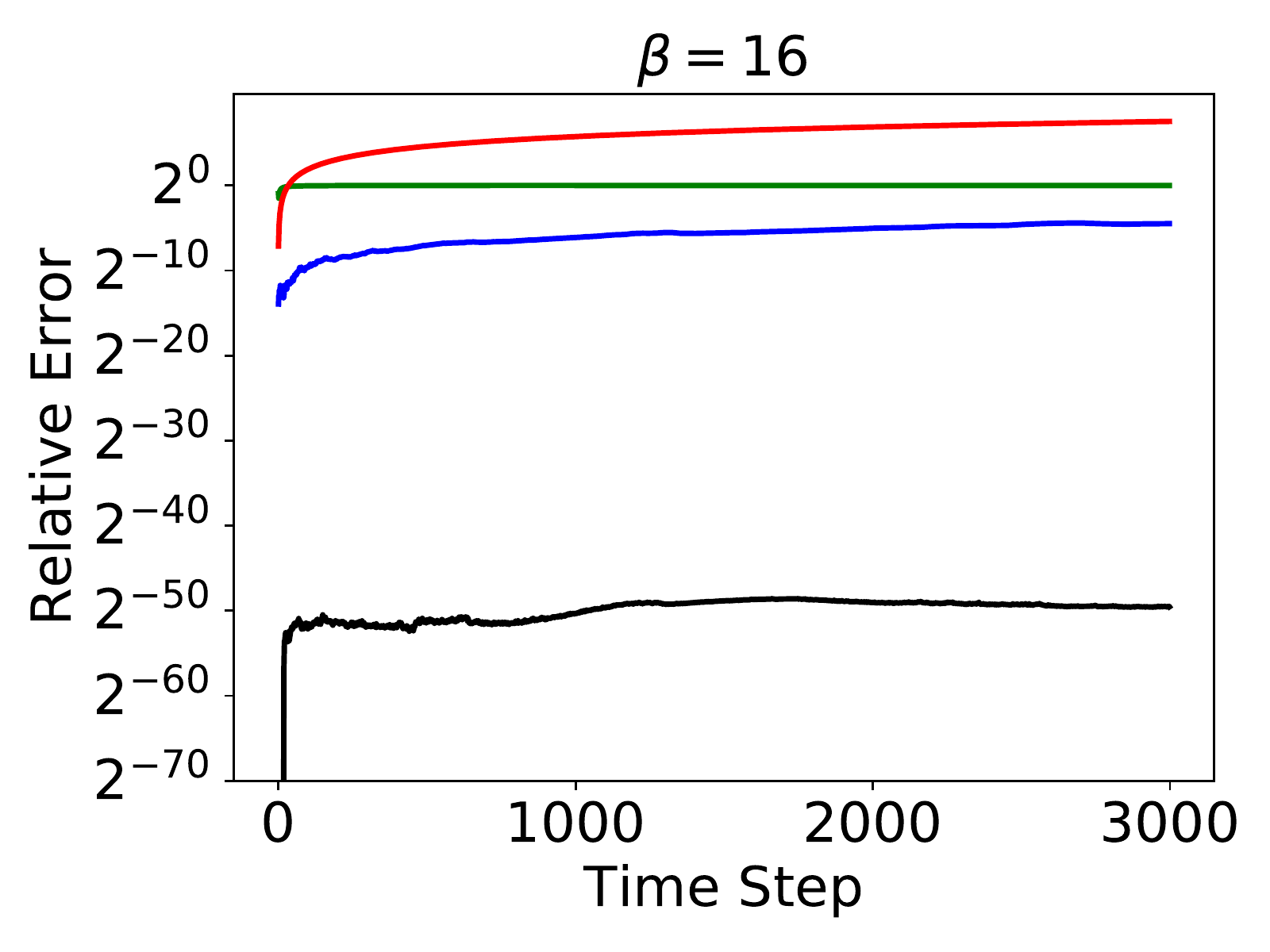}
		\caption{$\beta = 16$}
		\label{fig:heat16}
	\end{subfigure}
	\begin{subfigure}[b]{0.4\textwidth}
			\includegraphics[width=\textwidth]{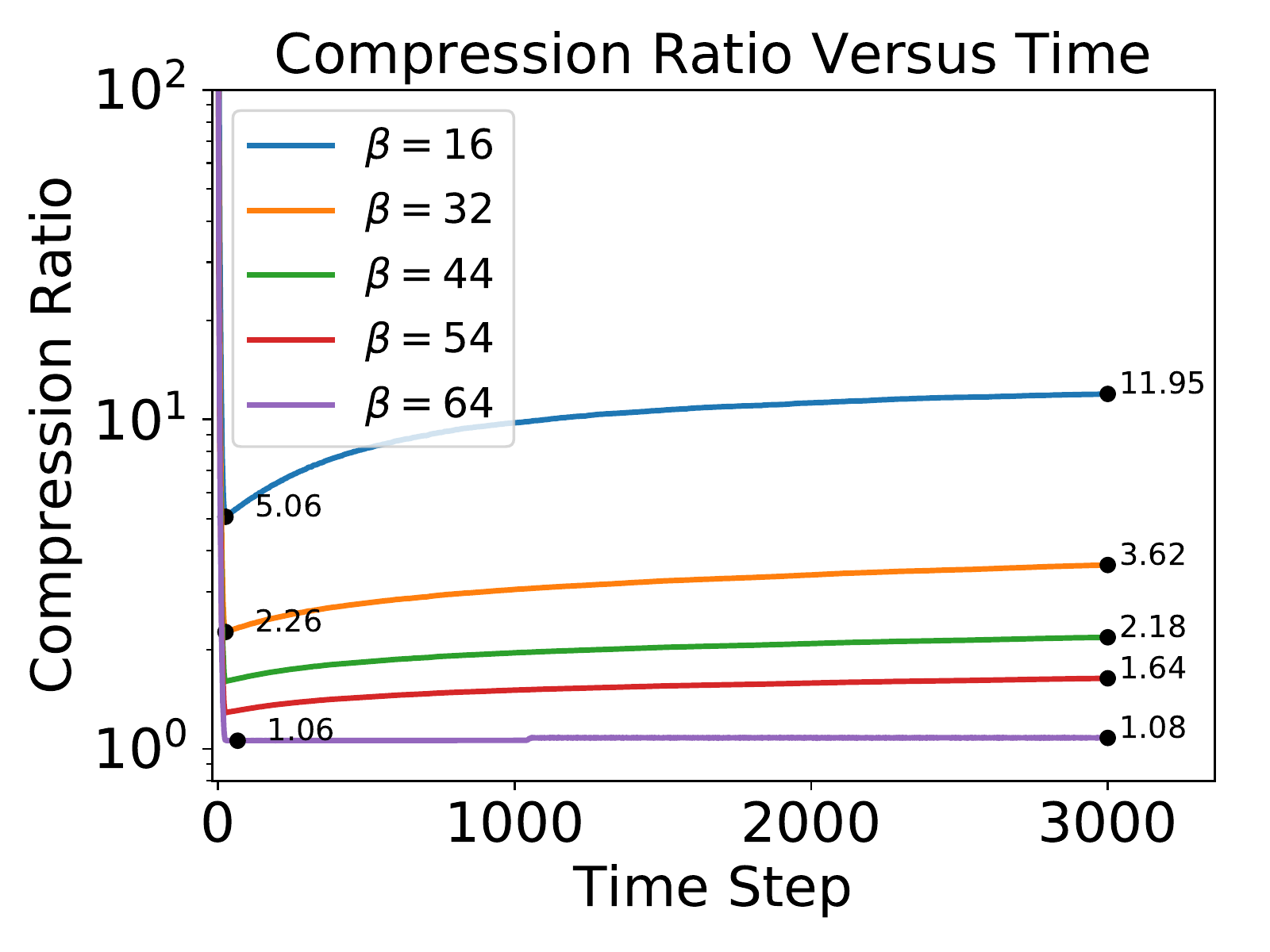}
		\caption{Compression Ratio}
		\label{fig:heatcompression}
	\end{subfigure}
	\caption{2D Diffusion Example: The blue line represents the compression error caused by ZFP, the green line represents the truncation-dominated total error, the red line represents the theoretical bound, and the black line represents the IEEE double precision floating-point round-off error. 
	}
	\label{fig:heat}
\end{figure}

For all $\beta_j$, the theoretical prediction bounds the relative error caused by the compression. As $\beta_j$ decreases, the relative round-off error from ZFP compression and the theoretical bound both increase slowly during the time-stepping iterations and trend in a similar way. The truncation-dominated numerical error is well above the compression error caused by ZFP, which means that the error is still dominated by the error caused by discretization. When $\beta = 64$, the error caused by compression is lower than the error caused by the double precision round-off \red{estimated by computing the solution using 80-bit precision}, i.e., the total floating-point arithmetic error. \red{Even though $\beta = 64$, the ZFP data-type is using fewer bits, as ZFP uses an encoding scheme to represent the values more efficiently.} From Lemma \ref{lemma:forwarderrorbeta}, assuming $\tilde{c}_{n_1} \approx 10,000$, i.e., the number of unknowns which can be used as an estimate of the condition number, we must have $\tilde{\beta}\geq 44 $ for the round-off caused by ZFP to be less than the total floating-point arithmetic error using the forward error analysis. However, we see this is not the case, in Figure \ref{fig:heat44}. As $A$ is a sparse well conditioned matrix, $\tilde{c}_{n_1} \approx 10,000$ is likely a pessimistic approximation. If instead we assume $\tilde{c}_{n_1} \approx 1$, the most optimistic approximation, then from Lemma \ref{lemma:forwarderrorbeta}, we must have $\tilde{\beta} \geq 59$, which is consistent with Figure \ref{fig:heat59}. For each iterate, the error caused by ZF remains below the floating-point arithmetic error validating Lemma \ref{lemma:forwarderrorbeta}. If instead we were concerned with the total numerical error, we can use Lemma \ref{lemma:truncerror2} to estimate the value $\tilde{\beta}$ that will result in the error bound to be approximately the same magnitude as the truncation-dominated numerical error. The numerical error from the numerical experiment can be concluded to be $\approx 2^{-14}$ at $n =3000.$ From Lemma \ref{lemma:truncerror2}, we have that $\tilde{\beta} \geq 32$, which is depicted in Figure \ref{fig:heat32}. However, the actual accumulated error caused by ZFP will be much less than  $ 2^{-14}$.  We can conclude that even though ZFP is introducing error into the numerical simulation, the method remains stable, as the theorems predict. When $\beta_j = 16$, depicted in Figure \ref{fig:heat16}, the total numerical error still dominates the ZFP round-off error. \red{Our bound, though it encapsulates the introduced error, is more conservative than is necessary; 16 bit planes are sufficient to obtain a solution with the same level of total numerical error.}

Figure \ref{fig:heatcompression} presents the compression ratio, i.e., the ratio between the uncompressed size and compressed size at time $n$, for varying $\beta$ values.  For the first few time steps, ZFP is able to efficiently compress the data as there is a single point heat source while the remaining domain is approximately zero. However, there is a sharp decrease in the compression ratio until approximately  $n\approx 80$, where the minimum compression ratio is attained for each $\beta$ value. The minimum compression ratio is 1.09 at $n = 83$ with $\beta = 64$. Then as $n \geq 83$ increases, the compression ratio increases. The final compression ratio at $n= 3000$ is plotted for each $\beta$ value. For $\beta = 64$ the final compression ratio is 1.31. 

%

ZFP is known to produce higher compression ratios for higher dimensions. Thus, we perform the same numerical example in three dimensions 
using the same setup as before with  $\Delta x_1 = \Delta x_2 =\Delta x_3 = 1/40$, and $\Delta t= 2.5\times 10^{-5}$. 
The Dirichlet periodic boundary conditions are enforced by $u_{i,j,0}^n=u_{0,j,h}^n =u_{i,0,h}^n= 0$, for all $i,j,h$, and the initial condition is $u_{19,19,19} = 1$. Figure \ref{fig:heat3d} displays similar results as the 2D for the 3D diffusion example, and similar conclusions from the 2D case can be drawn. The total numerical error from the numerical experiment can be concluded to be $\approx 2^{-14}$ at $n =3000.$ Similarly, from Lemma \ref{lemma:truncerror2}, we need $\tilde{\beta} \geq 32$ for the ZFP round-off error to remain below the numerical error, which is depicted in Figure \ref{fig:heat3d32}.  Figure \ref{fig:heat3dcompression} presents the compression ratio for varying $\beta$ values. As expected, the compression ratios tend to be higher in the 3D case than the 2D case (although not substantially).
\begin{figure}[h!]
	\centering
	\begin{subfigure}[b]{0.4\textwidth}
		\includegraphics[width=\textwidth]{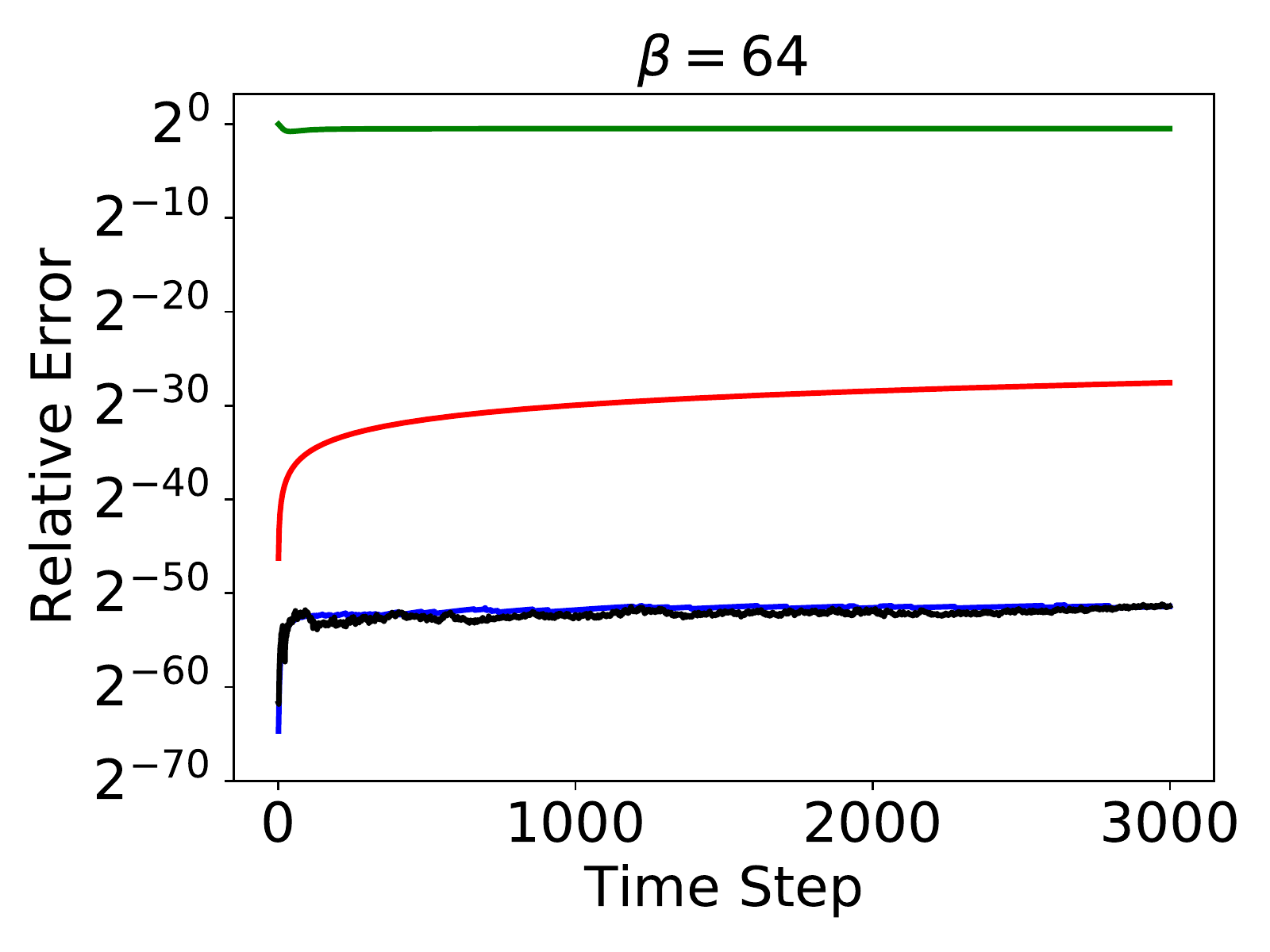}
		\caption{$\beta= 64$}
				\label{fig:heat3d64}
	\end{subfigure}
	\begin{subfigure}[b]{0.4\textwidth}
		\includegraphics[width=\textwidth]{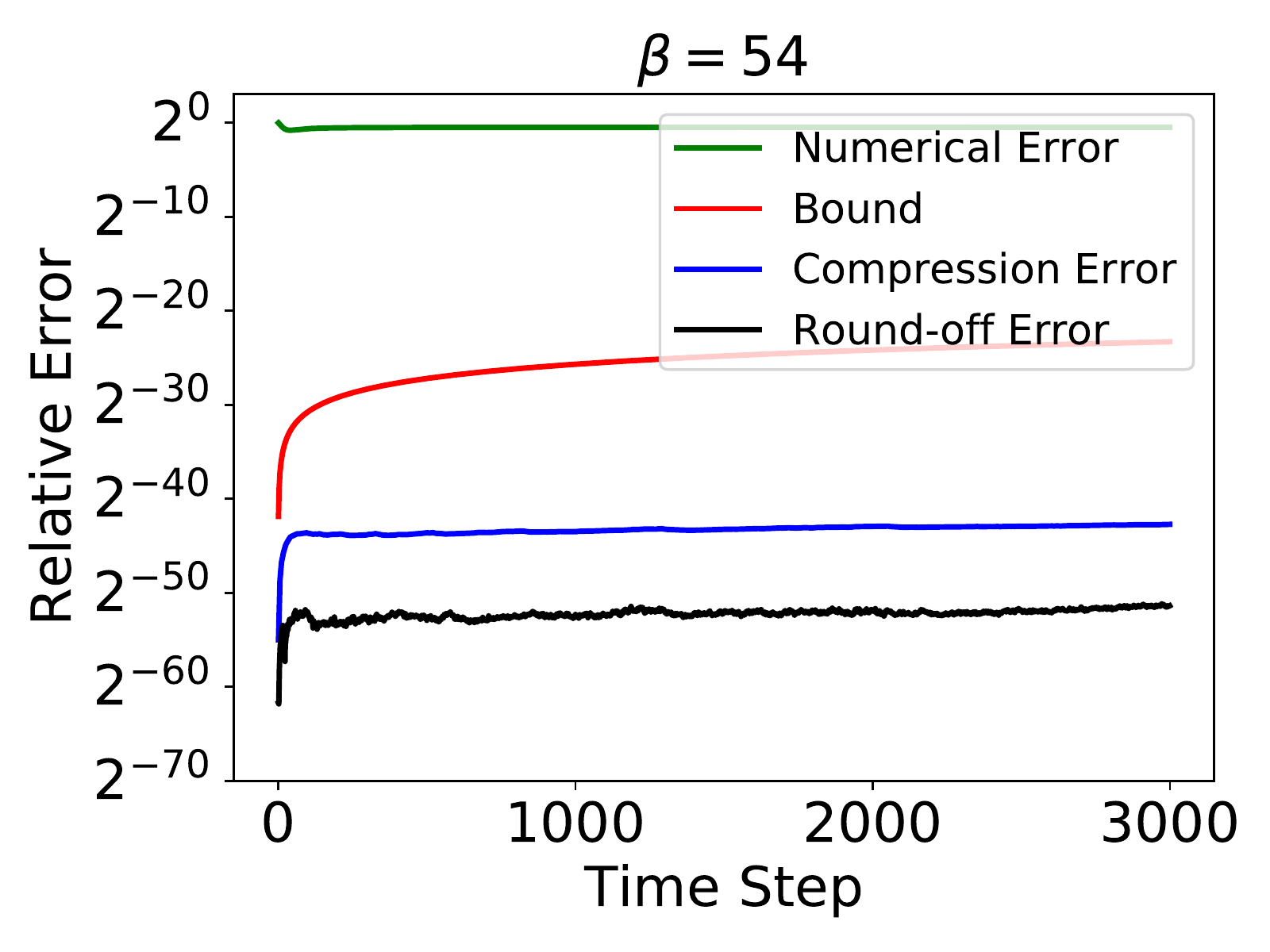}
		\caption{$\beta = 54$}
	\end{subfigure}
	\begin{subfigure}[b]{0.4\textwidth}
		\includegraphics[width=\textwidth]{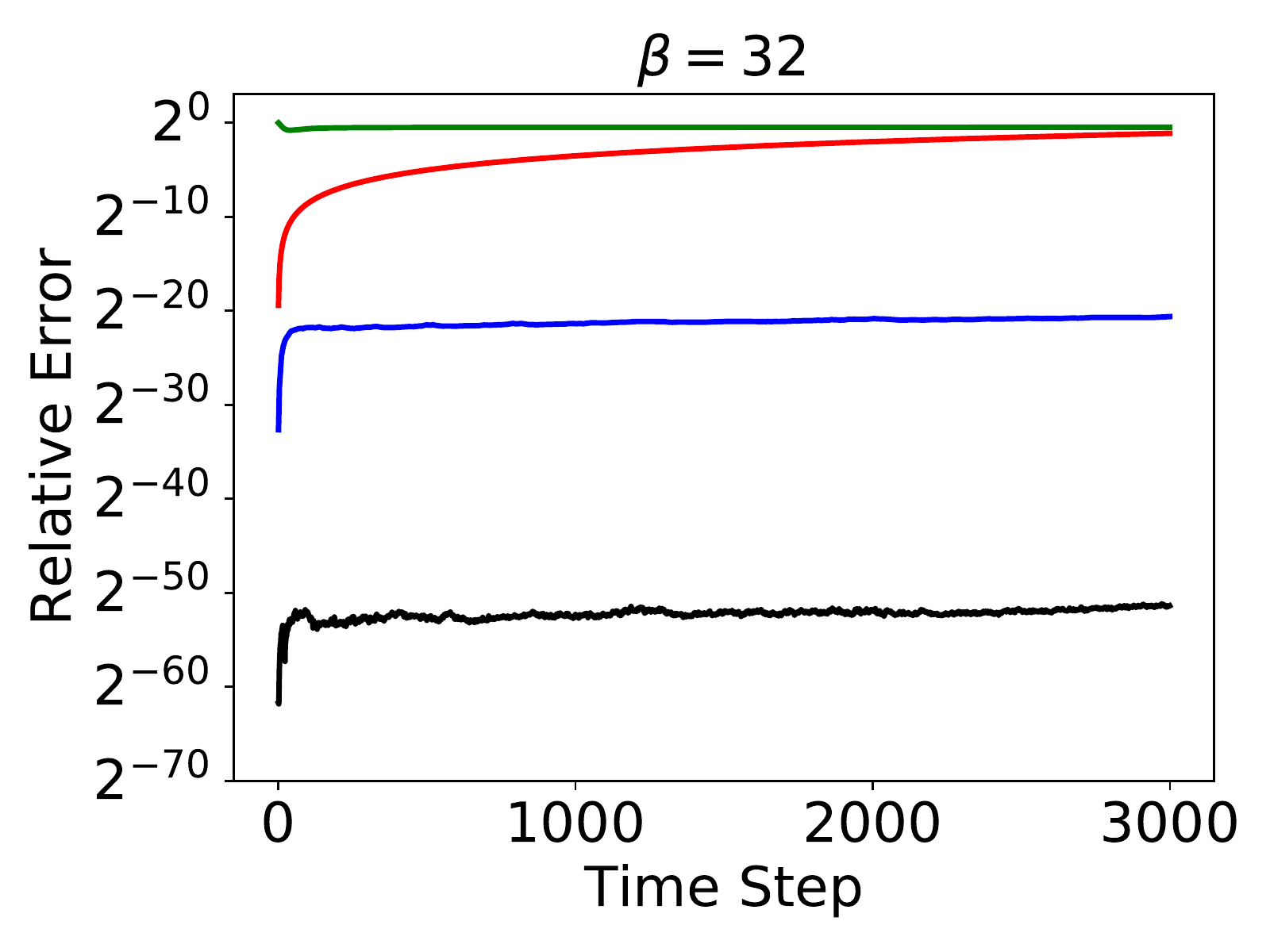}
		\caption{$\beta = 32$}
		\label{fig:heat3d32}
	\end{subfigure}
	\begin{subfigure}[b]{0.4\textwidth}
		\includegraphics[width=\textwidth]{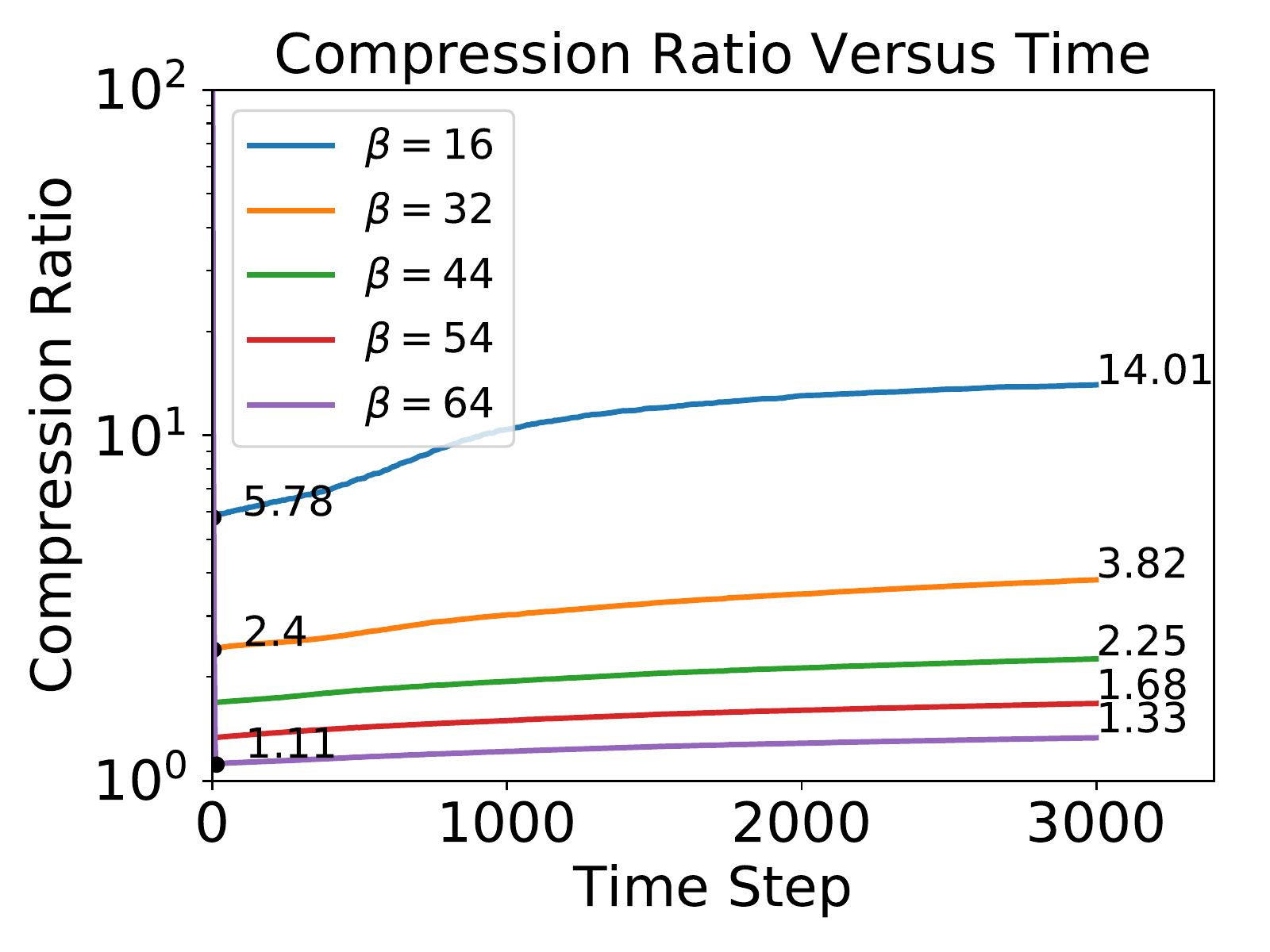}
		\caption{Compression Ratio}
		\label{fig:heat3dcompression}
	\end{subfigure}
	\caption{{3D Diffusion Example{: Figure (a-e) The blue line represents the compression error caused by ZFP, the green line represents the truncation-dominated total error, the red line represents the theoretical bound, and the black line represents the IEEE double precision round-off error for $\beta = \{64,54,44,32,16\}$ (f)} The compression ratio for varying $\beta = \{64,32,26,16\}$ is displayed as a function of time.} }
	\label{fig:heat3d}
\end{figure}

\subsection{Advection Example}
\label{sec:laxwend}
In our next example, we have selected the hyperbolic advection
equation in 1D, as any error generated by ZFP compression will be less damped
than in a system that involves explicit diffusion.    
Classical discretizations of this PDE, such as the Lax-Wendroff method, contain a
small amount of implicit dissipation that helps to stabilize the
scheme. However, the leading-order truncation error in this case is actually dispersive.  
Solving a discrete system with weak diffusion results in error that may
grow over time like the worst-case theoretical bound of the iterative
method expressed in (\ref{thm:lip}) does, although the rate of
growth is typically nowhere near the worst case for reasonably smooth
initial data.

The specific PDE we solve in $d$ dimensions, with its initial and boundary conditions, is
\begin{equation}
\begin{array}{rclcrcl}
\partial_t u(x,t) +a \nabla u(x,t)  & = & 0,  && (x,t) &\in& [0,1]^d \times (0,1], \\
u(x+me_d,t) & = & u(x,t), &&m& =&  0,\pm 1,\pm 2,\dots \\ 
u(x, t=0) & = & f(x), && 
\end{array}
\end{equation}
%
for constant $a$, where $e_d$ is the Cartesian unit vector in the $d^{th}$ direction. For $d= 1$, the general analytic solution is $u(x,t) = f(x - at)$, representing a
wave moving in the positive $x$-direction when $a>0$.  The Lax-Wendroff
scheme is a second-order accurate finite differencing method, expressed
as   
\begin{align} u_i^{n+1} = u_i^n -a\frac{\Delta t}{2\Delta x } \left [ u_{i+1}^n - u_{i-1}^n \right ] +a^2 \frac{\Delta t^2}{2\Delta x^2 } \left [ u_{i+1}^n -2u_i^n+ u_{i-1}^n \right ],  \end{align}
for $i = 1, ..., 100$ and periodicity enforced by $u^n_0:=u^n_{100}$ and
$u^n_{101} := u^n_1$.   

For initial condition $u(x,0) = \sin(2\pi x)$, let $a =1$, $\Delta x$ = 1/90, $\Delta t = 1/100$, such that the CFL
number is $\sigma=a\Delta t /\Delta x = 9/10$. 
The Lax-Wendroff scheme can again be written as $\bu^{n+1} = A\bu^{n}$,
where $\bu^{n} = [ u_1^n, \cdots, u_{100}^n]^{{T}}$.  In this case, the Lipschitz 
constant is $\|A\|_\infty =|1-\sigma^2|+|\sigma|\approx 1.09 $, so for any $0<\sigma<1$, $\|A\|_\infty>1$, the bound of Theorem~\ref{thm:lip} will
grow exponentially with each time step (iteration).  However, Theorem \ref{thm:boundedIterative} only depends on the Kreiss constant, which is more appropriate. It is known that for the Lax-Wendroff scheme the Kreiss constant is $ L_k = 2$ (\cite{kriess}, p.89); thus, we can investigate the validity of Theorem \ref{thm:boundedIterative}. 
In this example, the red lines represent the theoretical
bound from Theorem \ref{thm:boundedIterative}, 
\begin{equation}
\sum_{j = 0}^{n} L_k K_{\beta_j} \|\bu^{j}\|_{\infty},
\end{equation}
where $\beta_j = \{ 64,32,24,16 \}$ is constant for each experiment
presented in Figure \ref{fig:laxwend}.
\begin{figure}[h!]
	\centering    
	\begin{subfigure}[b]{0.4\textwidth}
		\includegraphics[width=\textwidth]{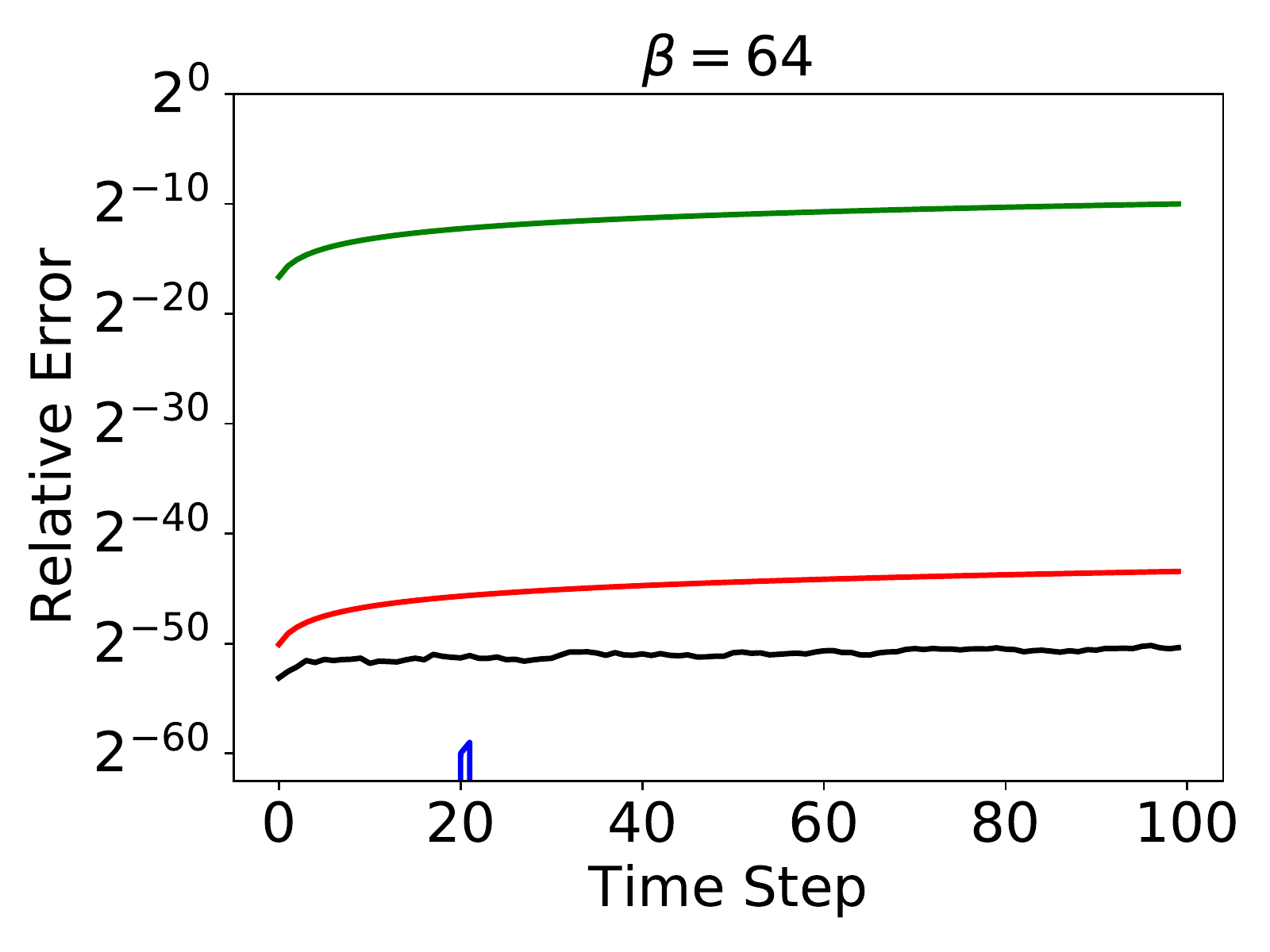}
		\caption{$\beta= 64$}
	\end{subfigure}
	\begin{subfigure}[b]{0.4\textwidth}
		\includegraphics[width=\textwidth]{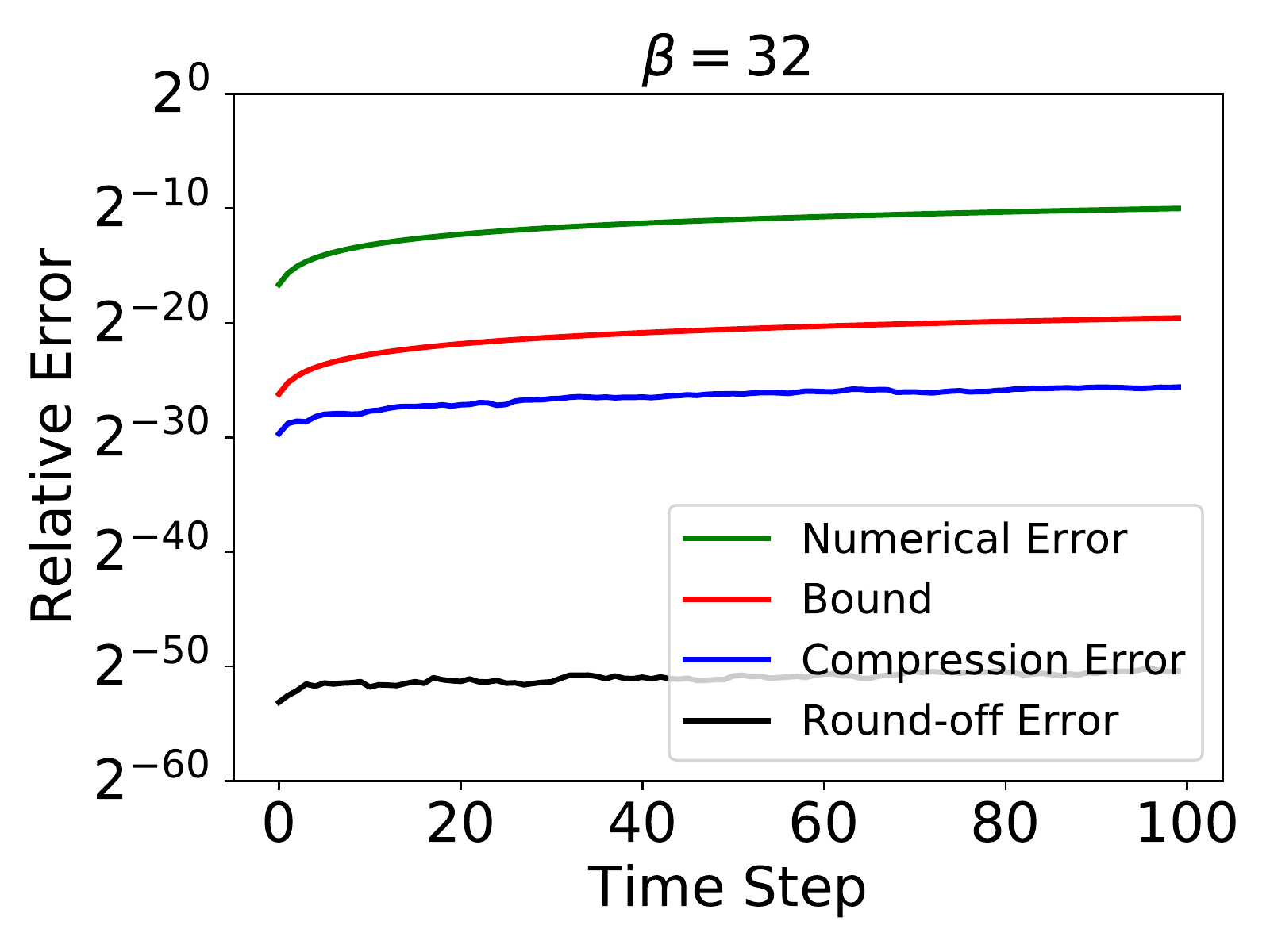}
		\caption{$\beta = 32$}
	\end{subfigure}
	\begin{subfigure}[b]{0.4\textwidth}
		\includegraphics[width=\textwidth]{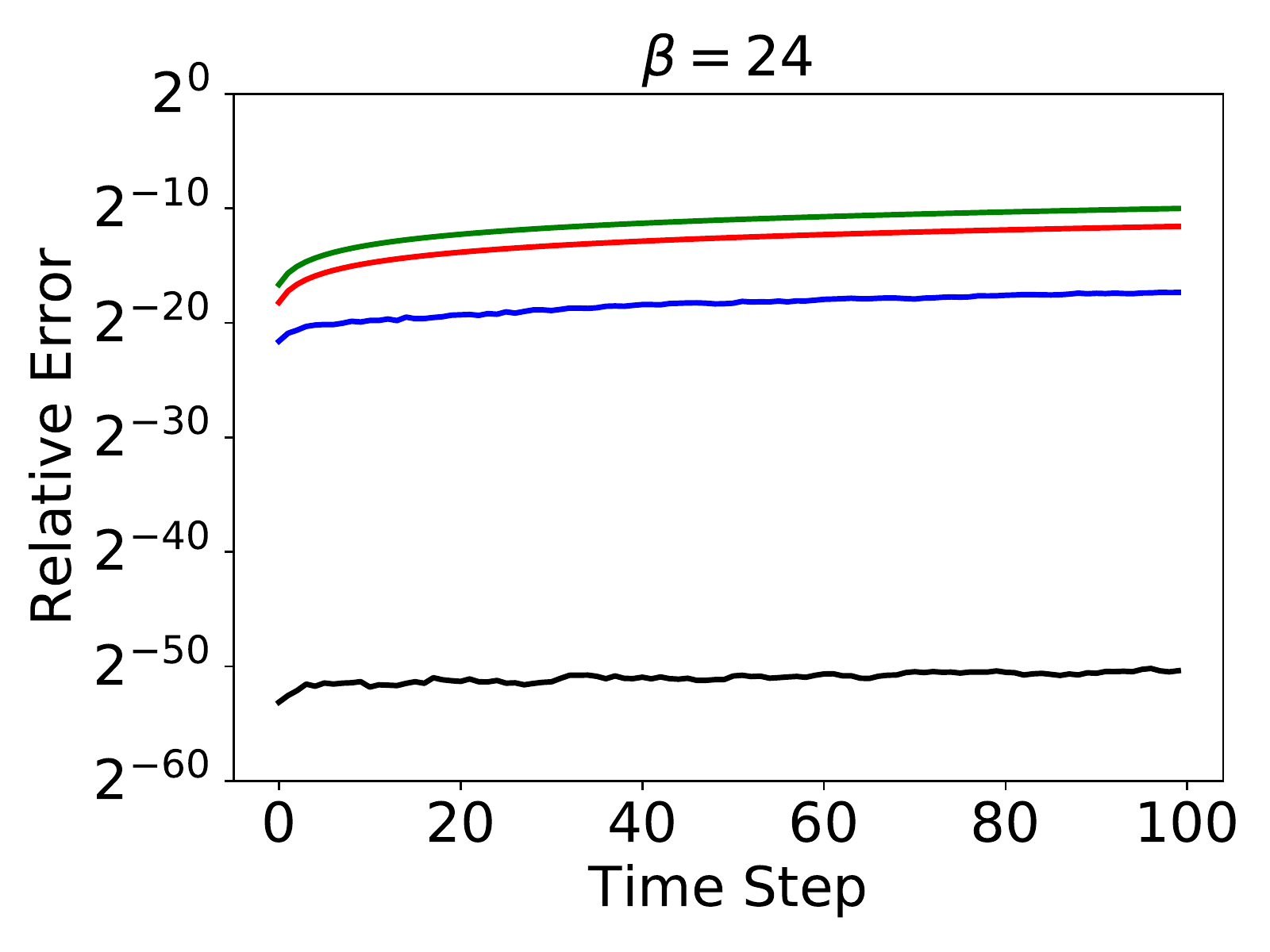}
		\caption{$\beta= 24$}
		\label{fig:laxwend26}
	\end{subfigure}
	\begin{subfigure}[b]{0.4\textwidth}
		\includegraphics[width=\textwidth]{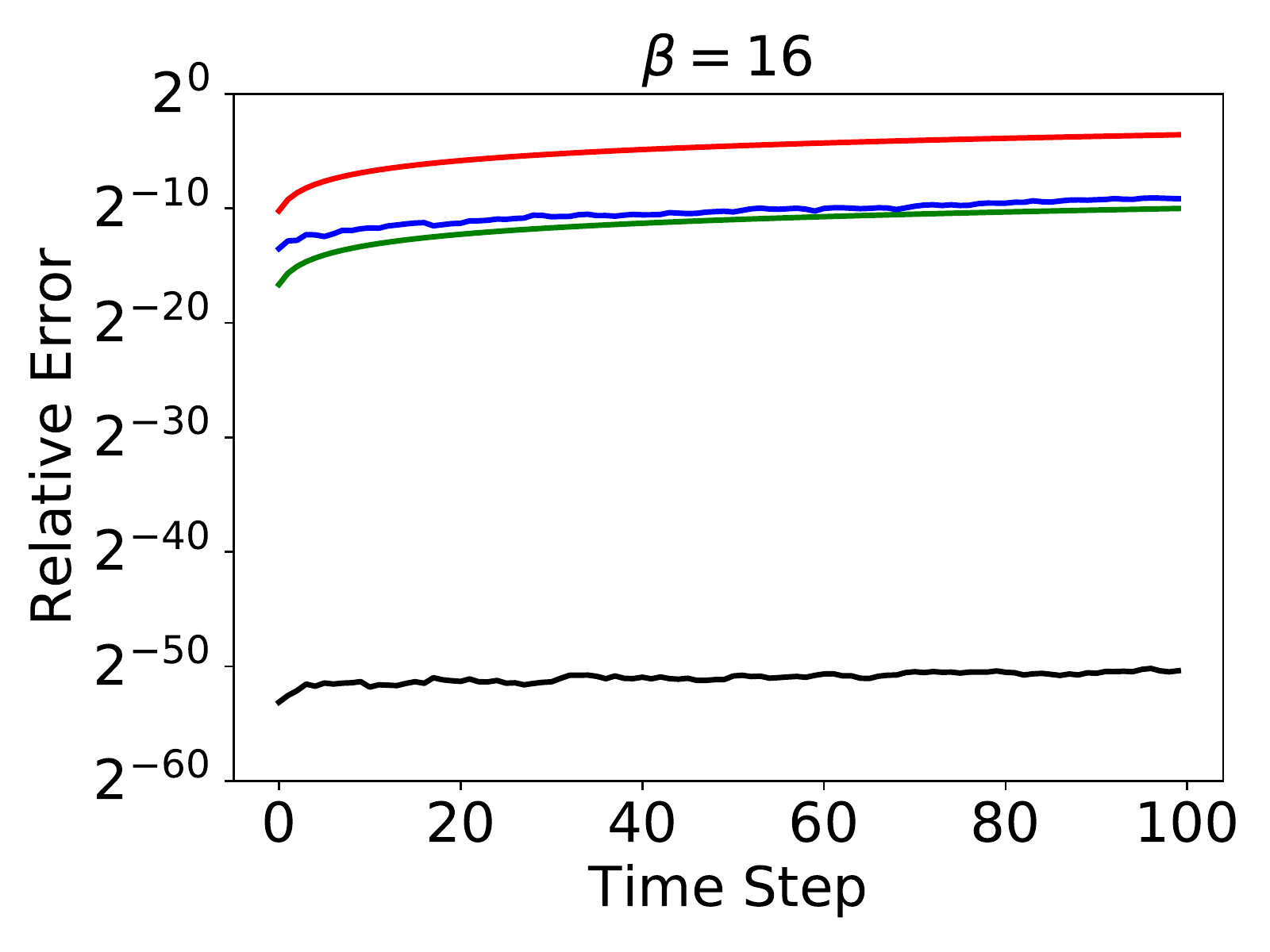}
		\caption{$\beta = 16$}
	\end{subfigure}
	\caption{{1D Lax-Wendroff Example{: The blue line represents the compression error caused by ZFP, the green line represents the truncation-dominated total error, the red line represents the theoretical bound, and the black line represents the IEEE double precision round off error estimated by using a solution calculated in  $80$-bit precision.} } }
	\label{fig:laxwend}
\end{figure}

For all $\beta_j$, the error caused by the compression algorithm is less than the total numerical error
dominated by discretization, and the theoretical bound is able to fully capture the error caused by the compression. For $\beta_j = 64$, ZFP produces no discernible additional error.  
The maximum truncation-dominated total error is approximately $\approx 2^{-8}$ at $n = 1000$; thus, from Lemma \ref{lemma:truncerror2}, we have $\tilde{\beta}\approx 24$.  Figure \ref{fig:laxwend26} depicts the same plots as before where $\beta_j = 24$ is held constant. One can see that, at time-step $n= 1000$, the theoretical bound is less than the total numerical error, but the actual error caused by the compression is much smaller than both the bound and total numerical error. \red{Unlike the diffusion example, when $\beta = 16$ the total numerical error is less than the ZFP round-off error. However, the use of ZFP compressed data-types still did not destabilize the method and our theoretical bound still bounds the additional error ensuring the method remained stable. }Figure \ref{fig:laxcompression2} represents the compression ratio for varying $\beta$ values. Since the initial condition is a sine wave with periodic boundary conditions, the sine wave propagates as time varies. The shape of the sine wave remains nearly constant and thus the compression ratio remains constant as time varies. 

As ZFP does not produce significant compression ratios in 1D we must consider ZFP in higher dimensions.  Thus, we perform the same numerical example in two dimensions. 
Let $a =1$, $\Delta x_1 = \Delta x_2$ = 1/90, and  $\Delta t = 1/100$. For initial condition $u(x,0) = \cos(2\pi(x_1+x_2))$, we used the two-step Lax-Wendroff scheme \cite{Zwas1972}.  Figure \ref{fig:laxwend2d} displays similar results for the 2D advection example, and similar conclusions to the 1D case can be drawn. The total numerical error from the numerical experiment is $\approx 2^{-10}$ at $n =100.$ From Lemma \ref{lemma:truncerror2}, we need $\tilde{\beta} \geq 26$ for the ZFP round-off error to remain below the total numerical error, which is depicted in Figure \ref{fig:laxwend262d}.  Figure \ref{fig:lax2ddcompression} presents the compression ratio for varying $\beta$ values. As expected, the compression ratios tend to be higher in the 2D case than the 1D case. \red{When $\beta = 16$, round-off error caused by ZFP does not destabilize the method which still converges to a solution. However, since the round-off error is over an order of magnitude greater than the total numerical error, the compression ratio degrades over time, as seen in Figure \ref{fig:lax2ddcompression}.  }
\begin{figure}[h!]
	\centering    
	\begin{subfigure}[b]{0.4\textwidth}
		\includegraphics[width=\textwidth]{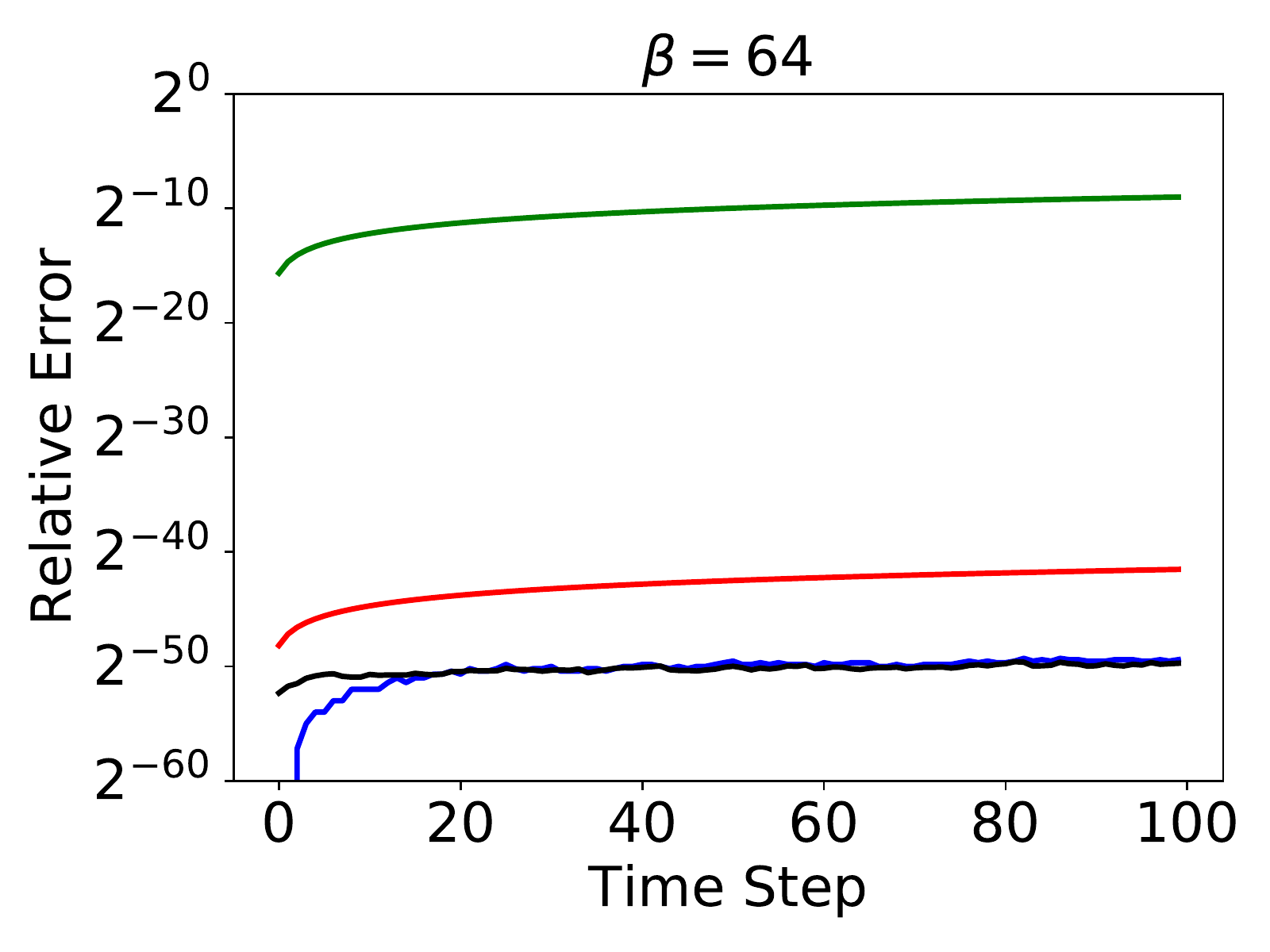}
		\caption{$\beta= 64$}
	\end{subfigure}
	\begin{subfigure}[b]{0.4\textwidth}
		\includegraphics[width=\textwidth]{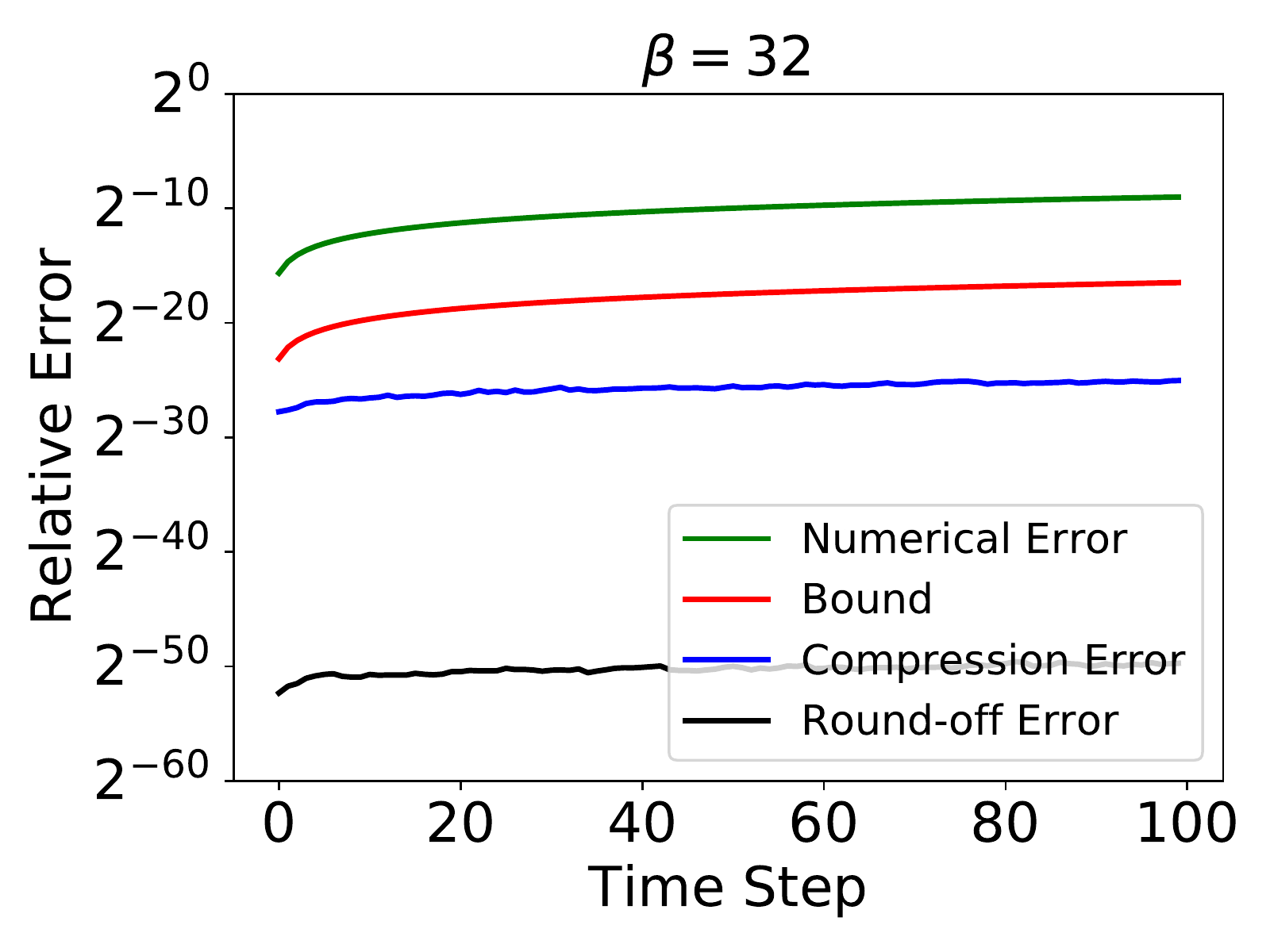}
		\caption{$\beta = 32$}
	\end{subfigure}
	
	\begin{subfigure}[b]{0.4\textwidth}
		\includegraphics[width=\textwidth]{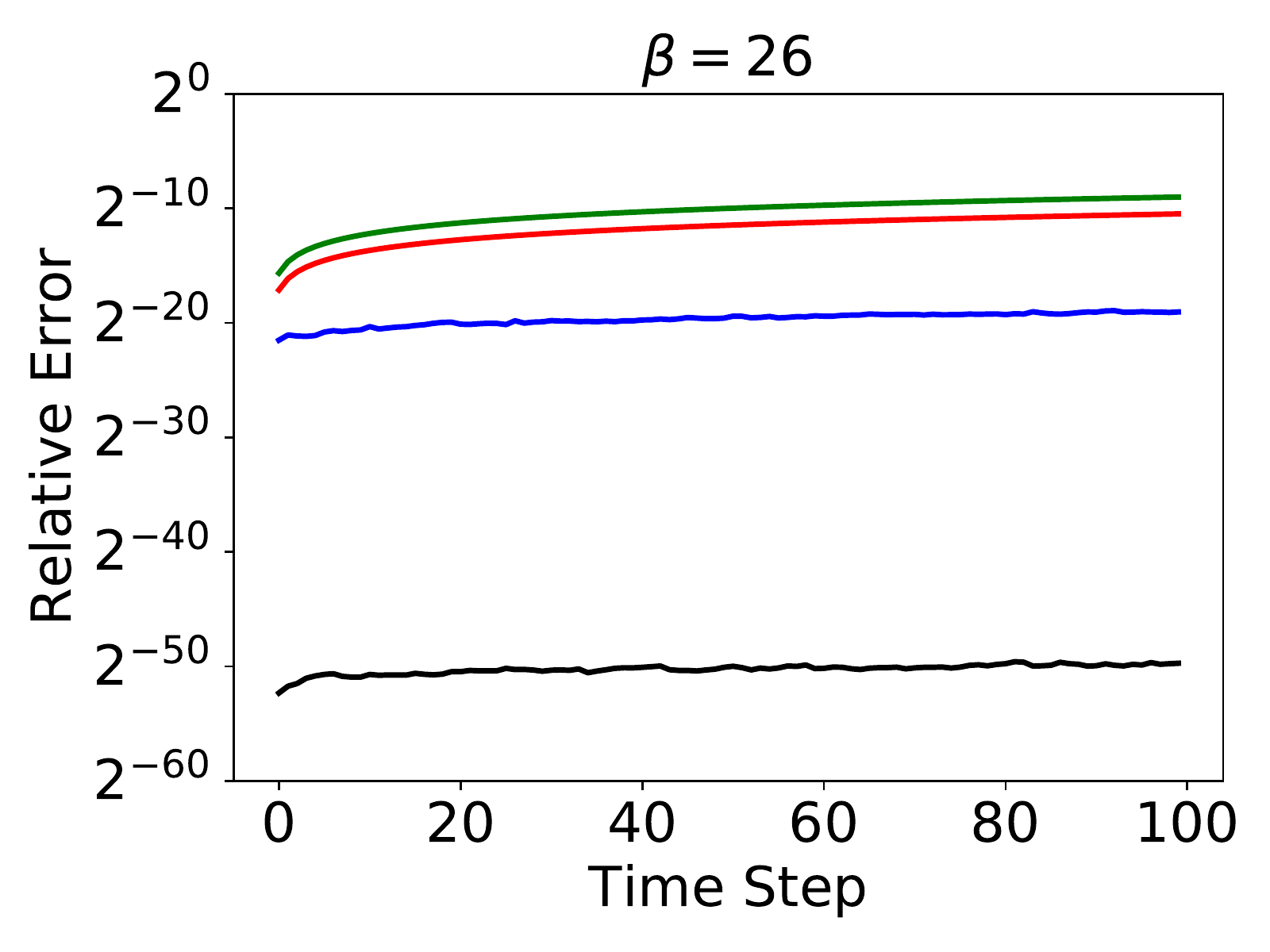}
		\caption{$\beta = 26$}
		\label{fig:laxwend262d}
	\end{subfigure}
	\begin{subfigure}[b]{0.4\textwidth}
		\includegraphics[width=\textwidth]{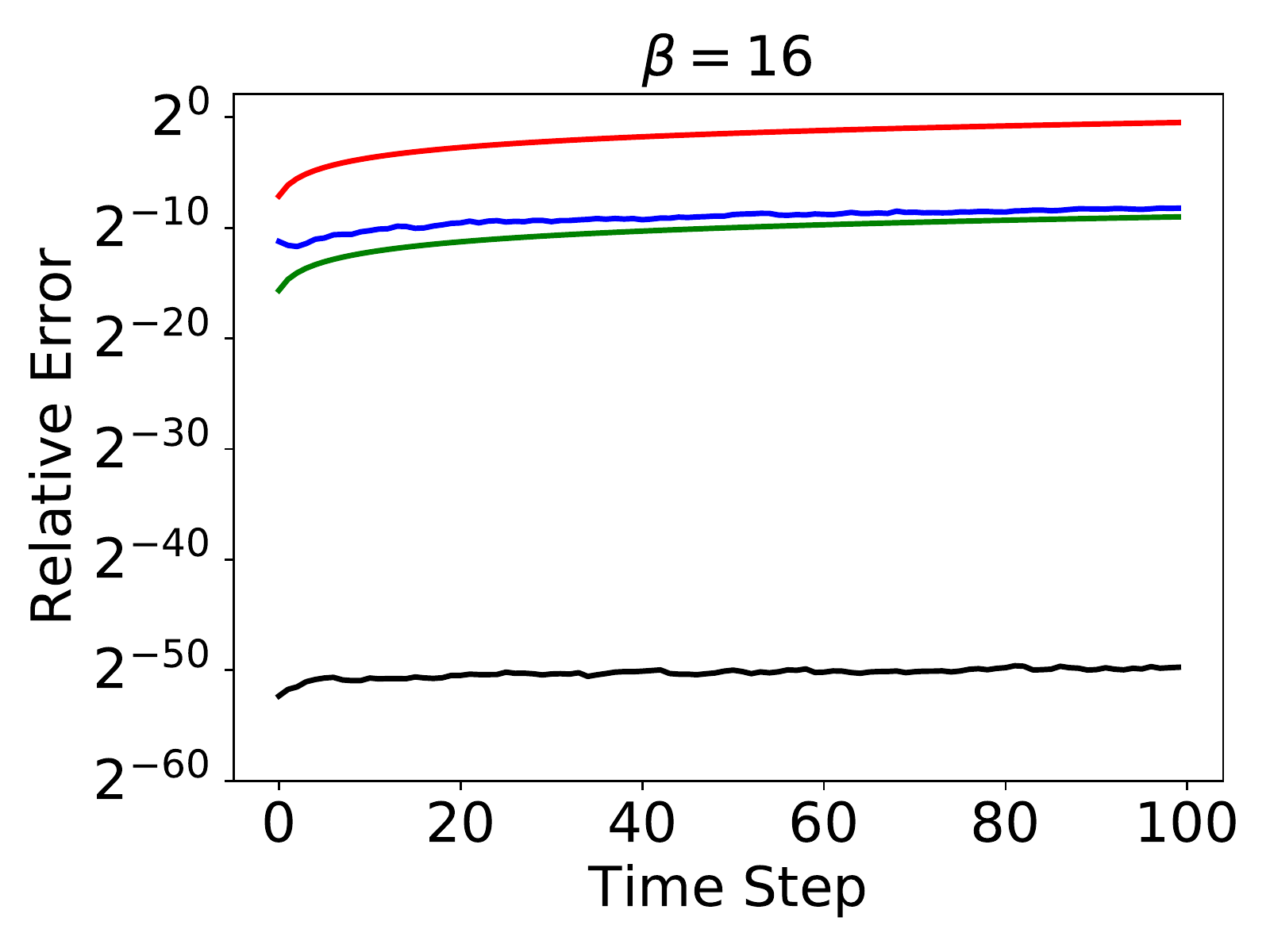}
		\caption{$\beta= 16$}
	\end{subfigure}
	\caption{{2D Lax-Wendroff Example{: The blue line represents the compression error caused by ZFP, the green line represents the truncation-dominated total error, the red line represents the theoretical bound, and the black line represents the IEEE double precision round off error estimated by using a solution calculated in $80$-bit precision.} } }
	\label{fig:laxwend2d}
\end{figure}
\begin{figure}
	\centering
		\begin{subfigure}[b]{0.4\textwidth}
		\includegraphics[width=\textwidth]{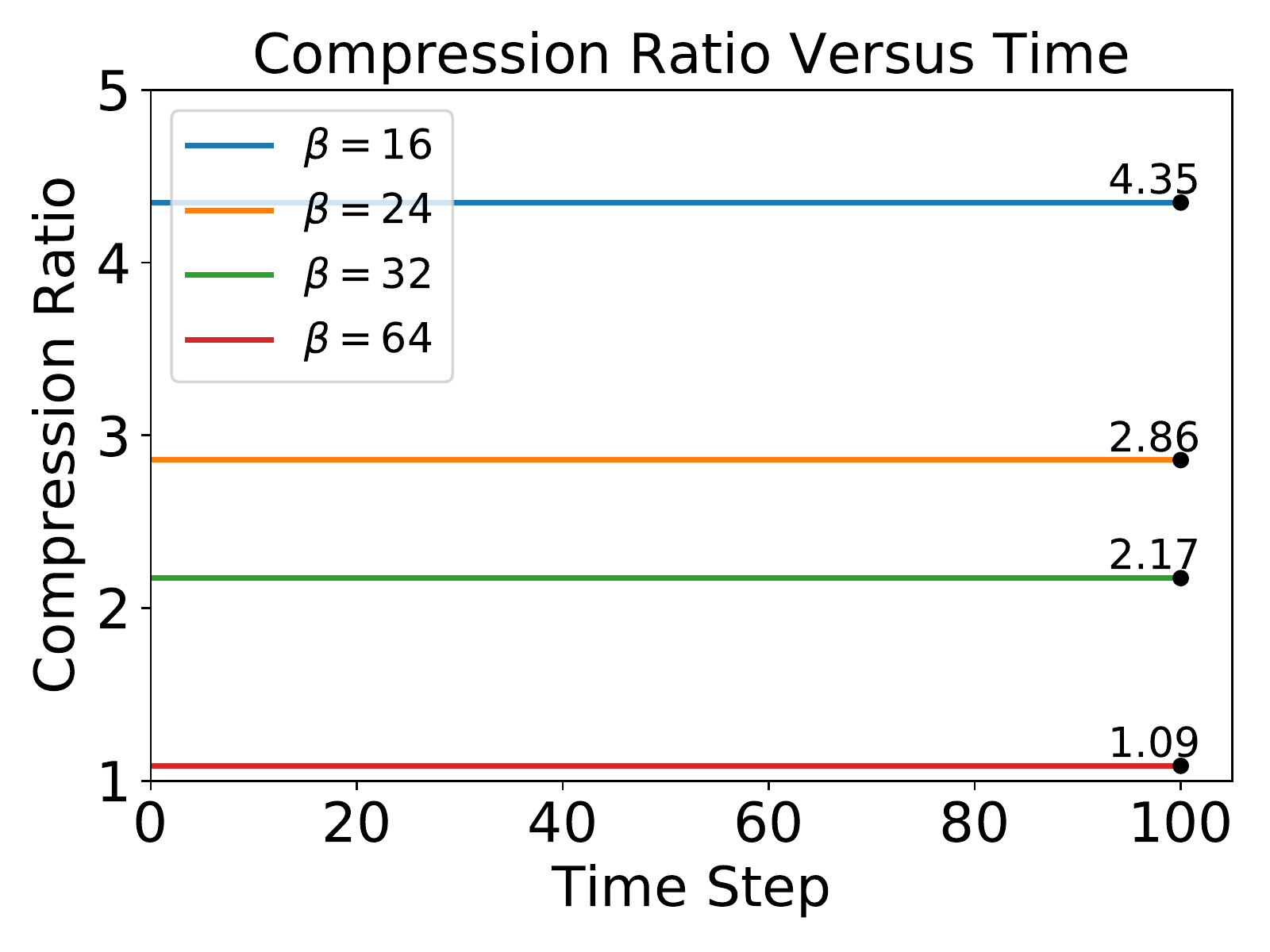}
		\caption{1D Lax-Wendroff Example } 
		\label{fig:laxcompression2}
	\end{subfigure}
		\begin{subfigure}[b]{0.4\textwidth}
		\includegraphics[width=\textwidth]{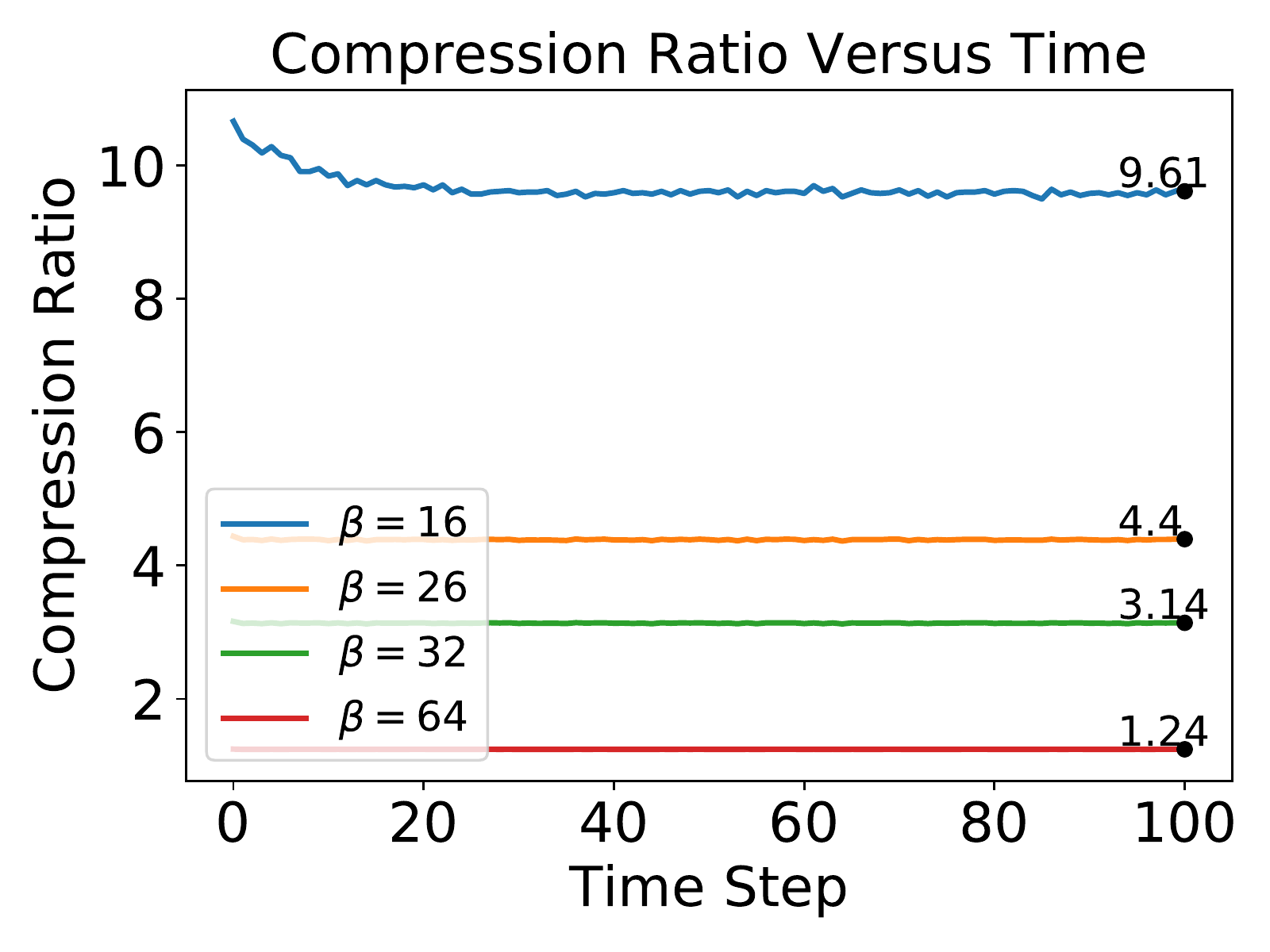}
		\caption{2D Lax-Wendroff Example } 
		\label{fig:lax2ddcompression}
	\end{subfigure}
		\caption{ The compression ratio for varying $\beta = \{64,32,24,16\}$ is displayed over time. (a) 1D Lax-Wendroff Example: (b) 2D Lax-Wendroff Example: The minimum compression ratio is $1.24$ when $\beta = 64$.   } 
\end{figure}

\section{Poisson Example}
In our last example, we have selected the Poisson equation in 2D, which will be solved using the Jacobi method. In this example, we wish to test Theorem \ref{thm:extraits}, as the discrete Poisson equation is a linear system with a fixed point solution and the Jacobi method is a stationary iterative method. Using similar notation as the previous example, let $x,y$ be the spatial variables, $u$ the continuous solution to the Poisson equation, and $\bu$ the uncompressed solution to the discretized Poisson equation. The specific PDE we solve with its initial and boundary conditions, is 
\begin{equation}
\begin{array}{rclcrcl}
&\partial^2_x u(x,y) + \partial^2_y u(x,y)  =  4,  &(x,y) \in [0,1] \times [0,1],& \\
\end{array}
\end{equation}
with initial condition 
\begin{equation}
u(x,y ) = \begin{cases}
1+y^2& x=0  \\
1+x^2& y = 0\\
2+y^2 &x = 1\\ 
2+x^2 & y= 1\\ 
0& \text{otherwise}\\
\end{cases}. 
\end{equation}
Using central differences in $x$ and $y$ directions, the algebraic system is expressed as 
\begin{equation}
\frac{u_{i-1,j}-2u_{i,j}+u_{i+1,j}}{\Delta x_1^2}+ \frac{u_{i,j-1}-2u_{i,j}+u_{i,j+1}}{\Delta x_2^2} = 4. 
\end{equation}
Let $\Delta x_1 = \Delta x_2 $. Using Jacobi iteration, we have 
\begin{equation}
u^{n+1}_{i,j} = \frac{1}{4}\left(u^n_{i-1,j}+u^n_{i+1,j}+u^n_{i,j-1}+u^n_{i,j+1} -4\Delta x_1^2\right). 
\end{equation}
Then the finite difference scheme for Poisson using Jacobi can be written as $\bu^{n+1}= A\bu^n,$ with Lipschitz constant $L_l \approx .9996$ for $\Delta x_1 = 0.01$. Define $\bv^{n+1} =
A({D}{C}(\bv^{n}) )$, where $\bv^{0} = \bu^{0}$.
\begin{figure}
	\centering   
	\begin{subfigure}[b]{0.40\textwidth}
		\includegraphics[width=\textwidth]{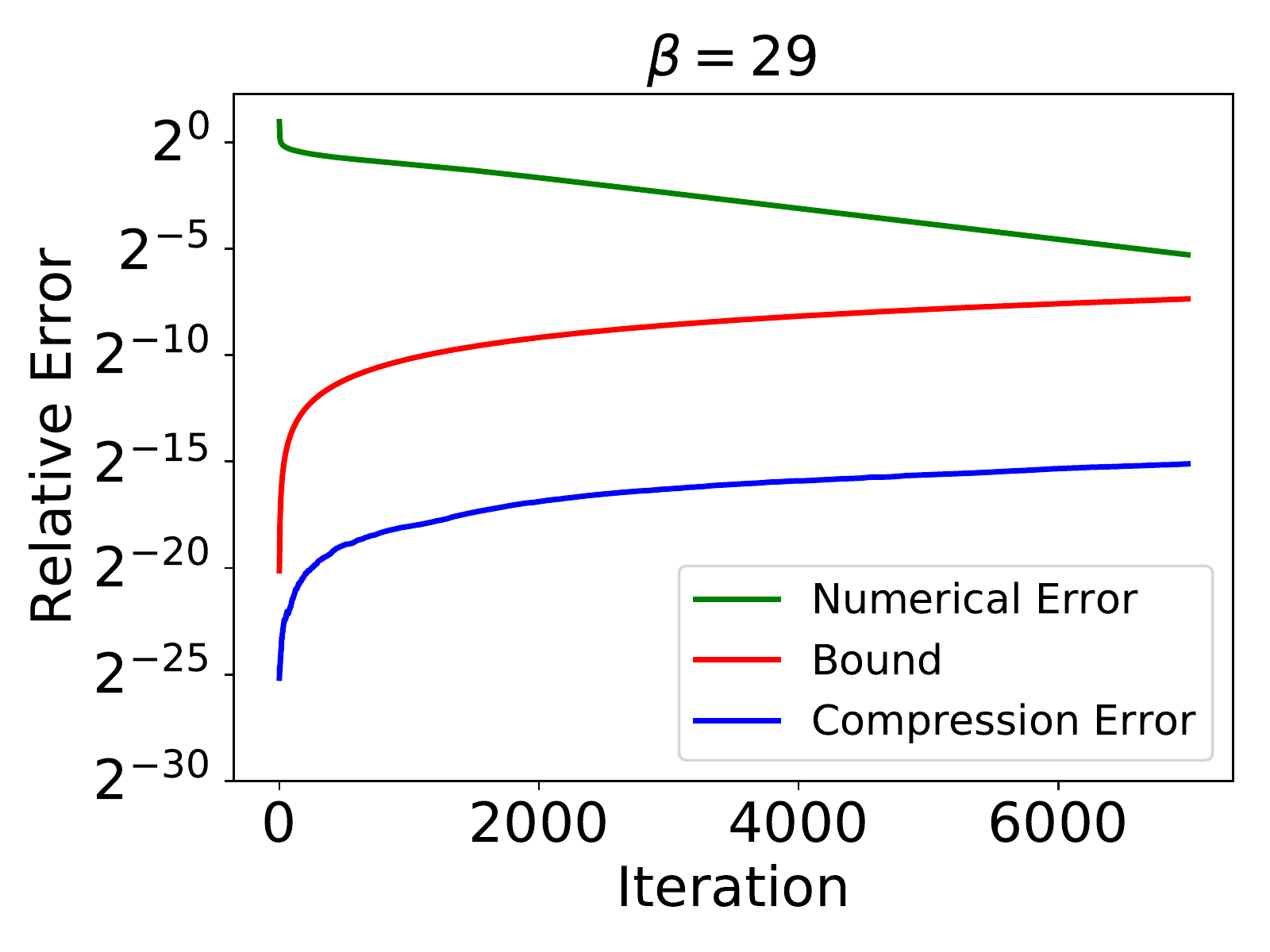}
		\caption{$\beta = 29$}
		\label{fig:poisson}		
	\end{subfigure}
	\begin{subfigure}[b]{0.40\textwidth}
		\includegraphics[width=\textwidth]{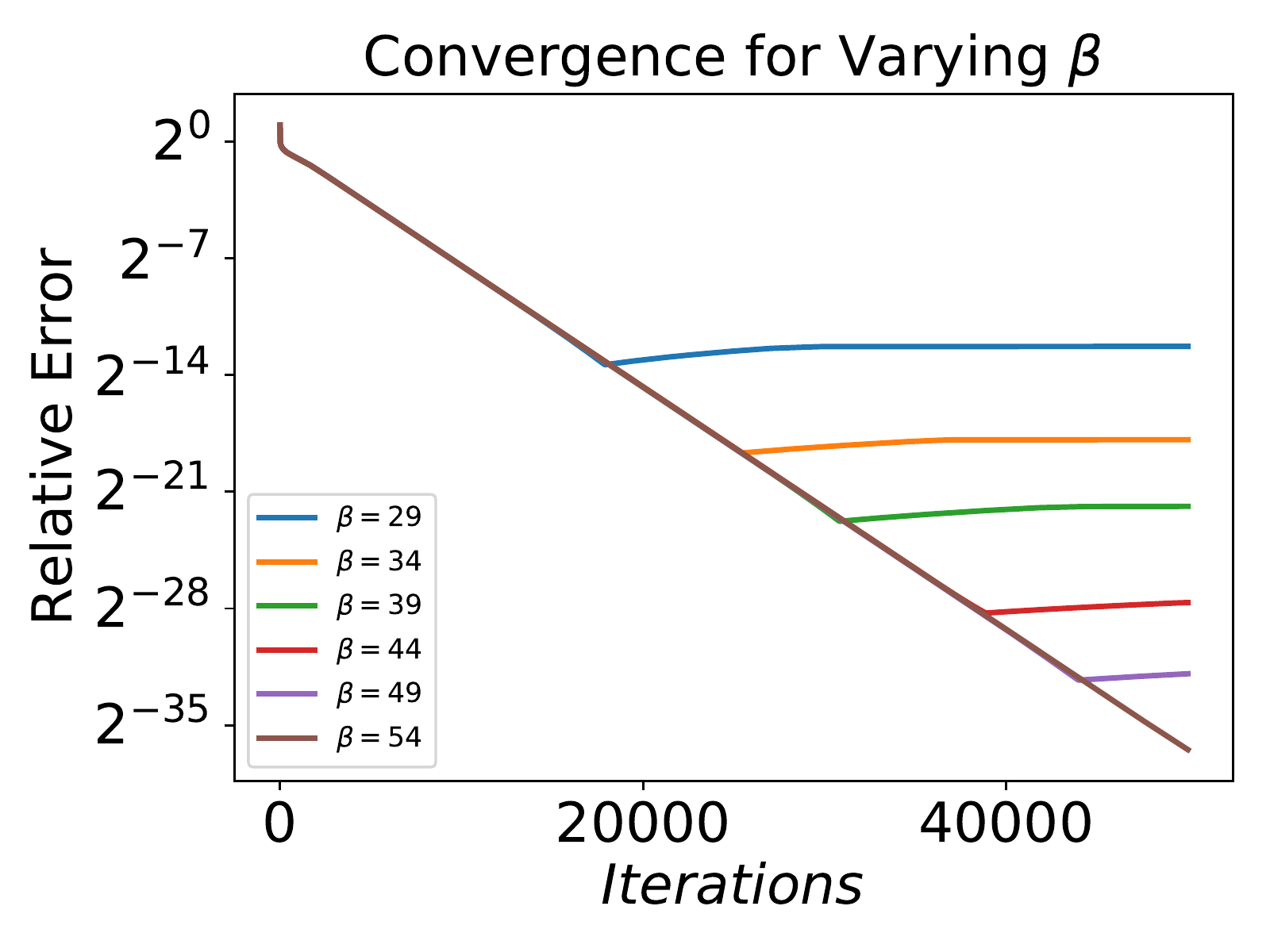}
		\caption{Convergence for varying $\beta$}
		\label{fig:poisson_convergence}		
	\end{subfigure}
	
	\begin{subfigure}[b]{0.40\textwidth}
		\includegraphics[width=\textwidth]{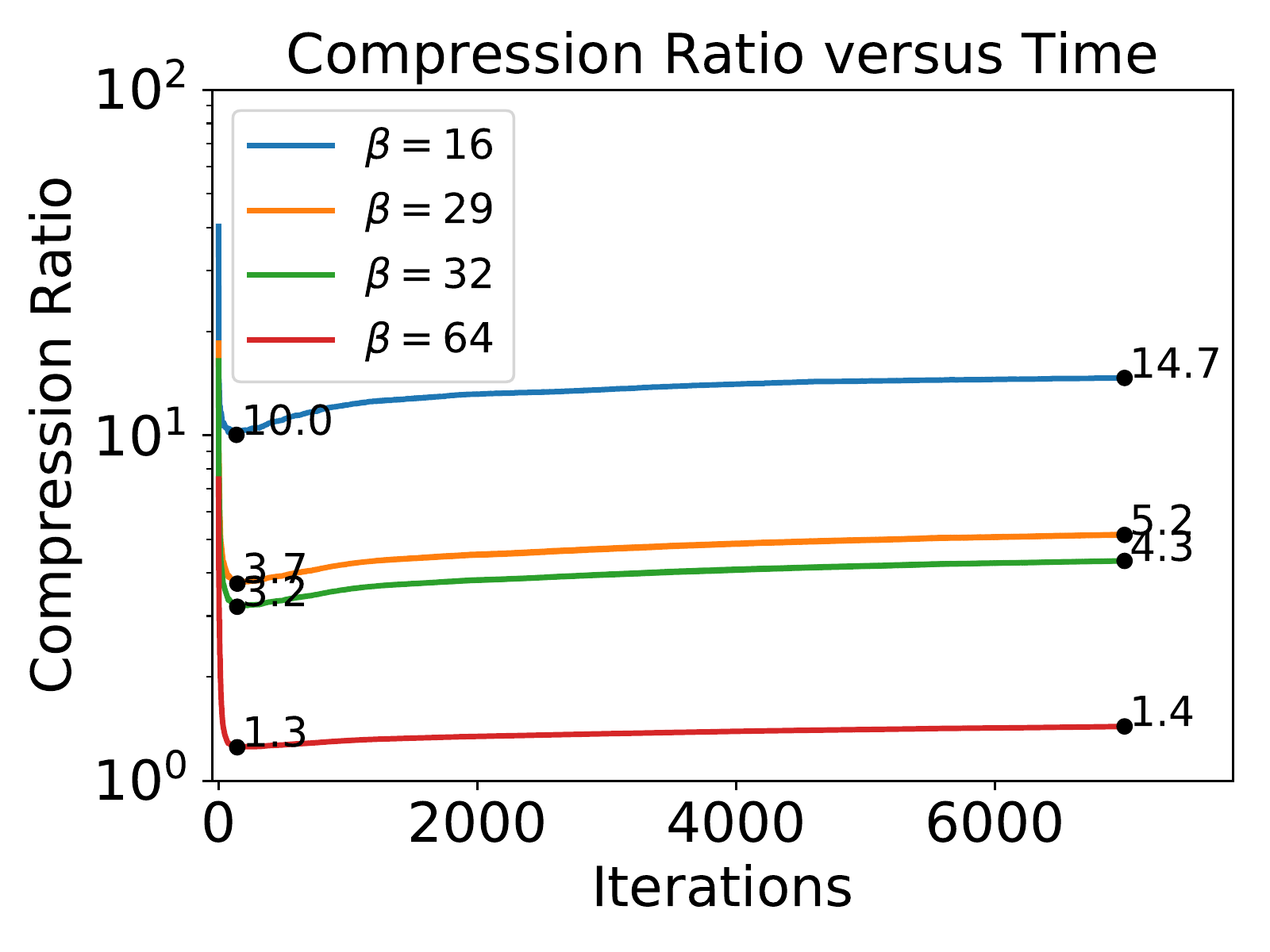}
		\caption{Compression Ratio}
		\label{fig:poissoncompression}
	\end{subfigure}
	\begin{subfigure}[b]{0.40\textwidth}
		\includegraphics[width=\textwidth]{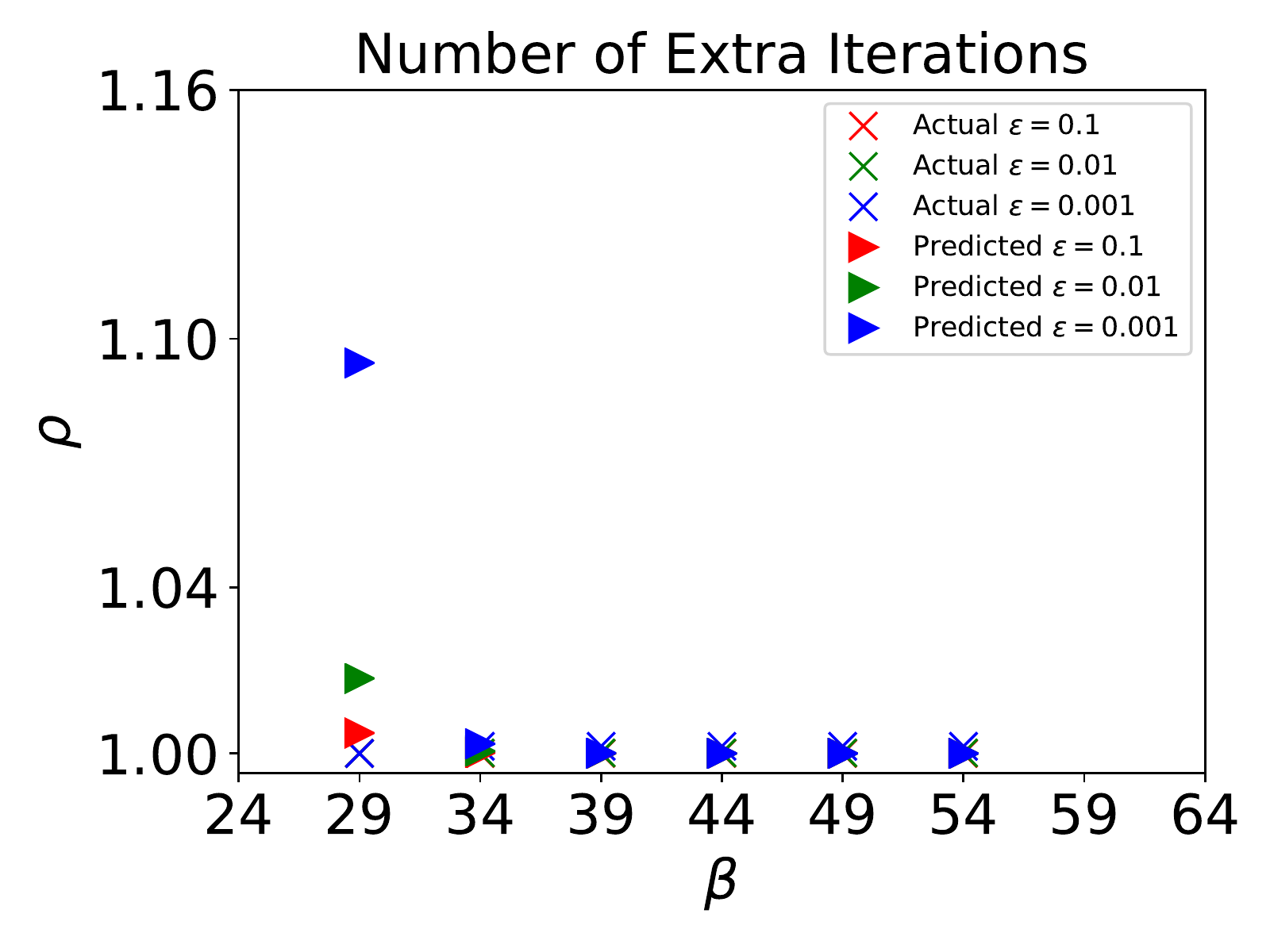}
		\caption{Number of extra iterations
		}
		\label{fig:number extra}
	\end{subfigure}
	
	\centering    
	\caption{Poisson Equation Example{: (a) The blue line represents ZFP
			error, the green line represents the total numerical error, and the red
			line represents the theoretical bound.} (b) \red{The error between the true solution, i.e., $\hat{u}(x,y) = 1+x^2+y^2$, calculated in double precision} and the ZFP solution for varying $\beta$ values. (c) The compression ratio for varying $\beta = \{64,32,29,16\}$ is displayed over time. (d) \red{For a user-defined error tolerance, $\epsilon = \{ 0.1,0.01,0.001\}$, the ratio of the number of extra iterations ($\rho_{\mathit{actual}}$ and $\rho_{\mathit{predicted}}$) provided by Theorem \ref{thm:extraits} is displayed as a function of the fixed precision parameter. From the assumptions of Theorem \ref{thm:extraits}, $\beta$ must be greater than 28.}} 
	
\end{figure}
In this example, the red lines represent the theoretical
bound from Theorem~\ref{ZFPFixedPointThm}. 
\begin{equation}
\sum_{j = 0}^{n} L_l^{n-j+1} K_{\beta_j} \|\bu^{j}\|_{\infty},
\end{equation}
where $\beta_j = 29$ is held constant. \red{In this example, the Lipschitz constant is less than one, thus Theorem \ref{thm:lip} implies Theorem \ref{thm:boundedIterative}.} {Figure \ref{fig:poisson_convergence} displays the convergence of the ZFP solution to the fixed-point,  $\|\bv^{n+1} -\hat{\bu}\|_\infty$, for varying $\beta$ values. For each $\beta$, the convergence stalls once $\|\bv^{n+1} -\hat{\bu}\|_\infty \approx \mathcal{O}\left(\frac{K_\beta}{1-L_l}\right)$. } {Figure \ref{fig:poissoncompression} represents the compression ratio for varying $\beta$ values. Due to the initial conditions, we again see a sharp decrease in the compression ratio at the start of the simulation; then, the ratio slightly increases as $n \geq 144$ increases.}

Since this is a fixed point numerical example, we can study the validity of Theorem \ref{thm:extraits}. For some $\epsilon$, let $n$ and $n_{\mathit{actual}}$ be the index such that $ \|\bu^{n+1} -  \hat{\bu}\|\leq \epsilon$ and $ \|\bv^{n_\mathit{actual}+1} -  \hat{\bu}\|\leq \epsilon$,  respectively. Define the constants
$$\rho_{\mathit{actual}} = \frac{n_{\mathit{actual}}}{n} \qquad {\text{and} } \qquad \rho_{\mathit{predicted}} = \frac{ n +m}{n},$$
where $m = \log_{L_l} \frac{ L_l^{n+1}-C}{1-C}  - (n+1) $ and  $C =\frac{K_{\tilde{\beta}}}{1-L_l}$, as defined in Theorem \ref{thm:extraits}.  For the bound in Theorem \ref{thm:extraits} to be valid, it must be the case that $\rho_{actual}\leq \rho_{\mathit{predicted}}$. Figure \ref{fig:number extra} displays $\rho_{\mathit{actual}}$ and $\rho_{\mathit{predicted}}$ for varying $\epsilon = \{0.1,0.01,0.001\}$, and $\beta$ values. For all $\beta$ values, the actual number of extra iterations, represented by $\rho_{\mathit{actual}}$, is approximately zero. As $\beta$ decreases, an exponential increase occurs for the predicted number of extra iterations, as represented by $\rho_{\mathit{predicted}}$. However, even for $\beta = 29$, $\rho_{\mathit{predicted}} \leq 1.16$ meaning that only $16\%$ extra iterations are required to ensure the error caused by ZFP is less than the theoretical worst case bound for the double precision solution. The required assumption from Theorem \ref{thm:extraits}, that $K_\beta \leq L_l^{n+1}(1-L_l)$, informs us that if $n = 7,000$, then we must use a $\beta \geq 28$ to produce a meaningful result.

\section{Conclusion}
In this paper, we addressed the accumulated round-off error introduced by the use of a compressed array data-type in fixed-point and time-stepping methods. An important contribution of this paper is the extension of the single use round-off error bound that was first formulated in \cite{errorzfp} to bound the accumulated error introduced by ZFP to both time-evolving and fixed-point iterative methods. Under reasonable assumptions on the advancement operators, Theorems \ref{thm:lip} and \ref{thm:boundedIterative} bound the error between the double precision IEEE solution state and the ZFP solution state at some iterate $n$ for general Lipschitz continuous operators and Kreiss bounded linear operators. Lemmas \ref{lemma:truncerror} and \ref{lemma:truncerror2} extend the theorems  to predict the fixed precision parameter, $\beta$, to obtain a certain accuracy over the course of the simulation. Theorem \ref{ZFPFixedPointThm} proves that if the fixed precision parameter is chosen with respect to the Lipschitz constant, then fixed-point methods on ZFP compressed arrays will converge to the same fixed-point as fixed-point methods without ZFP. Theorem \ref{thm:extraits} provides a bound on the number of extra iterations required to obtain the same accuracy as the non-compressed solution. Our results indicated that we can achieve the required precision with an appropriate fixed precision parameter. For comparison purposes, IEEE double precision was used for all the numerical examples presented. It can be seen that when $\beta =64$, the error caused by the repeated use of compression was less than the round-off error caused by IEEE floating-point round-off; however, the compression ratio remained above one, indicating that ZFP solution always used fewer bits than the IEEE double precision solution. 
 To assist in choosing a suitable $\beta$ value, we provided Lemmas \ref{lemma:forwarderrorbeta} and \ref{lemma:backwarderrorbeta}, which state the minimal fixed precision parameter, $\beta$, required to ensure that the round-off error caused by ZFP is less than traditional floating-point arithmetic error. To conclude, we have presented theoretical rationale that the continued investigation and research into ZFP is advantageous to the HPC community.   

We limited our results and analysis to the fixed precision implementation of ZFP. However, the error bounds from \cite{errorzfp} for the fixed accuracy and fixed rate implementation can be used to extend the bounds presented in our paper. Use of new efficient number representations, lossy compression algorithms, and mixed precision algorithms are all potentially beneficial  methods to reduce the memory capacity and bandwidth demands in simulation codes. The key advantage of using ZFP as a number representation over mixed precision algorithms in IEEE is that we can achieve bandwidth reduction without changing the underlying structure of the algorithm, whereas mixed precision algorithms typically require restructuring of the code in order to obtain the same accuracy. In addition, due to the flexibility of precision in ZFP via the fixed precision parameter, mixed-precision algorithms can be easily achieved using ZFP with access to any level of precision desired without the restriction to double, single, and half precision, as in IEEE. 
\section*{Acknowledgments}
This work was funded by LLNL Laboratory Directed Research and
Development as Project 17-SI-004: \emph{Variable Precision Computing}
and was performed under the auspices of the U.S. Department of Energy by
Lawrence Livermore National Laboratory under Contract
DE-AC52-07NA27344.

\bibliographystyle{siamplain}
\bibliography{compression}

\end{document}